\theoremstyle{plain}
\newtheorem{theorem}{Theorem}[section]
\newtheorem*{theorem*}{Theorem}
\newtheorem{lemma}[theorem]{Lemma}
\newtheorem{corollary}[theorem]{Corollary}
\newtheorem{proposition}[theorem]{Proposition}
\newtheorem{mainthm}{Theorem}
\newtheorem{maincor}{Corollary}
\theoremstyle{definition}
\newtheorem{definition}[theorem]{Definition}
\newtheorem{remark}[theorem]{Remark}
\newtheorem{example}[theorem]{Example}
\newtheorem{problem}[theorem]{Problem}
\definecolor{darkblue}{rgb}{0,0,0.7} 
\definecolor{forestgreen}{rgb}{0.07,0.35,0.10} 
\newcommand{\darkblue}{\color{darkblue}} 
\newcommand{\defn}[1]{\emph{\darkblue #1}} 
\newcommand{\A}{\mathbf{A}}
\newcommand{\B}{\mathbf{B}}
\newcommand{\N}{\mathbb{N}}
\newcommand{\R}{\mathbb{R}}
\newcommand{\Red}{\mathcal{R}}
\newcommand{\abel}{\alpha}
\newcommand{\C}{\mathcal{C}}
\newcommand{\F}{\mathcal{F}}
\newcommand{\G}{\mathcal{G}}
\newcommand{\M}{\mathcal{M}}
\newcommand{\PP}{\mathcal{P}}
\newcommand{\Sym}{\mathfrak{S}}
\newcommand{\T}{\mathcal{T}}
\newcommand{\V}{\mathcal{V}}
\newcommand{\ZZ}{\mathcal{Z}}
\newcommand{\ZZbij}{\mathfrak{z}}
\newcommand\wo{w_\circ} 
\DeclareMathOperator{\braid}{braid}
\DeclareMathOperator{\cone}{cone}
\DeclareMathOperator{\comm}{comm}
\DeclareMathOperator{\even}{even}
\DeclareMathOperator{\Gale}{Gale}
\DeclareMathOperator{\Id}{Id}
\DeclareMathOperator{\inv}{inv}
\DeclareMathOperator{\odd}{odd}
\DeclareMathOperator{\rev}{rev}
\DeclareMathOperator{\sign}{sign}
\DeclareMathOperator{\std}{std}
\DeclareMathOperator{\Vand}{Vander}
\DeclareMathAlphabet{\mathdutchcal}{U}{dutchcal}{m}{n}
\newcommand{\Sh}{\mathdutchcal{S}}
\newcommand{\sh}{\mathdutchcal{s}}
\newcommand{\punc}{\scalerel*{\@puncpic}{\ensuremath{\Sigma}}}
\newcommand{\@puncpic}{%
\setlength{\unitlength}{0.28cm}
\begin{picture}(1,1.5)%
\thicklines%
\put(0,0){\line(2,3){1}}%
\put(1,1.5){\line(-1,0){1}}%
\put(0,1.5){\line(2,-3){1}}%
\put(1,0){\line(-1,0){1}}%
\end{picture}%
}
\title[Combinatorial foundations for geometric realizations of subword complexes]{Combinatorial foundations for geometric realizations\\ of subword complexes of Coxeter groups}
\date{\today}
\author[J.-P.~Labb\'e]{Jean-Philippe Labb\'e}
\address[J.-P.~Labb\'e]{Institut f\"ur Mathematik, Freie Universit\"at Berlin, Arnimallee 2, 14195 Berlin, Germany}
\email{labbe@math.fu-berlin.de}
\urladdr{http://page.mi.fu-berlin.de/labbe}
\thanks{This research was supported by the DFG Collaborative Research Center TRR~109 ``Discretization in Geometry and Dynamics''.}
\keywords{Subword Complexes, Coxeter groups, Gale duality, reduced words, sign function, Schur functions, Vandermonde matrix, halving line problem, shortest common supersequence problem}
\subjclass[2010]{Primary 20F55; Secondary 52C40, 05E05, 05E45}
\begin{document}

\begin{abstract}
Multitriangulations, and more generally subword complexes, yield a large family of simplicial complexes that are homeomorphic to spheres.
Until now, all attempts to prove or disprove that they can be realized as convex polytopes faced major obstacles.
In this article, we lay out the foundations of a framework---built upon notions from algebraic combinatorics and discrete geometry---that allows a deeper understanding of geometric realizations of subword complexes of Coxeter groups.
Namely, we describe explicitly a family of chirotopes that encapsulate the necessary information to obtain geometric realizations of subword complexes.
Further, we show that the space of geometric realizations of this family covers that of subword complexes, making this combinatorially defined family into a natural object to study.

The family of chirotopes is described through certain \emph{parameter matrices}. 
That is, given a finite Coxeter group, we present matrices where certain minors have prescribed signs.
Parameter matrices are \emph{universal}: The existence of these matrices combined with conditions in terms of Schur functions is equivalent to the realizability of all subword complexes of this Coxeter group as chirotopes.
Finally, parameter matrices provide extensions of combinatorial identities; for instance, the Vandermonde determinant and the dual Cauchy identity are recovered through suitable choices of parameters.
\end{abstract}

\maketitle

\vspace{-0.75cm}
{
\hypersetup{linkcolor=black}
\tableofcontents
}

\section{Introduction}

Let $d\geq 1$ and $\Delta$ be a simplicial complex homeomorphic to a $(d-1)$-dimensional sphere.
\begin{center}
\emph{Is there a $d$-dimensional simplicial convex polytope $P$ whose face lattice is isomorphic to that of $\Delta$?}
\end{center}
In the affirmative case, the simplicial sphere $\Delta$ is often called \defn{polytopal}.
Steinitz showed that every $2$-dimensional simplicial sphere is polytopal~\cite{steinitz_vorlesungen_1976},\cite[Section~13]{grunbaum_convex_2003}.
In higher dimensions, this question is part of ``Steinitz' Problem'' asking to determine polytopal spheres among all simplicial spheres, see~\cite[Introduction]{kalai_many_1988}\cite[Chapter~3.4]{ewald_combinatorial_1996}.
Though a lot of work has been done towards brute force enumeration of simplicial and polytopal spheres in low dimensions, and spheres with fews vertices, the determination of the polytopality of spheres is famously known to be fraught with pitfalls, see~\cite{firsching_realizability_2017,firsching_complete_2018} and~\cite{brinkmann_fvector_2016,brinkmann_small_2016} to get an overview of the most recent results and discussions on the delicacy of the enumeration.
The determination of the polytopality of a simplicial sphere is known to be an NP-hard problem \cite{mnev_universality_1988,richtergebert_realization_1995,richtergebert_realization_1996}, making progress in this direction continually limited.
Further, techniques aimed at finding combinatorial local conditions are bound to fail in general~\cite{sturmfels_boundary_1987}.
Kalai's ``squeezed spheres'' \cite{kalai_many_1988} and further constructions by Pfeifle and Ziegler~\cite{pfeifle_many_2004} and by Nevo, Santos, and Wilson \cite{nevo_many_2016} show that, for $d\geq 4$, as the number of vertices increases, \emph{most} simplicial $(d-1)$-spheres are \emph{not} polytopal.
Further, polytopal simplicial spheres are also \emph{rare} among geodesic simplicial sphere, i.e. simplicial spheres with a realization on the unit sphere where edges are geodesic arcs~\cite{nevo_many_2016}.
Geodesic simplicial spheres correspond to complete simplicial fans in $\R^{d}$; one obtains the geodesic arcs by intersecting the fan with the unit sphere.

Facing this situation, a potential Ansatz to study Steinitz' problem consists in finding novel flexible polytopal constructions or combinatorial obstructions to polytopality.
One famous example of polytope with many constructions is that of the associahedron~\cite{stasheff_homotopy_1963}.
Notable constructions include the fiber polytope realization~\cite[Chapter~7]{gelfand_discriminants_1994}, the cluster algebra approach~\cite{chapoton_polytopal_2002}, and the simple combinatorial construction using planar binary trees~\cite{loday_realization_2004}.
A myriad of descriptions and variations are possible~\cite{tamari_2012}, making a exhaustive survey a challenging task. 
The fact that the associahedron is related to so many areas of mathematics opens the door to approaches to Steinitz' problem from areas outside of discrete geometry.

Among the several possible extensions of the associahedron, multi-triangulations of a convex $n$-gon offer a particularly broad generalization of the underlying boundary complex of its polar~\cite{pilaud_multitriangulations_2012}.
This extension offers the opportunity to discover new approaches to Steinitz' Problem.
Indeed, given an integer $k\geq 1$, the simplicial complex whose facets are maximal ``$k$-crossing-free'' sets of diagonals of a convex polygon (i.e. multi-triangulations) is conjectured to be the boundary of a convex polytope whose polar would generalize the associahedron.
This conjecture first appeared in the Oberwolfach Book of Abstract, handwritten by Jonsson in 2003~\cite{jonsson_abstract_2003}, which did not subsequently appear in the printed MFO Report~\cite{jonsson_generalized_2003}.
Currently, the only known non-classical polytopal construction of multi-associahedron is for the $2$-triangulations of the $8$-gon~\cite{bokowski_symmetric_2009,ceballos_associahedra_2012,bergeron_fan_2015}.
Further, certain cases are known to be realizable as geodesic spheres~\cite{bergeron_fan_2015,manneville_fan_2018}.

Furthermore, the simplicial complex of multi-triangulations turns out to be an example of subword complexes, a broader family of simplicial complexes related to the Bruhat order of Coxeter groups \cite{knutson_subword_2004,knutson_groebner_2005}.
Introduced in the context of Gr\"obner geometry of Schubert varieties, these simplicial complexes are at the crossroad of a variety of objects via their intrinsic relation with reduced words: cluster algebras~\cite{ceballos_subword_2014}, toric geometry~\cite{escobar_brick_2016}, root polytopes~\cite{escobar_subword_2018}, Hopf algebras~\cite{bergeron_hopf_2017}, totally non-negative matrices \cite{davis_fibers_2019}, among others.

In~\cite{bergeron_fan_2015}, taking advantage of the combinatorics of reduced words, the notions of \emph{sign function} and \emph{signature matrices} are used to lay down salient necessary conditions for the polytopality of subword complexes.
The first step consists in showing the existence of a certain sign function on reduced words, which is then used to formulate sign conditions on minors of matrices to obtain signature matrices.
The existence of such matrices constitutes a step towards realizations of chirotopes, which are abstractions of geometric realizations of subword complexes.
Then, a combinatorial construction of signature matrices is derived and it is possible to prove that they lead to complete simplicial fans for subword complexes of type~$A_3$ and for certain cases in type~$A_4$.
In spite of this progress, the reason \emph{why} the construction works is still rather mysterious.
Optimistically, a notion from the aforementioned areas may be key to determine the polytopality of subword complexes and therefore determine if multi-associahedra exist as convex geometric entities.

However, such a notion leading to a general construction remains to be found, and appears to be difficult to find.
Despite this fact, in this article we do succeed in deriving universal families of chirotopes whose realizability implies the realizability of subword complexes.
Here, we aim to make the point that the study of these new families of chirotopes provides the natural framework for the study of geometric realizations of subword complexes.
To do so, we expose some of the crucial details of the construction in \cite{bergeron_fan_2015} and give them a structural combinatorial description.
In particular, this reveals precisely which combinatorial properties of reduced words are relevant, and that Schur functions lay at the center of geometric realizations of subword complexes.
More specifically, we introduce \emph{parameter tensors} and show their universality for geometric realizations of subword complexes: 
Every realization of any spherical subword complex as a polytope or a simplicial fan delivers a realization of a partial chirotope, called parameter tensor, that depends essentially on data derived from the Coxeter group, and little on the words used to define the subword complexes (see Theorem~\ref{thm:C_univ}).
\begin{theorem*}
For each finite Coxeter group~$W$, there exists an explicit family of chirotopes $\mathfrak{X}$ such that every realization of a subword complex as a chirotope $\chi$ yields a realization of a chirotope in~$\mathfrak{X}$.
\end{theorem*}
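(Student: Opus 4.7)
The plan is to build the family $\mathfrak{X}$ from combinatorial data intrinsic to $W$ and then to show that any chirotope realization $\chi$ of a spherical subword complex $\Delta(Q,\pi)$ canonically produces a realization of some member of $\mathfrak{X}$. First I would recall the sign function $\varepsilon$ on reduced words developed in Bergeron--Labb\'e, together with its compatibility under braid and commutation moves; this is the tool that forces prescribed signs on certain $d\times d$ minors of any matrix encoding such a realization. Using this, I would define $\mathfrak{X}$ as the family of partial chirotopes whose ground set and distinguished bases are indexed by reduced-word data of $W$ (principally by the reduced words of the longest element $\wo$), with the sign of each basis prescribed by $\varepsilon$, and with additional Schur-function relations constraining bases that share large overlaps.

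Next, given a realization of $\Delta(Q,\pi)$ as a complete simplicial fan in $\R^{d}$ (or as the boundary of a simplicial polytope), I would extract a chirotope $\chi_Q$ from the ray configuration via Gale duality and restrict it to the bases indexed by reduced words of $\pi$ arising as complements of facets of $\Delta(Q,\pi)$. The sign of each such value is computable through a Vandermonde-type determinant built from the realization, and this sign coincides with $\varepsilon$ of the corresponding reduced word. Letting $Q$ range over words that contain every reduced word of $\wo$ as a subword, the extracted sign data covers the entire distinguished base set of $\mathfrak{X}$; after verifying the Schur-function sign relations on these bases, the result identifies as a realization of a chirotope in $\mathfrak{X}$.

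The main obstacle will be showing that the extracted data is genuinely $W$-intrinsic, i.e.\ independent of the particular word $Q$ on shared bases and compatible with the higher-order Schur-function constraints that cut out $\mathfrak{X}$. One must verify that the signs of the relevant minors depend only on the reduced-word combinatorics of $W$, and that the chirotope axioms combined with the forced signs propagate to the Schur-type identities (notably the dual Cauchy identity). The delicate balance is to make $\mathfrak{X}$ large enough that \emph{every} realization lands in it, yet tight enough to carry the universality strength claimed in the statement; achieving this balance is where the parameter tensors and Schur functions enter in an essential way.
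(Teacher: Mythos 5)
There is a genuine gap, and it sits exactly at the step your proposal leaves implicit: how a realization of $\Delta_W(p)$ \emph{produces} a realization of a member of $\mathfrak{X}$. You extract the chirotope of the realization via Gale duality and restrict it to the bases indexed by reduced words, but that only recovers the signature data that was already forced by the flip conditions; it does not manufacture a new vector configuration realizing a chirotope in $\mathfrak{X}$. The paper's mechanism is polynomial interpolation: given a Gale transform $\B\in\Gale(\A)$, for each letter $s_j\in S$ and each coordinate $k\in[N]$ one interpolates the values $\B(k,i)$ over the occurrences $i$ of $s_j$ in $p$ by a single polynomial of degree at most $|p|_j-1$, evaluated at increasing points $x_i:=i$. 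This rewrites $\B$, column for column, as a model matrix $\C(p,\PP_\B)\cdot \T$, and the coefficient data $\PP_\B$ \emph{is} the realization of the chirotope in $\mathfrak{X}$. Only after this factorization does the Binet--Cauchy machinery (Theorem~\ref{thm:B_det}) apply, converting each forced sign $\sign\det[\B]_Z=\tau(v)$ into the condition $\sign\big(\sum_{\ZZbij\in\mathfrak{Z}_{\abel_v}}\det[\PP_\B]_{\ZZbij}\,\Sh_{\Lambda_{\ZZbij},\Omega_v}\big)=\punc(v)$. Without the interpolation step, your ``Vandermonde-type determinant built from the realization'' has no meaning: an arbitrary Gale transform is not given in a product form, so there is nothing to factor.

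A second, related error is in the sign bookkeeping. You work with a single sign function $\varepsilon$ (the $T$-sign of Bergeron--Ceballos--Labb\'e), but the sign prescribed on the parameter-tensor side is the \emph{punctual} sign $\punc(v)=\sigma(v)\tau(v)$, where the $S$-sign $\sigma(v)=(-1)^{\inv(v)}$ corrects for the column reordering incurred when passing from minors of the coefficients tensor to minors of $\PP$ (Lemma~\ref{lem:reorder_Z}); this $S$-sign is precisely the new combinatorial ingredient of the paper, and with $\tau$ alone the prescribed signs on $\mathfrak{X}$ come out wrong whenever $\inv(v)$ is odd. Finally, your quantification over $Q$ misreads the statement: you do not need, and cannot assume, a word containing every reduced expression of $\wo$ (that is the NP-complete shortest-common-supersequence regime the paper only discusses as an open direction). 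The theorem is per realization: each chirotopal realization of $\Delta_W(p)$, for whatever $p$, yields a parameter tensor satisfying the sign conditions for exactly those reduced words $v$ of $\wo$ occurring as subwords of $p$ --- the members of $\mathfrak{X}$ are \emph{partial} chirotopes, so no covering of all of $\Red(\wo)$ is required. Your proposed hypothesis of a fan or polytope realization is also stronger than needed; the argument runs from the chirotopal (signature-matrix) conditions alone.
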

This theorem has many ramifications.
First, the combinatorial description of $\mathfrak{X}$ is based upon an extension of a classical notion related to permutations and their parity (being \emph{even} or \emph{odd}).
To this end, we introduce a set of sign functions on words (Definition~\ref{def:sign_fct}).
These sign functions are used to prescribe the signs of minors of the involved parameter tensors (Theorem~\ref{thm:A_Sbraid} and Corollaries~\ref{cor:B_commutations} and \ref{cor:D_sign_of_model}).
Remarkably, the proof of the theorem above uncovers how a mild extension of Schur functions play a central role in the description of the realization space of parameter tensors (Theorem~\ref{thm:B_det}), and at the same time leads to the dual Cauchy identity (Example~\ref{ex:dual_cauchy}).
Finally, the chirotopes in the family $\mathfrak{X}$ are structurally very similar and empirical evidence further suggests that the family $\mathfrak{X}$ may consist of only 1 chirotope, and that it dictates much of the chirotope~$\chi$.

The study of parameter tensors opens the door to a in-depth investigation of the space of geometric realizations of subword complexes that relies on Schur functions and novel combinatorial properties of reduced words.
Completed with the universality result, this framework provides a structured strategy to determine the polytopality of subword complexes and finally establish the difficulty of this problem through its intrication with the ``halving-line problem'' and the ``shortest common supersequence problem'', see Section~\ref{sec:parameter_sign_mat}.

{\bf Outline.} 
We present the necessary notions from linear algebra, algebraic combinatorics and discrete geometry in Section~\ref{sec:prelim}.
In Section~\ref{sec:s_sign}, we present a \emph{theory of sign functions on words} that unifies the usual sign of permutations and the sign function presented in~\cite{bergeron_fan_2015} and prove Theorem~\ref{thm:A_Sbraid}.
In Section~\ref{sec:model_mat}, we define \emph{parameter tensors} and \emph{model matrices} as a tool to factorize the determinant of ``partial alternant matrices'' and thereby prove Theorem~\ref{thm:B_det}.
Finally, in Section~\ref{sec:parameter_sign_mat}, we combine both tools and present the \emph{universality of parameter tensors} in Theorem~\ref{thm:C_univ}.

{\bf Acknowledgements.}
The author would like to express his gratitude to
Federico Castillo,
Cesar Ceballos,
Joseph Doolittle,
Gil Kalai,
Eran Nevo,
Arnau Padrol,
Vincent Pilaud, 
Vic Reiner,
Christophe Reutenauer,
Francisco Santos, 
Rainer Sinn,
and G\"unter M. Ziegler
for several important discussions leading to the results in this article.
The author is grateful to Adriano Garcia, Erza Miller and Richard P. Stanley for enlightening discussions that took place during the ``Conference Algebra and Combinatorics at LaCIM'' for the 50$^{\text{th}}$ anniversary of the {\it Centre de Recherches Math\'ematiques} in September 2018.
Further, the author is thankful to the {\tt Sagemath} community, whose work allowed to create the experimental tools leading to the present results.

\section{Preliminaries}
\label{sec:prelim}
We adopt the following conventions: $\N=\{0,1,2,\dots\}$, $n\in\mathbb{N}\setminus\!\{0\}$, $[n]:=\{1,2,\dots,n\}$, and $\binom{[n]}{k}$ denotes the collection of $k$-elements subsets of $[n]$.
The cardinality of a set $S$ is denoted by~$\# S$.
The symmetric group on~$n$ objects is denoted by $\Sym_n$ or $\Sym_{S}$, where $S$ is a set of cardinality~$n$, and the multiplicative group $(\{\pm 1\},\times)$ is denoted by $\mathbb{Z}_2$.
Vectors and linear functions are denoted using bold letters such as~$\mathbf{e,g,v,x,y}\dots$, and scalars are denoted using normal script such as~$x$.
Tensors are denoted using capitalized calligraphic letters such as $\mathcal{M,T,X}\dots$, and we use normal font such as $M,T,X$ if they should be thought of in some flattened form.
Exponents on tensors designate vector spaces while indices designate dual vector spaces.
For ease of reading, some known combinatorial objects are also denoted using uppercase calligraphic letters although they are not tensors; hopefully the context should help avoid any confusion.
Labeled sets of vectors are denoted by uppercase boldface letters such as $\mathbf{A}$.

\subsection{Multilinear algebra}
Let $d\geq 1$ and $V_d$ be a $d$-dimensional real vector space and denote its dual space by $V_d^*$.
As usual, vectors in $V_d$ are represented as column vectors, while linear functions~$V^*_d$ are represented as row vectors.
We denote the transpose of vectors and of linear functions by $\cdot^{\top}$.
We use Einstein summation convention for tensors, described as follows.
Given a basis $\{\mathbf{e}^1,\dots,\mathbf{e}^d\}$ of $V_d$ and a basis $\{\mathbf{f}_1,\dots,\mathbf{f}_d\}$ of $V^*_d$,  a $(d\times d)$-matrix $M^i{}_{j}=(m_{i,j})=(m_{j}^{i})$ represents the tensor
\[
\M^i{}_{j}:=\sum_{i=1}^{d}\sum_{j=1}^{d}m_j^i \mathbf{e}^i\otimes \mathbf{f}_j\in V_d\otimes V^*_d.
\]
Given a tensor $\T\in (V_d)^a\otimes (V_d^*)^b$, a \emph{row} of $\T$ is given by the restriction of $\T$ to a basis element of $(V_d)^a$, i.e. a row
is indexed by a tuple $(i_1,\dots,i_a)\in [d]^a$.
Similarly, \emph{columns} of $\T$ are obtained by restricting $\T$ to basis elements of $(V_d^*)^b$, and are labeled by tuples in $[d]^b$.
We view the product of $(d_1\times d_2)$-matrices with $(d_2 \times d_3)$-matrices using tensors via the following linear map:
\begin{align}
\left(V_{d_1} \otimes V_{d_2}^*\right) \times \left(V_{d_2} \otimes V_{d_3}^*\right) & \rightarrow V_{d_1} \otimes V_{d_3}^* \nonumber\\
((\mathbf{x}\otimes \mathbf{f}),(\mathbf{y}\otimes \mathbf{g})) & \mapsto \mathbf{f}(\mathbf{y})\cdot (\mathbf{x}\otimes \mathbf{g}) \label{eq:prodtensor}.
\end{align}
More generally, given a tensor $\T^{i}{}_{j}\in V_{d_1}\otimes V_{d_2}^*$ and a tensor $\mathcal{U}^j{}_{k}\in V_{d_2}\otimes V_{d_3}^*$, we write the tensor contraction as
\[
\mathcal{V}^{i}{}_{k}:=\T^{i}{}_{j}\cdot\mathcal{U}^{j}{}_{k},
\]
using the rule given in Equation~\eqref{eq:prodtensor}.
Contraction of higher rank tensors is defined similarly, by matching the appropriate pairs of indices.

\subsection{Vandermonde matrix and Binet--Cauchy Formula}
\label{ssec:vandermonde}

The Vandermonde matrix of size~$d$ is
\[
\Vand(d):=\left(\begin{array}{llll}
1 & 1 & \cdots & 1 \\
x_1 & x_2 & \cdots & x_d \\
x_1^2 & x_2^2 & \cdots & x_d^2 \\
\vdots & \vdots & \ddots & \vdots \\
x_1^{d-1} & x_2^{d-1} & \cdots & x_d^{d-1}
\end{array}\right)=\sum_{i=1}^{d}\sum_{j=1}^d x_j^{i-1}\mathbf{e}^i\otimes \mathbf{f}_j=(x_j^{i-1})_{(i,j)\in[d]\times[d]},
\]
and its determinant is
\begin{align}
\label{eq:vandermonde}
\det \Vand(d)=\prod_{1\leq i<j\leq d} (x_j-x_i)=\sum_{\pi\in\Sym_d}\sign(\pi)x_1^{\pi(1)-1}\cdots x_d^{\pi(d)-1}.
\end{align}
The Vandermonde matrix is also obtained as the product of two rectangular
matrices as follows.
Let $W_d:=\mathcal{W}^i{}_{j,k}:=\bigoplus_{i=1}^{d}\Id_d$ be the augmentation of $d$ identity matrices by concatenating them columnwise.
This matrix can be rewritten as a tensor in $V_d\otimes V^*_d \otimes V^*_d$ as $\sum_{i=1}^d \left(\mathbf{e}^i\otimes \left(\sum_{j=1}^{d}\mathbf{f}_j\right) \otimes \mathbf{f}_i\right)$.
Further, we define the tensor $\mathcal{X}^{k,j}{}_{l}\in V_d\otimes V_d \otimes V^*_d$ as
\[
\mathcal{X}^{k,j}{}_{l}:=\sum_{j=1}^d \left(\sum_{k=1}^{d}x_j^{k-1}\right) \mathbf{e}^k\otimes \mathbf{e}^j \otimes \mathbf{f}_j.
\]
By flattening it, the tensor $\mathcal{X}^{k,j}{}_{l}$ can be written as a matrix as 

\[
\resizebox{!}{3cm}{$
X_d=
\left(\begin{array}{cccc}
\begin{array}{c}
1 \\
x_1 \\
x_1^2 \\
\vdots \\
x_1^{d-1}
\end{array} & 0 & \cdots & 0 \\
0 & \begin{array}{c}
1 \\
x_2 \\
x_2^2 \\
\vdots \\
x_2^{d-1}
\end{array} & \cdots & 0 \\
\vdots &  & \ddots & \vdots \\
0 & \cdots & 0 & \begin{array}{c}
1 \\
x_N \\
x_N^2 \\
\vdots \\
x_N^{d-1}
\end{array}
\end{array}\right).
$}
\]
From the definitions of $W_d$ and $X_d$, and the properties of product of tensors, we get 
\[
\Vand(d)=W_dX_d=\mathcal{W}^i{}_{j,k}\cdot\mathcal{X}^{k,j}{}_{l}.
\]
Indeed, the data is split in order for the product to give back the Vandermonde matrix.

In Section~\ref{ssec:bin_cau_model}, we look at the determinants of variations of the Vandermonde matrices given in this product form.
In order to get hold of the signs of determinants of these variations, we use the Binet--Cauchy formula.
If $M$ is a matrix, we denote by $[M]_Z$ the submatrix of $M$ formed by the rows (or columns) indexed by the set $Z$ ordered as in $M$.
When the order on $Z$ is not determined by $M$, we write the indices in a tuple.

\begin{lemma}[{Binet--Cauchy formula, see \cite[Section~10.5, p.377]{shafarevich_linear_2013} or \cite[Chapter~31]{aigner_proofs_2014}}]
\label{lem:binetcauchy}
If~$P$ is an $(r\times s)$-matrix, $Q$ an $(s\times r)$-matrix, and $r\leq s$, then
\begin{equation}
\det(PQ)=\sum_{Z\in\binom{[s]}{r}}(\det [P]_Z)(\det [Q]_Z),\label{eq:cauchy_binet}
\end{equation}
where $[P]_Z$ is the $(r\times r)$-submatrix of $P$ with column-set $Z$, and $[Q]_Z$ the $(r\times r)$-submatrix of $Q$ with the corresponding row-set $Z$. 
\end{lemma}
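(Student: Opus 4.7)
The plan is to prove the Binet--Cauchy formula by direct expansion via the Leibniz formula and a combinatorial regrouping of terms. First I would write
\[
\det(PQ)=\sum_{\sigma\in\Sym_r}\sign(\sigma)\prod_{i=1}^{r}(PQ)_{i,\sigma(i)},
\]
then expand each entry as $(PQ)_{i,\sigma(i)}=\sum_{k=1}^{s}P_{i,k}Q_{k,\sigma(i)}$. Distributing the product yields a double sum indexed by $\sigma\in\Sym_r$ and arbitrary functions $f\colon[r]\to[s]$, with summand $\sign(\sigma)\prod_{i=1}^{r}P_{i,f(i)}Q_{f(i),\sigma(i)}$.

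The central combinatorial step is to show that non-injective $f$ contribute zero. For such an $f$, fix two indices $i\neq i'$ with $f(i)=f(i')$; the map $\sigma\mapsto\sigma\cdot(i\,i')$ is a sign-reversing involution on $\Sym_r$ that leaves $\prod_{i}P_{i,f(i)}Q_{f(i),\sigma(i)}$ invariant (up to reordering of the factors), so these summands cancel pairwise. Only injective $f\colon[r]\hookrightarrow[s]$ then survive.

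For the surviving terms I would reparametrize each injective $f$ as $f=\iota_Z\circ\tau$, where $Z:=f([r])\in\binom{[s]}{r}$, $\iota_Z\colon[r]\hookrightarrow[s]$ is the order-preserving inclusion with image $Z$, and $\tau\in\Sym_r$. After this substitution, the sum factors into two independent Leibniz-type sums: one over $\tau$ producing $\det[P]_Z$, and one over $\sigma$ (after the change of variable $\sigma\mapsto\tau\sigma$) producing $\det[Q]_Z$. Summing over $Z\in\binom{[s]}{r}$ then gives exactly~\eqref{eq:cauchy_binet}.

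The main technical point is the bookkeeping of signs under the reparametrization. Because $\iota_Z$ is order-preserving, no extra sign appears when extracting the columns of $P$ indexed by $Z$ in their natural order; and the identity $\sign(\sigma)=\sign(\tau)\sign(\tau^{-1}\sigma)$ lets the two determinants decouple cleanly. An alternative and essentially equivalent route, which sidesteps the index manipulation altogether, is to invoke the functoriality of the $r$-th exterior power: writing $\bigwedge^{r}(PQ)=\bigwedge^{r}(P)\circ\bigwedge^{r}(Q)$ as a composition of linear maps factoring through $\bigwedge^{r}\R^{s}$, which has the canonical basis $\{\mathbf{e}_Z:Z\in\binom{[s]}{r}\}$, one reads off the formula directly from the matrix coefficients of these two maps in terms of the minors $\det[P]_Z$ and $\det[Q]_Z$.
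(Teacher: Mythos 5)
Your argument is correct, but note that the paper does not prove this lemma at all: it is quoted verbatim from the literature (Remizov--Shafarevich and Aigner--Ziegler are cited), so there is no in-paper proof to compare against. What you give is the standard direct proof, and all the key points check out. Expanding $(PQ)_{i,\sigma(i)}$ yields a sum over pairs $(\sigma,f)$ with $f\colon[r]\to[s]$; your sign-reversing involution $\sigma\mapsto\sigma\cdot(i\,i')$ for a repeated value $f(i)=f(i')$ does fix the product, since the two $Q$-factors $Q_{f(i),\sigma(i)}$ and $Q_{f(i'),\sigma(i')}$ merely trade their second indices while the $P$-factors are untouched, and it is fixed-point-free, so non-injective $f$ cancel. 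The reparametrization $f=\iota_Z\circ\tau$ is the right device, and the sign bookkeeping works exactly as you say: after reindexing the $Q$-product by $j=\tau(i)$ and substituting $\pi=\sigma\tau^{-1}$, the inner sum over $\pi$ gives $\det[Q]_Z$ (rows $Z$ in natural order, since $\iota_Z$ is order-preserving), leaving $\sum_\tau \sign(\tau)\prod_i P_{i,\iota_Z(\tau(i))}=\det[P]_Z$; one could only wish you had written the substitution $\sigma=\tau\pi$ explicitly rather than gesturing at $\sign(\sigma)=\sign(\tau)\sign(\tau^{-1}\sigma)$, but the identity you invoke is the correct one. Your alternative via $\bigwedge^{r}(PQ)=\bigwedge^{r}(P)\circ\bigwedge^{r}(Q)$ and the basis $\{\mathbf{e}_Z : Z\in\binom{[s]}{r}\}$ of $\bigwedge^{r}\mathbb{R}^{s}$ is the slicker textbook route and is essentially how the cited references frame the result; either version fully establishes the lemma as the paper uses it.
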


In the Vandermonde case, since $W_d$ and $X_d$ enjoy a simple structure, it is easy to reobtain the formula for its determinant:
\[
\det \Vand(d) = \det (W_dX_d) = \sum_{Z\in\binom{[s]}{r}}(\det [W_d]_Z)(\det [X_d]_Z).
\]

\noindent
Let $Z\in\binom{[s]}{r}$.

\medskip
\noindent
$\sqbullet$ The matrix $[W_d]_Z$ either has two equal columns or is a permutation matrix.
Therefore, the determinant $\det[W_d]_Z$ is $0$, $1$, or $-1$. 
It is zero if at least two columns are equal, and equal to the sign of the permutation associated to the permutation matrix otherwise. 

\medskip
\noindent
$\sqbullet$ The determinant of $[X_d]_Z$ is zero if two rows use the same variable $x_i$.
Otherwise, the determinant is a monomial with variables in $\{x_1,\dots,x_d\}$.

\medskip
\noindent
Combining the two conditions for the determinants to be non-zero, we get back the
fact that the determinant of the Vandermonde matrix is the sum of the signed
monomials in exactly $d-1$ variables which have all distinct powers in $\{1,\dots,d-1\}$ as in Equation~\eqref{eq:vandermonde}.

\subsection{Partial Schur functions}
\label{ssec:partial_schur}

It is worth noting that altering $W_d$ and $X_d$ defined above in particular ways
lead to generalizations of the Vandermonde matrix.
For example, Schur functions can be defined this way.

\begin{definition}[{Schur functions via Vandermonde matrices \cite[Section~1.3]{macdonald_symmetric_2015}, \cite[Chapter~7.15]{stanley_enumerative_1999}, and \cite[Section~4.6]{sagan_symmetric_2001}}]
\label{def:schur}
Let $\lambda=(\lambda_1,\lambda_2,\dots,\lambda_d)$ be a partition with $\lambda_1\geq \lambda_2\geq \cdots \geq \lambda_d\geq 0$, and $J$ be an ordered set of indices of cardinality $d$.
The \defn{Schur function}~$\sh_{\lambda,J}$ in the variables $\{x_j : j\in J\}$ is the quotient
\[
\sh_{\lambda,J}:=\frac{\det (x_{j}^{i-1+\lambda_{d-i+1}})_{(i,j)\in[d]\times J}}{\det \Vand_J(d)},
\]
where $\Vand_J(d)$ is the Vandermonde matrix $\Vand(d)$ with variables indexed by $J$.
When $J=[n]$ for some $n\geq1$ we omit the subscript $J$ and simply write $\sh_\lambda$.
\end{definition}

\begin{example}
\label{ex:partition}
\hfill
\begin{enumerate}
\item
Let $d=3$ and consider the partition $(4,1,0)$. 
The Schur function~$\sh_{(4,1,0)}$ is the determinant of the matrix below divided by the Vandermonde determinant $\det \Vand(3)$:

\begin{align*}
\sh_{(4,1,0)} & =\frac{1}{\det \Vand(3)}
\left|\begin{array}{rrr}
x_1^{0+0} & x_2^{0+0} & x_3^{0+0} \\
x_1^{1+1} & x_2^{1+1} & x_3^{1+1} \\
x_1^{2+4} & x_2^{2+4} & x_3^{2+4}
\end{array}\right| \\
& = {\left(x_{1}^{2} + x_{2}^{2} + x_{3}^{2}\right)} {\left(x_{1} + x_{2}\right)} {\left(x_{1} + x_{3}\right)} {\left(x_{2} + x_{3}\right)}.
\end{align*}
The product of the matrices \[
\resizebox{0.9\hsize}{!}{$
W_{(4,1,0)} = \bigoplus_{i=1}^3
\left(\begin{array}{rrrrrrr}
1 & 0 & 0 & 0 & 0 & 0 & 0 \\
0 & 0 & 1 & 0 & 0 & 0 & 0 \\
0 & 0 & 0 & 0 & 0 & 0 & 1
\end{array}\right)
\text{ and }
X_{(4,1,0)}=\sum_{j=1}^3 \left(\sum_{k=1}^{3}x_j^{k-1}\right) \mathbf{e}^k\otimes \mathbf{e}^j \otimes \mathbf{f}_j$}
\]
gives the above matrix. 
\item 
If we set each part in the partition $\lambda$ to $0$, we get back the Vandermonde matrix, which corresponds to the degree $0$ symmetric polynomial $1$.
\end{enumerate}
\end{example}

The following definition is then quite natural.
It is directly used to state Theorem~\ref{thm:B_det} on page~\pageref{thm:B_det}.

\begin{definition}[Partial Schur functions]
Let $m\geq 1$, $P=(p_i)_{i=1}^n$ be an ordered set partition of~$[m]$ where parts may be empty, and $\Lambda=(\lambda^i)_{i=1}^n$ be a sequence of partitions such that the number of parts of $\lambda^i$ is the cardinality of $p_i$.
The \defn{partial Schur function} with respect to $P$ and $\Lambda$ is
\[
\Sh_{\Lambda,P}:=\prod_{i=1}^n\sh_{\lambda^i,p_i}.
\]
This function is symmetric with respect to the action of $\prod_{i=1}^n\Sym_{p_i}$ as a subgroup of $\Sym_m$.
\end{definition}

\begin{example}
\label{ex:partial_schur}
Let $m=4$, $P=(\{1,3\},\{2,4\})$, $\Lambda_1=((0,0),(1,0))$, $\Lambda_2=((1,0),(0,0))$, and $\Lambda_3~=~((3,1),(2,0))$.
The partial Schur functions with respect to $P$ and the sequences of partitions are
\[
\begin{split}
\Sh_{\Lambda_1,P} & = \sh_{(0,0),\{1,3\}}\cdot \sh_{(1,0),\{2,4\}} = 1 \cdot (x_2+x_4), \\
\Sh_{\Lambda_2,P} & = \sh_{(1,0),\{1,3\}}\cdot \sh_{(0,0),\{2,4\}} = (x_1+x_3) \cdot 1, \\
\Sh_{\Lambda_3,P} & = \sh_{(3,1),\{1,3\}}\cdot \sh_{(2,0),\{2,4\}} = x_1x_3(x_1^2+x_1x_3+x_3^2) \cdot (x_2^2+x_2x_4+x_4^2). \\
\end{split}
\]
\end{example}

\subsection{Combinatorics on words}

For basic notions on combinatorics on words and monoids, we refer to the books \cite[Chapter~1]{lothaire_combinatorics_1997} and
\cite[Chapter~1]{diekert_combinatorics_1990}.
Let $S=\{s_1,\dots,s_n\}$ be a finite alphabet of \emph{letters} or \emph{generators}
equipped with the lexicographic order $s_1<s_2<\dots<s_n$.
Let $S^*:=\bigoplus_{i\in\mathbb{N}}S^i$ be the free monoid generated by elements in $S$ by concatenation, and
call its elements \emph{words} or \emph{expressions} and denote by $e$ the
\emph{identity element} or \emph{empty word}.
Given a word $w\in S^m$, it is usually written $w=w_1\cdots w_m$, where $w_i$ denotes its $i$-th letter, and the \emph{length} of $w$ is $m$.
A word $w$ of length $m$ is equivalently defined as a function $w:[m]\rightarrow S$; then $w_i$ represents the image $w(i)\in S$.
When there exists two words $u,v\in S^*$ such that $w=ufv$, the word $f\in S^*$ is called a \emph{factor} of~$w$.
An \emph{occurrence} of a factor $f$ of length~$k$ in a word $w=w_1\cdots w_m$ is a set of positions $\{i,\dots,i+(k-1)\}$ with $i\in[m-k+1]$ such that $w=w_1\cdots w_{i-1}fw_{i+k}\cdots w_{m}$, i.e. $f=w_{i}\cdots w_{i+(k-1)}$.
In particular, a factor of length $1$ gives an occurrence of some letter $s\in S$.
We denote by $\{w\}_i$ the \emph{set of occurrences of the letter} $s_i\in S$ in $w$.
Given a word $w\in S^m$, we hence associate the ordered set partition $\Omega_w = (\{w\}_1,\dots,\{w\}_n)$ of $[m]$ to $w$.
Further we denote the cardinality of the set $\{w\}_i$ by $|w|_i:=\# \{w\}_i$.
Let $k\leq m$ and $w:[m]\rightarrow S$ be a word of length~$m$, a \emph{subword}~$v$ of length~$k$ of $w$ is a word obtained by the composition $v:=
w\circ u$, for some strictly increasing function $u:[k]\rightarrow[m]$.
Therefore, a subword is a concatenation of a sequence of factors.
By extension, we define an occurrence of a subword as the union of the occurrences of the factors whose concatenation give the subword.
Consequently, the same word $v\in S^*$ may give rise to several subword occurrences in a longer word $w$, which are obtained by distinct sequences of factors of~$w$.
Given a subword $v=w\circ u$ of length $k$ of a word~$w$, its \emph{complement word} $w\setminus v$ is the subword of~$w$ obtained by $w\circ u'$ where
$u':[m-k]\rightarrow[m]$ is the increasing function such that~$u'(i)$ is the $i$-th smallest element in $[m]\setminus u([k])$.
Given a word $w=w_1\cdots w_m$, its \emph{reverse word} $\rev(w)$ is the word $w_m\cdots w_1$.

Define the monoid morphism $\abel$ from $S^*$ to the free commutative monoid $\N^S$ by its image on the set
$S$ as follows 
\[
\begin{split}
\abel:S^* & \rightarrow\N^S\\
s\,\,\, & \mapsto \mathds{1}_s(t):=\begin{cases}1, \text{ if } t=s, \\ 0, \text{ else.}\end{cases}
\end{split}
\]
Given an ordering of the alphabet of generators $S$ and a word $w\in S^*$, the image
\[
\abel(w)=(|w|_1,\dots,|w|_n)\]
is a weak composition (i.e. an ordered partition where zeros can appear) called the
\defn{abelian vector} of $w$.
The morphism $\abel$ records the number of occurrences of letters in words, and can be thought of as an ``abelianization'' of the monoid $S^*$.
Hence, the sum of the entries in $\abel(w)$ is the \emph{length} of the word $w$.
To lighten the notation, we shall write $\abel_w$ for the abelian vector $\abel(w)$.

\subsection{Coxeter groups}
\label{ssec:cox_groups}

For basic notions on Coxeter groups, we refer to the books \cite{humphreys_reflection_1990} or \cite{bjoerner_combinatorics_2005}.
Let $(W,S=\{s_1,\dots,s_n\})$ be a \emph{finite irreducible Coxeter system}
with Coxeter matrix $M=(m_{i,j})_{i,j\in[n]}$.
We denote by
$R=\{(s_is_j)^{m_{i,j}} : s_i,s_j\in S\}$ the associated set of \emph{braid relations}. 
Some choices of lexicographic orders are more natural; hence, when influencial, we specify the ordering by indicating $M$ and $R$ explicitly.
The set of braid relations $R$ generates a free submonoid $R^*$ of~$S^*$
and the quotient monoid $S^*/R^*$ consisting of left-cosets of $R^*$ in $S^*$
has a left- and right-inverse and thus forms a group which is isomorphic to~$W$
\cite[p.3]{bjoerner_combinatorics_2005}. 
From this standpoint, the elements of a Coxeter group $W$ are equivalence classes of
expressions and the representative expressions with shortest length are called \emph{reduced}.
Bearing this in mind, we henceforth represent an element $w$ of the monoid
$S^*$ and of the group~$W$ \emph{both} using concatenation of letters.
Whenever a distinction is pertinent we emphasize if the word or its equivalence class is
meant by writing $w\in S^*$, or $w\in W$, respectively.


Throughout this text, we adopt the following notations.
The function $\ell:W\rightarrow\N$ denotes the \emph{length function} sending an element $w\in W$ to the length of its reduced expressions, the symbol~$\wo$ denotes the \emph{longest element} of $W$, and $N:=\ell(\wo)$.
By extension, $\ell$ also denotes the length function sending a word $w$ to its length.
Given an element $w\in W$, we denote the \emph{set of reduced expressions} of $w$ by $\Red(w)$ which is a finite subset of $S^*$.
Solving the following problem would be very helpful for the upcoming sections.

\begin{problem}
\label{prob:abelian}
Let $w\in W$.
\begin{enumerate}
\item Characterize the set of abelian vectors $\{\abel_v :  v\in \Red(w)\}$.
\item Give precise bounds on the number
\[
	\nu(w):=\max\big\{ \max\{\abel_v(s) : s\in
	S\} : v\in\Red(w)\big\},
\]
which is \defn{the maximum number of occurrences of a letter in any reduced expression of $w$}. 
\item Describe the vector
\[
	\mu(w):=\big(\min\{\abel_v(s) : v\in\Red(w)\}\big)_{s\in S},
\]
which gives \defn{the minimum number of occurrences of each letter in any reduced expression of $w$}. 
\end{enumerate}
\end{problem}

The abelian vectors for the reduced words of the longest element of types~$A,B,D$, for small rank and $H$ are gathered in Tables~\ref{tab:abelian_vectors_A} to \ref{tab:abelian_vectors_H} of Appendix~\ref{app:abelian vectors}.
In the symmetric group case $\Sym_{n+1}=A_n$ and taking $\wo$ for $w$, Problem~\ref{prob:abelian}(2) is related to the ``$k$-set problem'' or ``halving line problem'' via duality between points and
pseudolines on the plane, see \cite[Chapter~11]{matousek_lectures_2002}, \cite[Chapter~5]{goodman_handbook_2018}, and \cite[Section~3.1]{pilaud_multitriangulations_2012} for a contextual explanation.
Currently, the best lower bound we know for~$\nu(\wo)$ in this case is $\Omega\left(ne^{\sqrt{\log(4)\log n}}/\sqrt{\log(n)}\right)$~\cite{nivasch_improved_2008}, and the best upper bound is $O(n^{4/3})$~\cite{dey_improved_1998}.
For a nice recent book on related topics, see \cite[Section~3.5]{eppstein_forbidden_2018}.
Further, in type $A$, Problem~\ref{prob:abelian}(3) is answered by
\[
\mu(\wo) = \begin{cases} (1,2,\dots, \lceil\frac{n}{2}\rceil, \dots, 2,1) & \text{if $n$ is odd,} \\ (1,2,\dots, \frac{n}{2},\frac{n}{2},\dots, 2,1) & \text{if $n$ is even.}\end{cases}
\]
Indeed, this can be proved directly using the minimal number of inversions used at each position necessary to obtain the reverse permutation $[n+1,n,\dots,2,1]$.

We let $\nu:=\nu(\wo)$ and refer to this value as the \defn{h\"ochstfrequenz}\label{def:hf} of the group $W$.
As Section~\ref{ssec:BCL_parameter} reveals, the h\"ochstfrequenz of the group $W$ is an important indicator for the genericity of vector configurations that could geometrically realize subword complexes as chirotopes, fans or polytopes.

\subsection{Graphs on reduced expressions}

Given two words $u,v\in S^*$ representing an element $w\in W$, they are
related by a sequence of insertion or deletion of factors contained in $R^*$.
Based on this fact, one defines \emph{braid moves} in a word by replacing
a factor $s_is_js_i\cdots$ of length~$m_{i,j}$ by a factor $s_js_is_j\cdots$ of
length $m_{i,j}$, where $i\neq j$.
It is a well-known property of Coxeter groups that reduced expressions in 
$\Red(w)$ are connected via finite sequences of braid moves, in particular that no reductions
$s_i^2=e$ are necessary \cite{matsumoto_generateurs_1964,tits_probleme_1969} (see \cite[Theorem~3.3]{bjoerner_combinatorics_2005} for a textbook version). 
The graph $\G(w)$ whose vertices are reduced expressions of $w$ and
edges represent braid moves between expressions is hence connected.
Certain minors of $\G(w)$ are of particular interest here.
They are represented in Figure~\ref{fig:minors} using a Hasse diagram of the graph minor containment ordering.
For example, $\G^{\comm}(w)$ is obtained from $\G(w)$ by contracting edges of $\G(w)$ representing braid moves of length 2. 
In these minors, the resulting multiple edges are fusionned into a unique edge.

\begin{figure}[htbp]
\begin{center}
\begin{tikzpicture}

\node (gw)     at (0,3.5)  {$\G(w)$};
\node (gcomm)  at (-1.5,2.5) {$\G^{\comm}(w)$}
edge[<<-,thick] node[midway,label=above left:\small${\{2\}}$,inner sep=0pt] {} (gw);
\node (gbraid) at (1.5,2.5)  {$\G^{\braid}(w)$}
edge[<<-,thick] node[midway,label=above right:\small${\{3\}}$,inner sep=0pt] {} (gw);
\node (godd)   at (-1.5,1) {$\G^{\odd}(w)$}
edge[<<-,thick] node[midway,label=left:\small \{even\},inner sep=0pt] {} (gcomm);
\node (geven)  at (1.5,1)  {$\G^{\even}(w)$}
edge[<<-,thick] node[midway,label=right:\small \{odd\},inner sep=0pt] {} (gbraid);
\node (g2)     at (1.5,-0.5)  {$\G^{2}(w)$}
edge[<<-,thick] node[midway,label=right:\small $\{>2\}$,inner sep=0pt] {} (geven);
\end{tikzpicture}
\end{center}
\caption{Minors of $\G(w)$ obtained by contracting the edges indicated by the arrow labels: for example $\{2\}$ means that all edges labeled by ``2'' (that is commutations) are contracted.}
\label{fig:minors}
\end{figure}
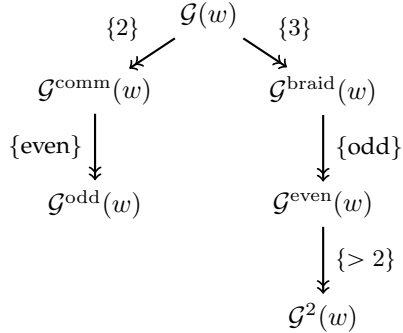

The symmetric group case has received more scrutiny: the vertices of $\G^{\braid}$ are called \defn{braid classes} and the vertices of $\G^{\comm}$ are called \defn{commutation classes}.
Bounds on the number of vertices of $\G(w)$ in terms of the number of vertices of $\G^{\braid}(w)$ and $\G^{\comm}(w)$ have been obtained in \cite{fishel_enumerations_2018}.

\begin{remark}\label{rem:simply_laced}
In the ``simply laced'' cases (types $A_n$, $D_n$, $E_6$, $E_7$, or $E_8$), there are only two types of braid moves, therefore $\G^{\braid}=\G^{\even}$ and $\G^{\comm}=\G^{\odd}$.
\end{remark}

\begin{remark}
In the symmetric group case $A_n$, the graph $\G^{\odd}(\wo)$ is the underlying graph of the Hasse diagram of the higher Bruhat order $B(n+1,2)$, see
\cite{manin_arrangements_1989,ziegler_higher_1993,felsner_theorem_2000}.
It is also studied using rhombic tilings \cite{elnitsky_rhombic_1997}, and is used to study intersections of Schubert cells \cite{shapiro_connected_1997}.
In other finite types, the relation between $\G^{\odd}(\wo)$ and potential higher Bruhat orders remains unclear.
\end{remark}

\noindent
{\bf Convention in Figures.} To lighten figures, we write reduced words such as $s_1s_2s_1s_3s_2s_1$ using the indices of the letters in $S$ to get $121321$. 
To take less space, we denote commutation classes of reduced words with more than one element as $12\{13\}21:=\{121321,123121\}$.
A class denotes by $\{232123\}$ therefore has only one word in it.
We encode the length of braid moves with continuous edges for even lengths, dashed edges for odd lengths, and doubled edges for length $4$ or $5$.

\begin{example}[Dihedral Group $I_2(m)$]
Let $W=I_2(m)$, with $m\geq 2$.
The graph $\G(\wo)$ has two vertices and one edge between them, see Figure~\ref{fig:I2_Gwo}.

\begin{figure}[!ht]
\begin{center}
\begin{tikzpicture}[vertex/.style={inner sep=1pt,circle,draw=black,fill=blue,thick},
	    deux/.style={thick,blue},
	    trois/.style={thick,dashed,red}]

\node at (-2.5,0) {$m$ even:};

\node[vertex,label=left:$(s_1s_2)^{\frac{m}{2}}$] (s1) at (0,0) {};
\node[vertex,label=right:$(s_2s_1)^{\frac{m}{2}}$] (s2) at (1.25,0) {};

\draw[deux] (s1) -- (s2);

\node at (4,0) {$m$ odd:};

\node[vertex,label=left:$(s_1s_2)^{\lfloor\frac{m}{2}\rfloor} s_1$] (t1) at (7,0) {};
\node[vertex,label=right:$(s_2s_1)^{\lfloor\frac{m}{2}\rfloor} s_2$] (t2) at (8.25,0) {};

\draw[trois] (t1) -- (t2);

\end{tikzpicture}
\end{center}
\caption{The graphs $\G(\wo)$ for the dihedral group $I_2(m)$}
\label{fig:I2_Gwo}
\end{figure}
\end{example}

\begin{example}[Symmetric group $\Sym_{4}=A_3$]
\label{ex:red_A3}
Let $W=A_3$ and $S=\{s_1,s_2,s_3\}$, such that
$(s_1s_2)^3=(s_1s_3)^2=(s_2s_3)^3=e$, see Figure~\ref{fig:A3_Gwo}.

\begin{figure}[!ht]
\resizebox{\hsize}{!}{
\begin{tabular}{c}
\begin{tikzpicture}[vertex/.style={inner sep=1pt,circle,draw=black,fill=green,thick},
	    deux/.style={thick,blue},
	    trois/.style={thick,dashed,red}]

\coordinate (center1) at (0,0);
\node at (center1) {$\G(\wo)$};

\node[vertex,label=below:{\small $123121$}] (123121) at (0.75,-0.75) {};
\node[vertex,label=below:{\small $123212$}] (123212) at (2.25,-0.75) {} edge[trois] (123121);
\node[vertex,label=below:{\small $132312$}] (132312) at (3.75,-0.75) {} edge[trois] (123212);
\node[vertex,label=above:{\small $312132$}] (312132) at (3.75,0.75) {};
\node[vertex,label=above:{\small $321232$}] (321232) at (2.25,0.75) {} edge[trois] (312132);
\node[vertex,label=above:{\small $321323$}] (321323) at (0.75,0.75) {} edge[trois] (321232);
\node[vertex,label=above:{\small $323123$}] (323123) at (-0.75,0.75) {} edge[deux] (321323);
\node[vertex,label=above:{\small $232123$}] (232123) at (-2.25,0.75) {} edge[trois] (323123);
\node[vertex,label=above:{\small $231213$}] (231213) at (-3.75,0.75) {} edge[trois] (232123);
\node[vertex,label=below:{\small $213231$}] (213231) at (-3.75,-0.75) {};
\node[vertex,label=below:{\small $212321$}] (212321) at (-2.25,-0.75) {} edge[trois] (213231);
\node[vertex,label=below:{\small $121321$}] (121321) at (-0.75,-0.75) {} edge[trois] (212321) edge[deux] (123121);

\node[vertex,label=left:{\small $312312$}] (312312) at (3,0) {} edge[deux] (132312) edge[deux] (312132);
\node[vertex,label=right:{\small $132132$}] (132132) at (4.5,0) {} edge[deux] (132312) edge[deux] (312132);
\node[vertex,label=right:{\small $213213$}] (213213) at (-3,0) {} edge[deux] (213231) edge[deux] (231213);
\node[vertex,label=left:{\small $231231$}] (231231) at (-4.5,0) {} edge[deux] (213231) edge[deux] (231213);

\node[vertex] (a) at (6.5,0.75) {};
\node[vertex] (b) at (7.5,0.75) {} edge[deux] node[midway,label=above:${m_{ij}=2}$] {} (a);

\node[vertex] (a) at (6.5,-0.75) {};
\node[vertex] (b) at (7.5,-0.75) {} edge[trois] node[midway,label=above:${m_{ij}=3}$] {} (a);

\end{tikzpicture}
\\[0.75em]
\begin{tabular}{c@{\hspace{2cm}}c}
\begin{tikzpicture}[vertex/.style={inner sep=1pt,circle,draw=black,fill=red,thick},
	    deux/.style={thick,blue},
	    trois/.style={thick,dashed,red}]

\coordinate (center1) at (0,-1.75);
\node at (center1) {$\G^2(\wo)=\G^{\braid}(\wo)=\G^{\even}(\wo)$};

\node[vertex,label=below:{\small $\{123121,123212,132312\}$}] (132312) at (2,-0.75) {};
\node[vertex,label=above:{\small $\{321323,321232,312132\}$}] (312132) at (2,0.75) {};
\node[vertex,label=above:{\small $\{231213,232123,323123\}$}] (231213) at (-2,0.75) {} edge[deux] (312132);
\node[vertex,label=below:{\small $\{213231,212321,121321\}$}] (213231) at (-2,-0.75) {} edge[deux] (132312);

\node[vertex,label=below left:{\small $\{312312\}$}] (312312) at (1.25,0) {} edge[deux] (132312) edge[deux] (312132);
\node[vertex,label=right:{\small $\{132132\}$}] (132132) at (2.75,0) {} edge[deux] (132312) edge[deux] (312132);
\node[vertex,label=above right:{\small $\{213213\}$}] (213213) at (-1.25,0) {} edge[deux] (213231) edge[deux] (231213);
\node[vertex,label=left:{\small $\{231231\}$}] (231231) at (-2.75,0) {} edge[deux] (213231) edge[deux] (231213);

\end{tikzpicture}
&
\begin{tikzpicture}[vertex/.style={inner sep=1pt,circle,draw=black,fill=blue,thick},
	    deux/.style={thick,blue},
	    trois/.style={thick,dashed,red}]

\coordinate (center1) at (0,-1.75);
\node at (center1) {$\G^{\comm}(\wo)=\G^{\odd}(\wo)$};

\node[vertex,label=below:{\small $12\{13\}21$}] (123121) at (0,-0.75) {};
\node[vertex,label=below:{\small $\{123212\}$}] (123212) at (1.5,-0.75) {} edge[trois] (123121);
\node[vertex,label=left:{\small $\{13\}2\{13\}2$}] (132312) at (2.5,0) {} edge[trois] (123212);
\node[vertex,label=above:{\small $\{321232\}$}] (321232) at (1.5,0.75) {} edge[trois] (132312);
\node[vertex,label=above:{\small $32\{13\}23$}] (321323) at (0,0.75) {} edge[trois] (321232);
\node[vertex,label=above:{\small $\{232123\}$}] (232123) at (-1.5,0.75) {} edge[trois] (321323);
\node[vertex,label=right:{\small $2\{13\}2\{13\}$}] (231213) at (-2.5,0) {} edge[trois] (232123);
\node[vertex,label=below:{\small $\{212321\}$}] (212321) at (-1.5,-0.75) {} edge[trois] (231213) edge[trois] (123121);

\end{tikzpicture}
\end{tabular}
\end{tabular}
}
\caption{The graphs $\G(\wo)$, $\G^2(\wo)=\G^{\braid}(\wo)=\G^{\even}(\wo)$, and $\G^{\comm}(\wo)=\G^{\odd}(\wo)$ for the symmetric group $\Sym_4=A_3$}
\label{fig:A3_Gwo}
\end{figure}
\end{example}

\begin{example}[The hyperoctahedral group $B_3$]
Let $W=B_3$ and $S=\{s_1,s_2,s_3\}$, such that
$(s_1s_2)^4=(s_1s_3)^2=(s_2s_3)^3=e$.
The commutation classes of $\wo$ are illustrated in Figure~\ref{fig:B3_Gwo}.
Since every odd-length braid moves has length $3$, we get $\G^{\braid}(\wo)=\G^{\even}(\wo)$.
There are $42$ reduced expressions for $\wo$. 
The impatient reader is invited to see Figure~\ref{fig:B3_Ssign} on page \pageref{fig:B3_Ssign} for an illustration.
\end{example}

\begin{figure}[H]
\begin{center}
\resizebox{0.85\hsize}{!}
{
\begin{tikzpicture}[vertex/.style={inner sep=1pt,circle,draw=black,fill=blue,thick},
	    quatre/.style={thick,blue,double,double distance=1mm},
	    trois/.style={thick,dashed,red}]

\coordinate (center1) at (0,-1.5);
\node at (center1) {$\G^{\comm}(\wo)$};

\node[vertex] (a) at (7,0.75) {};
\node[vertex] (b) at (8.25,0.75) {} edge[quatre] node[midway,label=below:${m_{ij}=4}$] {} (a);

\node[vertex] (c) at (7,-0.75) {};
\node[vertex] (d) at (8.25,-0.75) {} edge[trois] node[midway,label=below:${m_{ij}=3}$] {} (c);

\node[vertex,label={[anchor=225]above:{\small $3212\{13\}212$}}] (rb1) at (0.75,-0.75) {};
\node[vertex,label={below:{\small $\{13\}2123212$}}] (rb2) at (2.25,-0.75) {} edge[quatre] (rb1);
\node[vertex,label={below right:{\small $\{13\}2\{13\}2\{13\}2$}}] (rb3) at (3.75,-0.75) {} edge[trois] (rb2);
\node[vertex,label={right:{\small $123212\{13\}2$}}] (r) at (4.5,0) {} edge[trois] (rb3);

\node[vertex,label={[anchor=135]below:{\small $\{121232123\}$}}] (ra1) at (0.75,0.75) {};
\node[vertex,label={above:{\small $12\{13\}2\{13\}23$}}] (ra2) at (2.25,0.75) {} edge[trois] (ra1);
\node[vertex,label={above right:{\small $12\{13\}21232$}}] (ra3) at (3.75,0.75) {} edge[trois] (ra2) edge[quatre] (r);

\node[vertex,label={[anchor=-45]above:{\small $\{321232121\}$}}] (lb1) at (-0.75,-0.75) {} edge[quatre] (rb1);
\node[vertex,label={below:{\small $32\{13\}2\{13\}21$}}] (lb2) at (-2.25,-0.75) {} edge[trois] (lb1);
\node[vertex,label={below left:{\small $23212\{13\}21$}}] (lb3) at (-3.75,-0.75) {} edge[trois] (lb2);
\node[vertex,label={[rotate=0]left:{\small $2\{13\}212321$}}] (l) at (-4.5,0) {} edge[quatre] (lb3);

\node[vertex,label={[anchor=45]below:{\small $212\{13\}2123$}}] (la1) at (-0.75,0.75) {} edge[quatre] (ra1);
\node[vertex,label={above:{\small $2123212\{13\}$}}] (la2) at (-2.25,0.75) {} edge[quatre] (la1);
\node[vertex,label={above left:{\small $2\{13\}2\{13\}2\{13\}$}}] (la3) at (-3.75,0.75) {} edge[trois] (la2) edge[trois] (l);

\end{tikzpicture}
}
\end{center}
\caption{The commutation classes of the longest word of the group $B_3$}
\label{fig:B3_Gwo}

\end{figure}
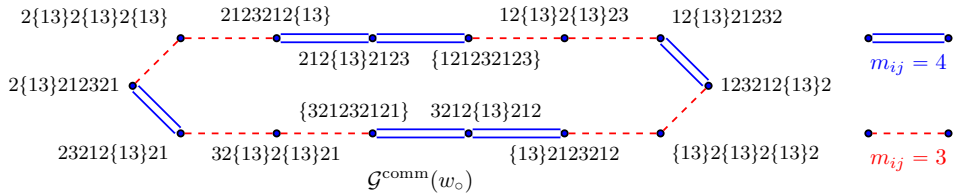

\begin{example}[Icosahedral group $H_3$]
Let $W=H_3$ and $S=\{s_1,s_2,s_3\}$, such that
$(s_1s_2)^5=(s_1s_3)^2=(s_2s_3)^3=e$.
There are $44$ commutation classes in $\mathcal{G}^{comm}(\wo)$; see Figure~\ref{fig:H3_Gwo} for an illustration.

\begin{figure}[!ht]
\resizebox{\hsize}{!}{
\begin{tikzpicture}[vertex/.style={inner sep=1pt,circle,draw=black,fill=blue,thick},
	    cinq/.style={thick,forestgreen,double,dashed,double distance=1mm},
	    trois/.style={thick,dashed,red}]

\def\lgx{1.5}
\def\lgy{0.70710678118}

\coordinate (center1) at (0,0);
\node at (center1) {$\G^{\comm}(\wo)$};

\node[vertex] (a) at (-1*\lgx,-8.5*\lgy) {};
\node[vertex] (b) at (-2*\lgx,-8.5*\lgy) {} edge[cinq] node[midway,label=below:${m_{ij}=5}$] {} (a);

\node[vertex] (c) at (1*\lgx,-8.5*\lgy) {};
\node[vertex] (d) at (2*\lgx,-8.5*\lgy) {} edge[trois] node[midway,label=below:${m_{ij}=3}$] {} (c);

\node[vertex,label={[anchor=north west,rotate=-30]above:{\small $212\{13\}2\{13\}2\{13\}2123$}}] (b0) at (\lgx*0,\lgy*-5) {};
\node[vertex,label={[anchor=north west,rotate=-30]above:{\small $212123212\{13\}2123$}}] (b1) at (\lgx*1,\lgy*-5) {} edge[trois] (b0);
\node[vertex,label={[anchor=north west,rotate=-30]above:{\small $1212\{13\}212\{13\}2123$}}] (b2) at (\lgx*2,\lgy*-5) {} edge[cinq] (b1);
\node[vertex,label={[anchor=north west,rotate=-30]above:{\small $\{121232121232123\}$}}] (b3) at (\lgx*3,\lgy*-5) {} edge[cinq] (b2);

\node[vertex,label={[anchor=north west,rotate=-30]above:{\small $212\{13\}2123212123$}}] (b-1) at (\lgx*-1,\lgy*-5) {} edge[trois] (b0);
\node[vertex,label={[anchor=north west,rotate=-30]above:{\small $212\{13\}212\{13\}212\{13\}$}}] (b-2) at (\lgx*-2,\lgy*-5) {} edge[cinq] (b-1);
\node[vertex,label={[anchor=north west,rotate=-30]above:{\small $2123212123212\{13\}$}}] (b-3) at (\lgx*-3,\lgy*-5) {} edge[cinq] (b-2);

\node[vertex,label={right:{\small $12123212\{13\}2\{13\}23$}}] (br_1r) at (\lgx*3.5,\lgy*-4) {} edge[trois] (b3);
\node[vertex,label={left :{\small $12\{13\}2\{13\}21232123$}}] (br_1l) at (\lgx*2.5,\lgy*-4) {} edge[trois] (b3);
\node[vertex,label={right:{\small $12123212\{13\}21232$}}] (br_2r) at (\lgx*4,\lgy*-3) {} edge[trois] (br_1r);
\node[vertex,label={left :{\small $12\{13\}2\{13\}2\{13\}2\{13\}23$}}] (br_2l) at (\lgx*3,\lgy*-3) {} edge[trois] (br_1l) edge[trois] (br_1r);
\node[vertex,label={right:{\small $12\{13\}2\{13\}2\{13\}21232$}}] (br_3r) at (\lgx*3.5,\lgy*-2) {} edge[trois] (br_2r) edge[trois] (br_2l);
\node[vertex,label={left :{\small $12\{13\}2123212\{13\}23$}}] (br_3l) at (\lgx*2.5,\lgy*-2) {} edge[trois] (br_2l);
\node[vertex,label={right:{\small $12\{13\}21232121232$}}] (br_4) at (\lgx*3,\lgy*-1) {} edge[trois] (br_3l) edge[trois] (br_3r);

\node[vertex,label={right:{\small $2\{13\}2\{13\}2123212\{13\}$}}] (bl_1r) at (\lgx*-2.5,\lgy*-4) {} edge[trois] (b-3);
\node[vertex,label={left :{\small $2123212\{13\}2\{13\}2\{13\}$}}] (bl_1l) at (\lgx*-3.5,\lgy*-4) {} edge[trois] (b-3);
\node[vertex,label={right:{\small $2\{13\}2\{13\}2\{13\}2\{13\}2\{13\}$}}] (bl_2r) at (\lgx*-3,\lgy*-3) {} edge[trois] (bl_1r) edge[trois] (bl_1l);
\node[vertex,label={left :{\small $2123212\{13\}212321$}}] (bl_2l) at (\lgx*-4,\lgy*-3) {} edge[trois] (bl_1l);
\node[vertex,label={right:{\small $2\{13\}2123212\{13\}2\{13\}$}}] (bl_3r) at (\lgx*-2.5,\lgy*-2) {} edge[trois] (bl_2r);
\node[vertex,label={left :{\small $2\{13\}2\{13\}2\{13\}212321$}}] (bl_3l) at (\lgx*-3.5,\lgy*-2) {} edge[trois] (bl_2l) edge[trois] (bl_2r);
\node[vertex,label={left :{\small $2\{13\}212321212321$}}] (bl_4) at (\lgx*-3,\lgy*-1) {} edge[trois] (bl_3l) edge[trois] (bl_3r);

\node[vertex,label={left :{\small $2\{13\}212\{13\}212\{13\}21$}}] (l) at (\lgx*-3,0) {} edge[cinq] (bl_4);
\node[vertex,label={right:{\small $12\{13\}212\{13\}212\{13\}2$}}] (r) at (\lgx*3,0) {} edge[cinq] (br_4);

\node[vertex,label={right:{\small $123212123212\{13\}2$}}] (tr_4) at (\lgx*3,\lgy*1) {} edge[cinq] (r);
\node[vertex,label={right:{\small $123212\{13\}2\{13\}2\{13\}2$}}] (tr_3r) at (\lgx*3.5,\lgy*2) {} edge[trois] (tr_4);
\node[vertex,label={left :{\small $\{13\}2\{13\}2123212\{13\}2$}}] (tr_3l) at (\lgx*2.5,\lgy*2) {} edge[trois] (tr_4);
\node[vertex,label={right:{\small $123212\{13\}2123212$}}] (tr_2r) at (\lgx*4,\lgy*3) {} edge[trois] (tr_3r);
\node[vertex,label={left :{\small $\{13\}2\{13\}2\{13\}2\{13\}2\{13\}2$}}] (tr_2l) at (\lgx*3,\lgy*3) {} edge[trois] (tr_3l) edge[trois] (tr_3r);

\node[vertex,label={right:{\small $\{13\}2\{13\}2\{13\}2123212$}}] (tr_1r) at (\lgx*3.5,\lgy*4) {} edge[trois] (tr_2l) edge[trois] (tr_2r);
\node[vertex,label={left :{\small $\{13\}2123212\{13\}2\{13\}2$}}] (tr_1l) at (\lgx*2.5,\lgy*4) {} edge[trois] (tr_2l);

\node[vertex,label={left :{\small $23212123212\{13\}21$}}] (tl_4)  at (\lgx*-3,\lgy*1)   {} edge[cinq]  (l);
\node[vertex,label={right:{\small $32\{13\}2123212\{13\}21$}}] (tl_3r) at (\lgx*-2.5,\lgy*2) {} edge[trois] (tl_4);
\node[vertex,label={left :{\small $23212\{13\}2\{13\}2\{13\}21$}}] (tl_3l) at (\lgx*-3.5,\lgy*2) {} edge[trois] (tl_4);
\node[vertex,label={right:{\small $32\{13\}2\{13\}2\{13\}2\{13\}21$}}] (tl_2r) at (\lgx*-3,\lgy*3)   {} edge[trois] (tl_3r) edge[trois] (tl_3l);
\node[vertex,label={left :{\small $23212\{13\}21232121$}}] (tl_2l) at (\lgx*-4,\lgy*3)   {} edge[trois] (tl_3l);

\node[vertex,label={right:{\small $32123212\{13\}2\{13\}21$}}] (tl_1r) at (\lgx*-2.5,\lgy*4) {} edge[trois] (tl_2r);
\node[vertex,label={left :{\small $32\{13\}2\{13\}21232121$}}] (tl_1l) at (\lgx*-3.5,\lgy*4) {} edge[trois] (tl_2l) edge[trois] (tl_2r);

\node[vertex,label={[anchor=south west,rotate=30]above:{\small $3212\{13\}2\{13\}2\{13\}212$}}] (t0) at (\lgx*0,\lgy*5) {};
\node[vertex,label={[anchor=south west,rotate=30]above:{\small $3212123212\{13\}212$}}] (t1) at (\lgx*1,\lgy*5) {} edge[trois] (t0);
\node[vertex,label={[anchor=south west,rotate=30]above:{\small $\{13\}212\{13\}212\{13\}212$}}] (t2) at (\lgx*2,\lgy*5) {} edge[cinq] (t1);
\node[vertex,label={[anchor=south west,rotate=30]above:{\small $\{13\}2123212123212$}}] (t3) at (\lgx*3,\lgy*5) {} edge[cinq] (t2) edge[trois] (tr_1r) edge[trois] (tr_1l);

\node[vertex,label={[anchor=south west,rotate=30]above:{\small $3212\{13\}212321212$}}] (t-1) at (\lgx*-1,\lgy*5) {} edge[trois] (t0);
\node[vertex,label={[anchor=south west,rotate=30]above:{\small $3212\{13\}212\{13\}2121$}}] (t-2) at (\lgx*-2,\lgy*5) {} edge[cinq] (t-1);
\node[vertex,label={[anchor=south west,rotate=30]above:{\small $\{321232121232121\}$}}] (t-3) at (\lgx*-3,\lgy*5) {} edge[cinq] (t-2) edge[trois] (tl_1r) edge[trois] (tl_1l);

\end{tikzpicture}
}
\caption{The commutation classes of the longest word of the group $H_3$}
\label{fig:H3_Gwo}

\end{figure}
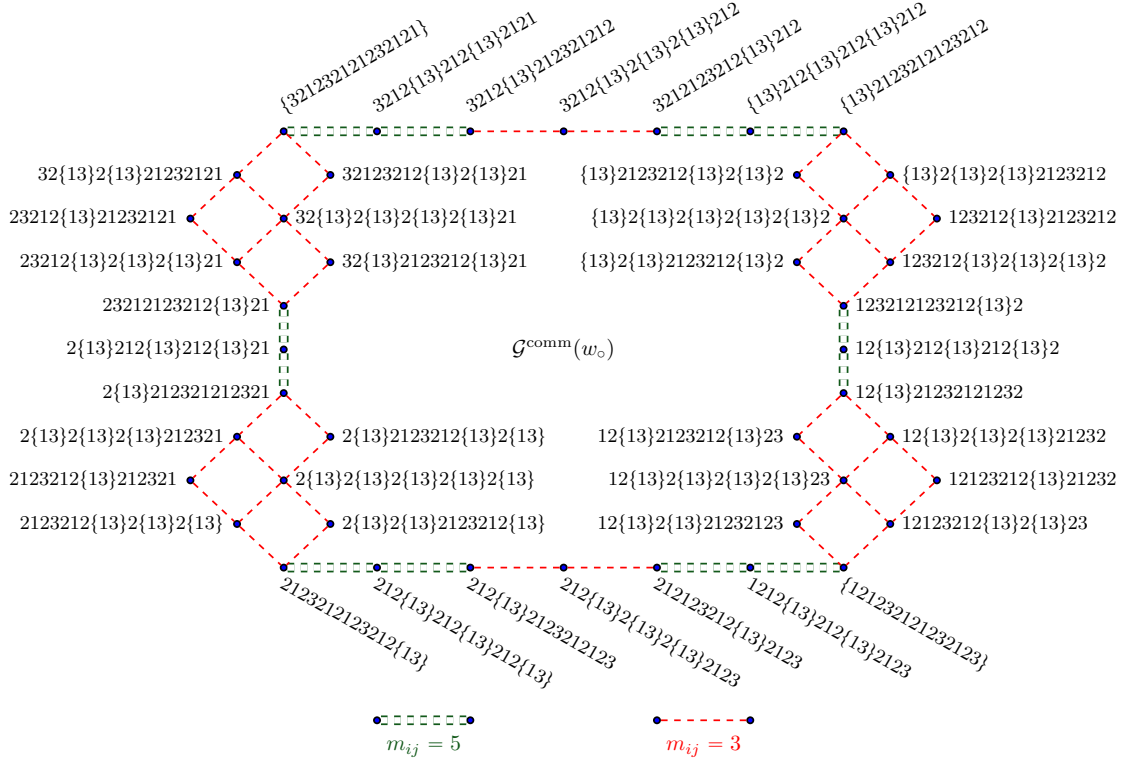
\end{example}

Of particular importance for us is the fact that the graphs $\G(\wo)$, $\G^{\even}(w)$, and $\G^{\odd}(w)$ are bipartite graphs.
The following theorem was proved for finite Coxeter group by Bergeron, Ceballos, and the author using a geometric
argument in \cite{bergeron_fan_2015} and generalized to infinite Coxeter groups
and extended to a finer description by Grinberg and Postnikov in \cite{grinberg_proof_2017} using only
conjugations instead of automorphisms.

\begin{lemma}[{\cite[Theorem~3.1]{bergeron_fan_2015},\cite[Theorem~2.0.3]{grinberg_proof_2017}}]
\label{lem:megabi}
Let $W$ be a Coxeter group and $w\in W$. For any set $\mathcal{E}$ of braid moves
closed under automorphism of $W$, the minor of $\G(w)$ obtained by contracting
edges not contained in $\mathcal{E}$ is a bipartite graph. In particular, $\G(w)$,
$\G^{\even}(w)$, and $\G^{\odd}(w)$ are bipartite graphs.
\end{lemma}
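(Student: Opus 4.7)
The plan is to construct a $\{\pm 1\}$-valued function $\varepsilon_{\mathcal{E}}\colon \Red(w)\to\{\pm 1\}$ that flips under every braid move in $\mathcal{E}$ and is preserved under every braid move not in $\mathcal{E}$; such a function descends to a proper 2-coloring of the contracted minor, proving bipartiteness.

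My first step would be to decompose $\mathcal{E}$ into its orbits $\mathcal{O}_1,\dots,\mathcal{O}_k$ under the natural action of $\operatorname{Aut}(W)$ on braid relations; the closure hypothesis guarantees this is a disjoint union. It then suffices to build, for each orbit $\mathcal{O}$, a sign function $\varepsilon_{\mathcal{O}}$ that flips exactly on braid moves of type~$\mathcal{O}$, and take the product $\varepsilon_{\mathcal{E}} = \prod_i \varepsilon_{\mathcal{O}_i}$, which then has the required behavior.

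To build $\varepsilon_{\mathcal{O}}$, I would fix a base expression $v_0\in\Red(w)$ and tentatively set $\varepsilon_{\mathcal{O}}(v):=(-1)^{n_{\mathcal{O}}(\gamma)}$, where $\gamma$ is any path from $v_0$ to $v$ in $\G(w)$ and $n_{\mathcal{O}}(\gamma)$ counts the number of edges of $\gamma$ labeled by braid moves in $\mathcal{O}$. Well-definedness of $\varepsilon_{\mathcal{O}}$ is equivalent to the assertion that every closed walk in $\G(w)$ contains an even number of edges of type~$\mathcal{O}$. A natural tool to detect these parities, for orbits of a given length $m$, is the sequence of positive roots $\beta_p = s_{i_1}\cdots s_{i_{p-1}}(\alpha_{i_p})$ attached to a reduced expression $v = s_{i_1}\cdots s_{i_N}$: all such sequences order the same multiset (the inversion set of $w$), and a length-$m$ braid move reverses $m$ consecutive entries, contributing the sign $(-1)^{\binom{m}{2}}$; abelian-vector parities and counts of specific factor patterns provide finer invariants that distinguish distinct orbits of the same length.

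The main obstacle is the path-independence statement above. I would establish it by induction on $\ell(w)$, reducing an arbitrary closed walk in $\G(w)$ to a composition of elementary cycles coming from the braid groupoid of $W$ and verifying each elementary cycle separately; the $\operatorname{Aut}(W)$-closure of $\mathcal{O}$ is essential because elementary cycles typically couple braid moves of different relations that are conjugate by a diagram automorphism, and only when $\mathcal{O}$ contains every such conjugate does the parity count cleanly. The approach of Bergeron--Ceballos--Labbé carries this out geometrically via the Coxeter complex for finite~$W$, whereas Grinberg--Postnikov replace the geometric input by an algebraic argument using explicit conjugations of subexpressions, which has the advantage of extending the statement to arbitrary Coxeter groups.
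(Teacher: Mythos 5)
You should first note that the paper itself contains no proof of this lemma: it is imported verbatim from Bergeron--Ceballos--Labb\'e (geometric proof via galleries in the Coxeter complex, finite case) and Grinberg--Postnikov (algebraic proof via conjugations, arbitrary Coxeter groups), so the only available comparison is with those arguments as the paper describes them. Your framing is correct as far as it goes: bipartiteness of the contracted minor is equivalent to a sign function on $\Red(w)$ that flips exactly on $\mathcal{E}$-moves, it suffices to treat one orbit at a time and multiply, and well-definedness of the path-counting sign is equivalent to every closed walk in $\G(w)$ containing an even number of $\mathcal{O}$-edges. But that equivalence is the entire content of the lemma, and your proposal does not establish it.

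The concrete gap is that the invariants you offer provably cannot work in general, and the fallback reduction is unproven. The global sign of the reordering of the inversion sequence flips under a length-$m$ braid move by $(-1)^{\binom{m}{2}}$, so it detects nothing when $m\equiv 0,1 \bmod 4$; and an abelian-vector parity $(-1)^{\sum_i c(i)|v|_i}$ flips under an odd $\{i,j\}$-move precisely when $c(i)\not\equiv c(j) \bmod 2$, so making it flip on all moves of an orbit amounts to properly $2$-coloring the corresponding odd edges of the Coxeter diagram. Take the hyperbolic triangle group with all $m_{ij}=5$: every braid move has length $5$, they form a single orbit, $\binom{5}{2}=10$ is even, and the odd diagram is a $3$-cycle, so neither invariant (nor any product of them) can produce the required sign --- yet the lemma, in Grinberg--Postnikov's generality, covers exactly this case. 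Your remaining route --- induction on $\ell(w)$ reducing closed walks to elementary cycles of the braid groupoid --- silently invokes a nontrivial generation theorem for the cycle space of reduced-word graphs (rank-$3$ Zamolodchikov cycles together with squares of disjoint moves), which you neither state precisely nor prove, and the per-cycle case verification is not carried out. The missing idea, which is the heart of the Grinberg--Postnikov proof, is a \emph{local} invariant: for a fixed pair of inversions of $w$ spanning a common rank-$2$ root subsystem, their relative order in the inversion sequence of a reduced word flips exactly when a braid move reverses that particular subsystem (two distinct roots lie in a unique rank-$2$ span, so no other move can touch the pair); hence along any closed walk each subsystem is reversed an even number of times, and summing these parities over the subsystems attached to a class of moves gives the theorem. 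This also corrects your claim that closure under $\operatorname{Aut}(W)$ is ``essential'': the finer conjugation-closed statement holds, which is precisely the refinement the paper attributes to Grinberg--Postnikov.
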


\subsection{Subword complexes of Coxeter groups}

For each finite Coxeter group $(W,S)$, Knutson and Miller introduced a family of simplicial complexes called subword complexes that are constructed by using the Bruhat order within words in~$S^*$ \cite{knutson_subword_2004,knutson_groebner_2005}.
Subword complexes form one of the tools they used to connect the algebra and combinatorics of Schubert polynomials to the geometry of Schubert varieties.
We present here an adaptation of the original definition, using results from \cite[Section~3]{ceballos_subword_2014} and \cite[Section~2]{bergeron_fan_2015}, that particularly suits our purposes.
Given a word $p\in S^*$, we can order the occurrences of all its subwords by set-inclusion to obtain a Boolean lattice $(2^{[\ell(p)]},\subseteq)$ and subword complexes are certain lower ideals of such Boolean lattices determined using reduced words. 
More precisely, let $(W,S)$ be a finite Coxeter group, $\wo$ be its longest element and $p\in S^m$.
The \emph{subword complex}~$\Delta_W(p)$ is the simplicial complex on the set $[m]$ whose facets are complements of occurrences of reduced words for $\wo$ in the word~$p$~\cite[Definition~2.1]{knutson_subword_2004}.

\begin{remark}
The original definition allows to use reduced words for any element $\pi\in W$ instead of the longest element $\wo$. 
By \cite[Theorem~3.7]{ceballos_subword_2014}, every spherical subword complex $\Delta_W(p,\pi)$ is isomorphic to~$\Delta_W(p')$ for some word~$p'$.
\end{remark}

Subword complexes possess a particularly nice combinatorial and topological structure: They are vertex-decomposable and homeomorphic to sphere or balls \cite[Theorem~2.5 and~3.7]{knutson_subword_2004}.
Knutson and Miller originally asked whether spherical subword complexes can be realized as the boundary of a convex polytope \cite[Question~6.4]{knutson_subword_2004}.
So far, the realized subword complexes include famous polytopes: simplices, even-dimensional cyclic polytopes, polar dual of generalized associahedra, see \cite[Section~6]{ceballos_subword_2014} for a survey on the related conjectures and the references therein.
Subword complexes of type~$A$ are intimately related to multi-triangulations, a generalization of usual triangulations of a convex polygon \cite{jonsson_abstract_2003,pilaud_multitriangulations_2012}.
The only ``non-classical'' instance of multi-associahedron which is realized is a $6$-dimensional polytope with $12$ vertices realizing the simplicial complex of $2$-triangulations of the $8$-gon, see \cite{bokowski_symmetric_2009,ceballos_associahedra_2012,bergeron_fan_2015}, which is a type $A_3$ subword complex.
Further, fan realizations for type $A_3$ and two cases in $A_4$ spherical subword complexes have been provided \cite{bergeron_fan_2015} and for $2$-triangulations (type~$A$) with rank $5,6,7$ and $8$ \cite{manneville_fan_2018}.

Due to their combinatorial provenance, we can attribute a combinatorial type to each facet.

\begin{definition}[Combinatorial type and abelian vector of facet]
Let $\Delta_W(p)$ be a subword complex.
The \defn{combinatorial type} of a facet $f$ of $\Delta_W(p)$ is the complement subword $p\setminus f$ of $f$ in~$p$.
Two facets are \defn{combinatorially equivalent} when their combinatorial types are the same.
The \defn{abelian vector} of a facet~$f$ is the abelian vector $\abel_{p\setminus f}$ of its combinatorial type.
\end{definition}

\begin{example}
Let $W=A_2$ and $p=p_1p_2p_3p_4p_5=s_1s_2s_1s_2s_2$. The subword complex $\Delta_{A_2}(p)$ has two combinatorial types of facets. 
The facets $\{1,4\}$ and $\{1,5\}$ have type $s_2s_1s_2$ and the facet $\{4,5\}$ has type $s_1s_2s_1$. 
The abelian vector of $\{1,4\}$ and $\{1,5\}$ is $(1,2)$ and the abelian vector of $\{4,5\}$ is $(2,1)$.
The letters $p_2$ and $p_3$ are contained in every occurrences of reduced word for $\wo$ in $p$, so $2$ and $3$ are non-vertices of the subword complex $\Delta_{A_2}(p)$, see Figure~\ref{fig:sw_complex}.

\begin{figure}[!ht]
\begin{center}
\resizebox{!}{2cm}{
\begin{tikzpicture}[vertex/.style={inner sep=1pt,circle,draw=black,fill=blue,thick},
	    nonvertex/.style={inner sep=1pt,cross out,draw=black,fill=blue,thick},
	    edge/.style={thick,blue,}]

\node[vertex,label=right:$1$] (1) at (0:1) {};
\node[vertex,label=left:$4$] (4) at (120:1) {} edge[edge] (1);
\node[vertex,label=left:$5$] (5) at (240:1) {} edge[edge] (4) edge[edge] (1);

\node[nonvertex] (2) at (3,-1) {2};
\node[nonvertex] (3) at (3,1) {3};
\end{tikzpicture}
}
\end{center}
\caption{A small subword complex with two non-vertices}
\label{fig:sw_complex}
\end{figure}
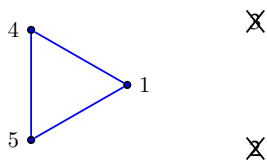
\end{example}

\subsection{Fans, Gale duality and chirotopes}

In this section, we describe a general technique to realize simplicial spheres as the boundary of convex polytopes.
We rely on the reference books \cite[Section~5.4]{grunbaum_convex_2003}, \cite[Chapter~6]{ziegler_lectures_1995} and \cite[Section 2.5, 4.1, and 5.4]{de_loera_triangulations_2010} for the notions used here.
We assume the reader's familiarity with the elementary objects from convex geometry apart from the following essential notions, that we recall below.

Given a label set $J$ of cardinality $m$, a \emph{vector configuration} in $\R^d$ is a set $\A:=\{{\bf p}_j~|~j\in~J\}$ of labeled vectors ${\bf p}_j\in\R^d$~\cite[Definition~2.5.1]{de_loera_triangulations_2010}.
The \emph{rank} of $\A$ is its rank as a set of vectors.
We assume vector configurations to have maximal rank, i.e. $r=d$, and we write them as matrices in $\R^{d\times m}$; observe that in order to do so we implicitly assume that $J$ is totally ordered.
We denote the non-negative span of a set of vectors labeled by a subset $C\subseteq J$ by $\cone_\A(C)$.
A vector configuration is \emph{acyclic} if there is a linear function that is positive in all the elements of the configuration. 
It is \emph{totally cyclic} if $\cone_\A(J)$ is equal to the vector space spanned by $\A$.

Let $\A$ be a vector configuration in $\R^d$ consisting of $m$ labeled vectors.
A \emph{fan} supported by $\A$ is a family $\F=\{K_1,K_2,\dots,K_k\}$ of non-empty polyhedral cones generated by vectors in $\A$ such that every non-empty face of a cone in $\F$ is also a cone in $\F$, and the intersection of any two cones in~$\F$ is a face of both, see e.g. \cite[Section~7.1]{ziegler_lectures_1995}.
A fan $\F$ is \emph{simplicial} if every $K\in\F$ is a simplicial cone, that is if every cone is generated by linearly independent vectors, and it is \emph{complete} if the union $K_1\cup \dots \cup K_k$ is~$\R^d$.
The $1$-dimensional cones of a fan are called \emph{rays}.
In a complete fan, inclusion-maximal cones are called \emph{facets}, and $(d-1)$-dimensional cones are called \emph{ridges}.

The simplicial complex $\Delta_\F$ on $[m]$ whose faces are index sets of cones in a complete simplicial fan $\F$ is homeomorphic to a sphere.
In this case, we say that $\F$ gives a \defn{complete simplicial fan realization} of the simplicial complex $\Delta_\F$.
Such simplicial complexes are also called \defn{geodesic spheres}, since the intersection of $\F$ with the unit sphere is a geodesic triangulation of it.
Other names include ``star-shaped'' or ``fan-like'', see e.g. \cite[Part~I, Chapter~3, Definition~5.4 and 5.6]{ewald_combinatorial_1996}.
To each vector configuration correspond a dual object called Gale transform.

\begin{definition}[{Gale transform \cite[Definition~4.1.35]{de_loera_triangulations_2010} \cite[Section~6.4]{ziegler_lectures_1995}}]
Let $\A\in\R^{d\times m}$ be a rank $d$ vector configuration with $m$ elements.
A \defn{Gale transform} $\B\in\R^{(m-d)\times m}$ of $\A$ is a vector configuration of rank $m-r$ whose rowspan equals the right-kernel of $\A$.
We denote by $\Gale(\A)$ the set of all Gale transforms of $\A$.
\end{definition}

Let $\A\in\R^{d\times m}$ be a vector configuration supporting a complete simplicial fan $\F$.
Since $\A$ is totally cyclic, Gale duality implies that its Gale transforms are acyclic.
Furthermore, Gale duality makes an independent subset $C\subseteq J$ in the primal $\A$ correspond to a spanning subset $J\setminus C$ in the dual
$\B$, and vice-versa.
Full-dimensional cones in $\F$ are spanning and independent in $\A$ and hence are complements of independent and spanning subconfigurations of any $\B\in\Gale(\A)$.
Given such a full-dimensional cone in $\F$ spanned by a set of vectors $C\subseteq J$, its \emph{dual simplex}~$C^*$ is $J\setminus C$.
The cone generated by $C^*$ in $\B\in\Gale(\A)$ is called a \emph{dual simplicial cone} of~$C$ and is denoted by $\cone_\B(C^*)$.
Following \cite[Section 9.5]{de_loera_triangulations_2010}, to obtain a \defn{realization of a simplicial sphere} $\Delta$ \defn{as the boundary of a convex simplicial polytope} one possibility is to proceed in two steps:

\begin{enumerate}
\item[(T)] Obtain a vector configuration $\A$ supporting a complete simplicial fan $\F$ whose geodesic sphere $\Delta_\F$ has a face lattice isomorphic to the face lattice of the simplicial sphere $\Delta$, (Triangulation) and
\item[(R)] prove that the underlying triangulation of the space by~$\Delta_\F$ is regular. Equivalently, find one point on each ray, so that
taking the convex hull of these points yields a simplicial polytope whose boundary complex is isomorphic to $\Delta$, see
\cite[Corollary~9.5.3]{de_loera_triangulations_2010}. (Regularity)
\end{enumerate}

\noindent
The first step relies heavily on the combinatorial structure of the sphere, whereas the success of the second step relies heavily on the geometry of the obtained simplicial fan.

To complete step~(T), we need to find a vector configuration $\A$ with a triangulation $\Delta$, whose facial structure dictates orientations of sets of vectors.
In particular, every cone in $\F$ spanned by exactly $d$ vectors in $\A$ gets a sign in $\{-1,0,+1\}$.
This information is encoded via (abstract) chirotopes.

\begin{definition}[{Chirotope, see e.g. \cite[Theorem~3.6.2]{bjoerner_oriented_1999}}]
Let $J$ be a finite totally ordered label set and $r$ be a positive integer.
A \defn{chirotope} of rank $r$ is a map
$
\chi: J^r \rightarrow \{-1,0,1\}
$
such that
\begin{enumerate}
\item $\chi$ is an alternating function, i.e. $\chi\circ w = \sign(w)\chi$ for all permutations $w\in\Sym_J$.
\item the set $\{B\in J^r~:~\chi(B)\neq 0\}$ is non-empty and is the set of basis of a matroid of rank $r$, the \defn{underlying matroid} of $\chi$.
\item for any $\gamma\in J^{r-2}$ and $a,b,c,d\in J$, if
\[
\chi(a,d,\gamma)\cdot\chi(b,c,\gamma)\geq 0
\qquad\text{and}\qquad
\chi(c,d,\gamma)\cdot\chi(a,b,\gamma)\geq 0,
\]
then $\chi(a,c,\gamma)\cdot\chi(b,d,\gamma)\geq0$. (Pl\"ucker--Grassmann relations)
\end{enumerate}
\end{definition}

\begin{definition}[{Chirotope of a vector configuration \cite[Definition~8.1.1]{de_loera_triangulations_2010}}]
Let $\A$ be a vector configuration in~$\R^d$.
The \defn{chirotope $\chi_\A$ of rank} $r:=d$ \defn{associated to} $\A$ is the map
$$
\begin{array}{rrcl}
\chi_\A:& J^{d}             & \rightarrow & \{-1,0,1\}\\
& (j_1,\dots,j_{d}) & \mapsto     & \sign \det [\A]_{(j_1,\dots,j_{d})}.
\end{array}
$$
\end{definition}

In oriented matroid terms, chirotopes of vector configurations are also referred to as \emph{realizable chirotopes}.
In turn, realizable chirotopes form an equivalent encoding of \emph{realizable oriented matroids}.
The vector configuration $\A$ gives a realization of the underlying chirotope $\chi_\A$.
We recall the following lemma that gives conditions to form a complete simplicial fan and fulfill step (T).
The lemma has the peculiarity that the first two conditions are expressed using a Gale dual, which is practical when considering subword complexes.

\begin{lemma}[{see \cite[Lemma 3]{bergeron_fan_2015}}]
\label{lem:simpl_fan}
Let $\Delta$ be a simplicial complex on $J=[m]$ homeomorphic to a sphere of dimension $d-1$.
A totally cyclic vector configuration $\A\in\R^{d\times m}$ supports a complete simplicial fan realization of $\Delta$ if and only if the following conditions on $\A$ and a Gale transform $\B\in\Gale(\A)$ are satisfied.
\begin{itemize}
\item[(B)] Dual simplicial cones $\cone_\B(C^*)$ are independent in $\R^{m-r}$. (Basis)
\item[(F)] If $I$ and $J$ are two facets of $\Delta$ intersecting along a ridge, then the interior of the corresponding dual simplicial cones intersect. (Flip)
\item[(I)] There is a cone in $\A$ whose interior is not intersected by any other cones. (Injectivity)
\end{itemize}
\end{lemma}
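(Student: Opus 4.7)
The plan is to prove the equivalence in two steps. Necessity of each of (B), (F), (I) follows from the standard Gale-duality dictionary together with the definition of a fan. Sufficiency is a bootstrap argument: one anchors the construction at the cone provided by (I) and propagates the fan property around the facet-ridge graph of $\Delta$ via (B) and (F).

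For necessity, suppose $\A$ supports a complete simplicial fan $\F$ realizing $\Delta$. Condition (B) is immediate: if the facet $C$ yields a simplicial (hence independent) cone in $\F$, then independence of $[\A]_C$ in $\R^d$ is equivalent, under Gale duality, to independence of $[\B]_{C^*}$ in $\R^{m-d}$. For (F), two facets $I,J$ of $\Delta$ sharing a ridge $R = I \cap J$ must, in a fan, lie on opposite sides of $\aff(\cone_\A(R))$; this amounts to the existence of a positive circuit of $\A$ supported on $I \cup J$, which dually provides an explicit common interior point in $\cone_\B(I^*) \cap \cone_\B(J^*)$. Condition (I) is the defining property of a fan: any facet cone in $\F$ has its interior disjoint from every other cone.

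For sufficiency, set $\F := \{\cone_\A(C) : C \in \Delta\}$. Condition (B) ensures each top-dimensional candidate cone is simplicial. The plan is to prove that $\F$ is a fan and that $\bigcup \F = \R^d$ by induction along the facet-ridge graph of $\Delta$, starting from the anchor cone $K_0$ provided by (I). Since $\Delta$ is a triangulated sphere, this graph is connected. The inductive step proceeds as follows: if an already-placed facet $I$ correctly sits in the partial fan, and $J$ is an adjacent facet across a ridge $R$, then condition (F) combined with the Pl\"ucker--Grassmann relations satisfied by the maximal minors of $\A$ forces $\cone_\A(I)$ and $\cone_\A(J)$ to meet exactly along $\cone_\A(R)$ and to lie on opposite sides of $\aff(\cone_\A(R))$. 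Propagating this around the entire graph yields the global fan structure. Completeness then follows from the total cyclicity of $\A$ together with the fact that the anchor cone $K_0$ is covered with multiplicity exactly one by (I), a property preserved by each flip across a ridge.

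The main obstacle is the globalization step: in principle, a configuration satisfying (B) and (F) at every ridge could still fail to be a fan by ``winding'' around $\R^d$ more than once. Condition (I) plays precisely this role, pinning the covering multiplicity at one via the anchor cone and thereby promoting the local flip data into an honest complete simplicial fan rather than a multi-sheeted cover.
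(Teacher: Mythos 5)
A point of reference first: the paper does not prove this lemma at all --- it imports it verbatim from \cite[Lemma~3]{bergeron_fan_2015}, where the sufficiency direction rests on the standard covering-multiplicity (degree) argument for simplicial pseudomanifolds, in the spirit of the local-to-global criteria for triangulations in \cite{de_loera_triangulations_2010}. Your necessity direction is essentially correct, up to one slip: the dependence supported on $I\cup J$ is not a \emph{positive} circuit, since its coefficients on the ridge $R=I\cap J$ may have either sign. What condition (F) dualizes to is the separating covector $y$ vanishing on the $\mathbf{a}_i$, $i\in R$, and taking opposite signs on the two private vertices; since $\ker\B$ equals the row space of $\A$, this covector is a dependence of $\B$ supported off $R$, and it is this object (not a circuit of $\A$) that produces the common interior point of $\cone_\B(I^*)$ and $\cone_\B(J^*)$.

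The genuine gap is in your sufficiency argument. The inductive step you formulate --- place $\cone_\A(J)$ next to an already-placed $\cone_\A(I)$ across the ridge $R$ --- controls only the pair $(I,J)$: it shows these two cones meet along $\cone_\A(R)$ and lie on opposite sides of $\aff(\cone_\A(R))$, which is nothing more than (B) and (F) restated. It gives no information about intersections of $\cone_\A(J)$ with the non-adjacent cones placed earlier, and no invariant of the form ``the partial union is embedded with multiplicity one'' is preserved by such a step; so ``propagating this around the entire graph'' does not yield the global fan property. Indeed, as you yourself observe, (B) and (F) at every ridge are compatible with multi-sheeted winding, and imposing an ordering on the facets does not change that. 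The one clause that would rescue the argument --- that multiplicity one ``is preserved by each flip across a ridge'' --- is precisely the nontrivial content you would have to prove: that the function $x\mapsto\#\{\text{facets }C : x\in\cone_\A(C)\}$ is locally constant off the measure-zero union of the ridge cones, using (B), (F), and the fact that the sphere $\Delta$ is a connected closed pseudomanifold (every ridge lies in exactly two facets). Once that is in place, constancy plus (I) pins the multiplicity at $1$ globally, which simultaneously yields completeness and disjointness of interiors, and the fan property follows; but this global multiplicity argument \emph{replaces} your graph induction rather than completing it. Finally, the appeal to the Pl\"ucker--Grassmann relations in the inductive step is a red herring: the maximal minors of any realized configuration satisfy them automatically, and they contribute nothing to that step.
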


Chirotopes are practical since properties (B) and (F) enjoy a simple interpretation in terms of signs: cones corresponding to facets of $\Delta$ have non-zero signs (Basis), and two facets intersecting along a ridge have compatible sign with respect to the underlying total order on $J$ (Flip).
This compatibility is made explicit in the next section.
If $\A$ is a vector configuration satisfying conditions (B) and (F), we say that $\Delta$ is \defn{realizable as a chirotope}, a \defn{chirotopal sphere}, or that $\A$ provides a \defn{realization of~$\Delta$ as a chirotope}.
The conditions (B) and (F) only give a partial description of a chirotope, hence it is possible to have several abstract chirotopes realizing the same simplicial sphere~$\Delta$.

Now, assume that we have completed step (T) obtaining a complete fan $\F$ with a vector configuration $\A$ and a Gale transform~$\B$.
The next step is to prove that the induced triangulation~$\Delta_\F$ of $\R^d$ is regular.
Having a fan $\F$ already guarantees that the dual simplicial cones of adjacent facets intersect in their interior.
To have a regular triangulation, the common intersection of \emph{all} dual simplicial cones should have non-empty interior:

\begin{proposition}[{see \cite[Theorem~5.4.5 and 5.4.7]{de_loera_triangulations_2010}}]
Let $\F$ be a complete simplicial fan in $\R^d$ supported by a configuration $\A$ of $m$ vectors and $\B\in\Gale(\A)$.
The triangulation of $\R^d$ induced by $\F$ is regular if and only if the intersection of all dual simplicial cones in $\B$ is a full-dimensional cone in~$\R^{m-d}$.
\end{proposition}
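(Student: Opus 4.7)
The plan is to identify the intersection of all dual simplicial cones in $\B$ with the \emph{secondary cone} of the triangulation induced by $\F$, and then invoke the standard characterization of regularity as non-emptiness of the interior of this secondary cone. First I would recall that a triangulation of $\A$ is regular precisely when there exists a weight vector $\omega\in\R^m$ such that $\F$ arises as the projection of the lower envelope of the lifted configuration $\{(\mathbf{a}_j,\omega_j)\}_{j\in J}$. The set $\mathcal{K}(\F)\subseteq\R^m$ of such weights is a convex polyhedral cone, and two weights induce the same regular subdivision whenever they differ by an element of the row span of $\A$; therefore $\mathcal{K}(\F)$ descends to a cone $\overline{\mathcal{K}}(\F)$ in the quotient $\R^m/\mathrm{rowspan}(\A)\cong \R^{m-d}$, which is canonically identified with the row span of any Gale transform $\B\in\Gale(\A)$.

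Next I would show that in these Gale coordinates $\overline{\mathcal{K}}(\F)$ coincides with $\bigcap_{C}\cone_\B(C^*)$, where the intersection runs over the facets $C$ of $\F$. Fixing such a facet $C$, the assertion that $C$ lies on the lower envelope of the lifting by $\omega$ translates into the system of linear inequalities requiring each remaining vector $\mathbf{a}_j$ with $j\in C^*$ to lie above the affine hyperplane through the lifted points of $C$. Expressing $\mathbf{a}_j$ as a unique affine combination of $\{\mathbf{a}_i:i\in C\}$ (which exists because $C$ is a simplicial facet) and reading off the coefficients produces precisely a generator of a row of the Gale transform, so these inequalities rewrite into the statement that the image of $\omega$ in $\R^{m-d}$ lies in $\cone_\B(C^*)$. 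Intersecting over all facets delivers the claimed identity $\overline{\mathcal{K}}(\F)=\bigcap_{C}\cone_\B(C^*)$.

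Finally, the equivalence in the proposition becomes purely dimensional: $\F$ is a regular triangulation if and only if $\mathcal{K}(\F)$ has non-empty relative interior, equivalently $\overline{\mathcal{K}}(\F)$ is full-dimensional in the quotient $\R^{m-d}$, which is exactly the hypothesis that $\bigcap_{C}\cone_\B(C^*)$ be full-dimensional. The main obstacle I anticipate lies in the second step: the translation of ``$C$ lies on the lower envelope of $\omega$'' into inequalities on the Gale side depends on consistently choosing signs in the associated circuits, and the compatibility of these sign choices for facets sharing a ridge is exactly what the flip condition~(F) of Lemma~\ref{lem:simpl_fan} guarantees. Keeping track of this sign bookkeeping so that the intersection of the cones $\cone_\B(C^*)$ is actually attained (rather than being empty because of mismatched orientations) is the delicate technical core of the argument, and is where the hypothesis that $\F$ be a bona fide fan, not merely a collection of cones, is used in an essential way.
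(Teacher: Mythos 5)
Your proposal is correct and follows essentially the same route as the paper, which states this proposition without proof and cites precisely the secondary-fan/chamber-fan theorems of de~Loera--Rambau--Santos (Theorems~5.4.5 and~5.4.7), whose content is exactly the identification you sketch of $\bigcap_{C}\cone_\B(C^*)$ with the image of the secondary cone $\mathcal{K}(\F)$ in Gale coordinates $\R^m/\mathrm{rowspan}(\A)\cong\R^{m-d}$. Two small corrections: since $\A$ is a vector configuration, each $\mathbf{a}_j$ with $j\in C^*$ is written as a \emph{linear} (not affine) combination of the basis $\{\mathbf{a}_i : i\in C\}$, and the sign bookkeeping you flag as the delicate core is in fact handled facet-by-facet, because the functionals $u_{C,j}$ obtained from these expansions satisfy $\langle u_{C,j},\mathbf{b}_k\rangle=\delta_{jk}$ for $j,k\in C^*$ and hence form the dual basis of $\{\mathbf{b}_k\}_{k\in C^*}$, so that ``$C$ lies on the lower envelope of $\omega$'' is equivalent to $\B\omega$ lying in the interior of $\cone_\B(C^*)$ with no cross-facet compatibility needed; the fan hypothesis is used only to conclude that a weight lying in all these open cones induces exactly the triangulation $\Delta_\F$, since the facets of the complete fan already cover $\R^d$.
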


The relationships between geometric realizations of a spherical simplicial complex $\Delta$ as the boundary of a convex simplicial polytope, as a complete fan and as a chirotope are summarized as follows (compare \cite[Part~I, End of Chapter~3.5 and Chapter~5.7]{ewald_combinatorial_1996}):

\begin{center}
$\Delta$ is polytopal \quad$\Longrightarrow$\quad $\Delta$ is geodesic \quad$\Longrightarrow$\quad $\Delta$ is chirotopal.
\end{center}

\noindent
Going one step from right to left represents a significant leap in difficulty.
Evidently, realizable chirotopes are necessary to obtain geometric realizations as polytopes or complete fans.
Hence understanding the structure of the chirotopes realizing subword complexes is crucial.
Further, since chirotopes are inherently combinatorial and do not capture all of the necessary intricate convex structure, they may be described and studied with more ease.
In the upcoming sections, we thus proceed to describe a particular family of partial chirotopes---refered to as \emph{parameter matrices}---that uncovers the structure of chirotopes of subword complexes. 
Finally, we will show that the existence of parameter matrices is necessary for the realizability of subword complexes as chirotopes and hence as complete fans and polytopes.

\section{Sign functions of words}
\label{sec:s_sign}

In the symmetric group $\Sym_{n+1}\cong A_n$, the sign of a permutation is defined using the parity of its number of pairwise inversions; even permutations being ``$+$'' and odd permutations being ``$-$''.
This definition shows directly that the Hasse diagram of the weak order of the symmetric group (and more generally of Coxeter groups) is bipartite.
In this section, we present an extension of this notion on subsets of words of $S^*$ defined using sign functions.

\begin{definition}[Sign functions on words]
\label{def:sign_fct}
Let $M\subseteq S^*$ be a non-empty set of words in $S$.
A function from $M$ to the multiplicative group $\mathbb{Z}_2$ is a \defn{sign function} on $M$.
The set of sign functions on $M$ equipped with the binary operation
\[
\begin{split}
\mathbb{Z}_2\!{}^M \times \mathbb{Z}_2\!{}^M & \rightarrow\mathbb{Z}_2\!{}^M\\
(\phi,\psi) & \mapsto \phi\psi(m):=\begin{cases} 1, & \text{if } \phi(m)=\psi(m), \\ -1, & \text{else,}\end{cases}
\end{split}
\]
forms a group: the \defn{group of sign functions on} $M$.
\end{definition}

\subsection{Sign function on minors of $\G(w)$}

Given an element $w\in W$, Lemma~\ref{lem:megabi} gives a way to define a close cousin of signs of permutations where now the ground set is $\Red(w)$ and even and odd expressions are then defined in various ways, as Lemma~\ref{lem:megabi} permits.
Two cases are more relevant:

\begin{description}
\item[Sign function $F_{\odd}$] Changing the sign when an odd-length braid move is done, and
      leaving the sign unchanged when an even-length braid move is done. 
      Since the minor $\G^{\odd}(w)$ is bipartite, we can assign 
      one part to have positive sign and the other to have negative 
      sign. This way, the sign changes along every edge representing a braid move of 
      odd length, and the sign remains unchanged along the contracted edges.
\item[Sign function $F_{\even}$] Changing the sign when an even-length braid move is done, and
      leaving the sign unchanged when an odd-length braid move is done.
      Similarly, since $\G^{\even}(w)$ is bipartite we can assign 
      positive and negative signs to the reduced expressions. 
\end{description}

\noindent As illustrated in Figure \ref{fig:minors}, since $\G^{\odd}$ is a minor of $\G^{\comm}$ and $\G^{\even}$ is a minor of $\G^{\braid}$, the sign functions $F_{\even}$ and $F_{\odd}$ are class functions on braid classes and commutation classes, respectively.
These sign functions are unique up to a global multiplication by ``$-1$''.

\begin{example}[$F_{\even}$ and $F_{\odd}$ sign functions on braid and commutation classes in type~$A_3$]
We can give ``$+$'' and ``$-$'' signs to the vertices of $\G^{\even}(\wo)$ and $\G^{\odd}(\wo)$ and by Remark~\ref{rem:simply_laced}, we get
functions on braid and commutation classes that change along edges in $\G^{\even}(\wo)$ and $\G^{\odd}(\wo)$, see Figure~\ref{fig:A3_Tsign}.
\end{example}

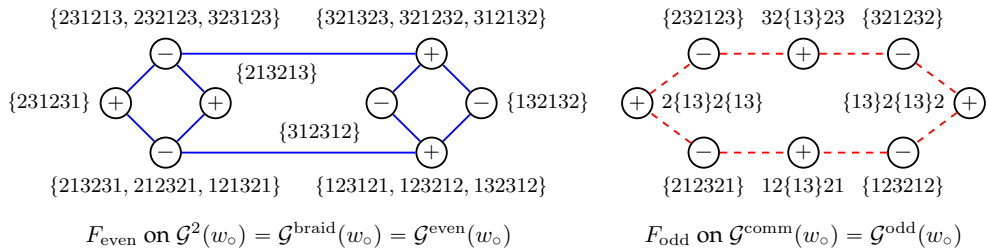
\begin{figure}[!ht]
\begin{center}
\resizebox{.9\hsize}{!}{
\begin{tabular}{cc}
\begin{tikzpicture}[vertex/.style={inner sep=1pt,circle,draw=black,thick},
	    deux/.style={thick,blue},
	    trois/.style={thick,dashed,red}]

\coordinate (center1) at (0,-2);
\node at (center1) {$F_{\even}$ on $\G^2(\wo)=\G^{\braid}(\wo)=\G^{\even}(\wo)$};

\node[vertex,label=below:{\small $\{123121,123212,132312\}$}] (132312) at (2,-0.75) {$+$};
\node[vertex,label=above:{\small $\{321323,321232,312132\}$}] (312132) at (2,0.75) {$+$};
\node[vertex,label=above:{\small $\{231213,232123,323123\}$}] (231213) at (-2,0.75) {$-$} edge[deux] (312132);
\node[vertex,label=below:{\small $\{213231,212321,121321\}$}] (213231) at (-2,-0.75) {$-$} edge[deux] (132312);

\node[vertex,label=below left:{\small $\{312312\}$}] (312312) at (1.25,0) {$-$} edge[deux] (132312) edge[deux] (312132);
\node[vertex,label=right:{\small $\{132132\}$}] (132132) at (2.75,0) {$-$} edge[deux] (132312) edge[deux] (312132);
\node[vertex,label=above right:{\small $\{213213\}$}] (213213) at (-1.25,0) {$+$} edge[deux] (213231) edge[deux] (231213);
\node[vertex,label=left:{\small $\{231231\}$}] (231231) at (-2.75,0) {$+$} edge[deux] (213231) edge[deux] (231213);

\end{tikzpicture}
&
\begin{tikzpicture}[vertex/.style={inner sep=1pt,circle,draw=black,thick},
	    deux/.style={thick,blue},
	    trois/.style={thick,dashed,red}]

\coordinate (center1) at (0,-2);
\node at (center1) {$F_{\odd}$ on $\G^{\comm}(\wo)=\G^{\odd}(\wo)$};

\node[vertex,label=below:{\small $12\{13\}21$}] (123121) at (0,-0.75) {$+$};
\node[vertex,label=below:{\small $\{123212\}$}] (123212) at (1.5,-0.75) {$-$} edge[trois] (123121);
\node[vertex,label=left:{\small $\{13\}2\{13\}2$}] (132312) at (2.5,0) {$+$} edge[trois] (123212);
\node[vertex,label=above:{\small $\{321232\}$}] (321232) at (1.5,0.75) {$-$} edge[trois] (132312);
\node[vertex,label=above:{\small $32\{13\}23$}] (321323) at (0,0.75) {$+$} edge[trois] (321232);
\node[vertex,label=above:{\small $\{232123\}$}] (232123) at (-1.5,0.75) {$-$} edge[trois] (321323);
\node[vertex,label=right:{\small $2\{13\}2\{13\}$}] (231213) at (-2.5,0) {$+$} edge[trois] (232123);
\node[vertex,label=below:{\small $\{212321\}$}] (212321) at (-1.5,-0.75) {$-$} edge[trois] (231213) edge[trois] (123121);

\end{tikzpicture}
\end{tabular}
}
\end{center}
\caption{$F_{\even}$ and $F_{\odd}$ sign functions on braid and commutation classes of $\wo$ in type $A_3$}
\label{fig:A3_Tsign}
\end{figure}

\begin{example}[$F_{\even}$ sign function on braid classes of type $B_3$]
We can give ``$+$'' and ``$-$'' signs to the vertices of $\G^{\even}(\wo)$ and get a class function on braid classes, illustrated in Figure~\ref{fig:B3_Tsign}.

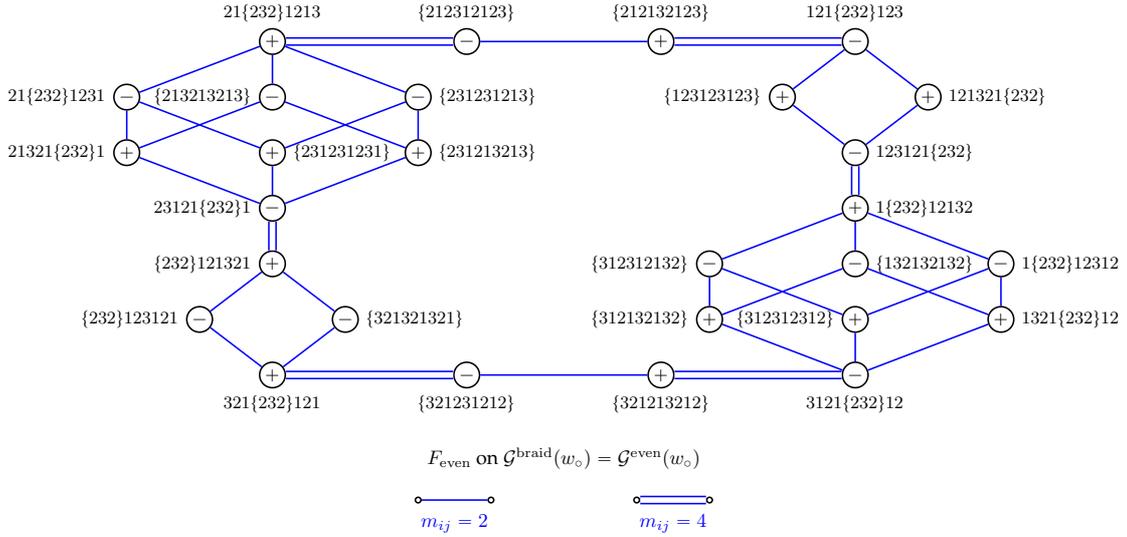
\begin{figure}[H]
\resizebox{1\hsize}{!}{
\begin{tikzpicture}[vertex/.style={inner sep=1pt,circle,draw=black,thick},
	    deux/.style={thick,blue},
	    quatre/.style={thick,blue,double,double distance=1mm}]

\def\lgx{2.6}
\def\lgy{1}

\coordinate (center1) at (0,-4.5);
\node at (center1) {$F_{\even}$ on $\G^{\braid}(\wo)=\G^{\even}(\wo)$};

\node[vertex] (a) at (-0.5*\lgx,-5.25*\lgy) {};
\node[vertex] (b) at (-1*\lgx,-5.25*\lgy) {} edge[deux] node[midway,label=below:${m_{ij}=2}$] {} (a);

\node[vertex] (c) at (0.5*\lgx,-5.25*\lgy) {};
\node[vertex] (d) at (1*\lgx,-5.25*\lgy) {} edge[quatre] node[midway,label=below:${m_{ij}=4}$] {} (c);

\node[vertex,label=below:{\small $3121\{232\}12$}] (r_cu_0) at (2*\lgx,-3*\lgy) {$-$};
\node[vertex,label=left :{\small $\{312132132\}$}] (r_cu_11) at (\lgx,-2*\lgy) {$+$}
edge[deux] (r_cu_0);
\node[vertex,label=left :{\small $\{312312312\}$}] (r_cu_12) at (2*\lgx,-2*\lgy) {$+$}
edge[deux] (r_cu_0);
\node[vertex,label=right:{\small $1321\{232\}12$}] (r_cu_13) at (3*\lgx,-2*\lgy) {$+$}
edge[deux] (r_cu_0);
\node[vertex,label=left :{\small $\{312312132\}$}] (r_cu_21) at (\lgx,-1*\lgy) {$-$}
edge[deux] (r_cu_11)
edge[deux] (r_cu_12);
\node[vertex,label=right:{\small $\{132132132\}$}] (r_cu_22) at (2*\lgx,-1*\lgy) {$-$}
edge[deux] (r_cu_11)
edge[deux] (r_cu_13);
\node[vertex,label=right:{\small $1\{232\}12312$}] (r_cu_23) at (3*\lgx,-1*\lgy) {$-$}
edge[deux] (r_cu_12)
edge[deux] (r_cu_13);
\node[vertex,label=right:{\small $1\{232\}12132$}] (r_cu_3) at (2*\lgx,0) {$+$}
edge[deux] (r_cu_21)
edge[deux] (r_cu_22)
edge[deux] (r_cu_23);

\node[vertex,label=right:{\small $123121\{232\}$}] (r_sq_0)  at (2*\lgx,\lgy) {$-$}
edge[quatre] (r_cu_3);
\node[vertex,label=left :{\small $\{123123123\}$}] (r_sq_11)  at (1.5*\lgx,2*\lgy) {$+$}
edge[deux] (r_sq_0);
\node[vertex,label=right:{\small $121321\{232\}$}] (r_sq_12)  at (2.5*\lgx,2*\lgy) {$+$}
edge[deux] (r_sq_0);
\node[vertex,label=above:{\small $121\{232\}123$}] (r_sq_2)  at (2*\lgx,3*\lgy) {$-$}
edge[deux] (r_sq_11)
edge[deux] (r_sq_12);

\node[vertex,label=above:{\small $21\{232\}1213$}] (l_cu_0)  at (-2*\lgx,3*\lgy) {$+$};
\node[vertex,label=right:{\small $\{231231213\}$}] (l_cu_11) at (-\lgx,2*\lgy) {$-$}
edge[deux] (l_cu_0);
\node[vertex,label=left :{\small $\{213213213\}$}] (l_cu_12) at (-2*\lgx,2*\lgy) {$-$}
edge[deux] (l_cu_0);
\node[vertex,label=left:{\small $21\{232\}1231$}] (l_cu_13) at (-3*\lgx,2*\lgy) {$-$}
edge[deux] (l_cu_0);
\node[vertex,label=right:{\small $\{231213213\}$}] (l_cu_21) at (-\lgx,1*\lgy) {$+$}
edge[deux] (l_cu_11)
edge[deux] (l_cu_12);
\node[vertex,label=right:{\small $\{231231231\}$}] (l_cu_22) at (-2*\lgx,1*\lgy) {$+$}
edge[deux] (l_cu_11)
edge[deux] (l_cu_13);
\node[vertex,label=left :{\small $21321\{232\}1$}] (l_cu_23) at (-3*\lgx,1*\lgy) {$+$}
edge[deux] (l_cu_12)
edge[deux] (l_cu_13);
\node[vertex,label=left :{\small $23121\{232\}1$}] (l_cu_3)  at (-2*\lgx,0) {$-$}
edge[deux] (l_cu_21)
edge[deux] (l_cu_22)
edge[deux] (l_cu_23);

\node[vertex,label=left :{\small $\{232\}121321$}] (l_sq_0)  at (-2*\lgx,-\lgy) {$+$}
edge[quatre] (l_cu_3);
\node[vertex,label=left :{\small $\{232\}123121$}] (l_sq_11)  at (-2.5*\lgx,-2*\lgy) {$-$}
edge[deux] (l_sq_0);
\node[vertex,label=right:{\small $\{321321321\}$}] (l_sq_12)  at (-1.5*\lgx,-2*\lgy) {$-$}
edge[deux] (l_sq_0);
\node[vertex,label=below:{\small $321\{232\}121$}] (l_sq_2)  at (-2*\lgx,-3*\lgy) {$+$}
edge[deux] (l_sq_11)
edge[deux] (l_sq_12);

\node[vertex,label=below:{\small $\{321231212\}$}] (b_1)  at (-2*\lgx/3,-3*\lgy) {$-$}
edge[quatre] (l_sq_2);
\node[vertex,label=below:{\small $\{321213212\}$}] (b_2)  at (2*\lgx/3,-3*\lgy) {$+$}
edge[deux] (b_1)
edge[quatre] (r_cu_0);

\node[vertex,label=above:{\small $\{212312123\}$}] (u_1)  at (-2*\lgx/3,3*\lgy) {$-$}
edge[quatre] (l_cu_0);
\node[vertex,label=above:{\small $\{212132123\}$}] (u_2)  at (2*\lgx/3,3*\lgy) {$+$}
edge[deux] (u_1)
edge[quatre] (r_sq_2);

\end{tikzpicture}
}
\caption{$F_{\even}$ sign function on braid classes for $\wo$ in type $B_3$}
\label{fig:B3_Tsign}
\end{figure}
\end{example}

The $T$-signature of reduced expressions is the sign function where even-length braid moves change the sign, i.e., the sign function $F_{\even}$ above on the graph $\G^{\even}(\wo)$.
The \defn{$T$-sign function} is defined as
\[
\tau:\Red(\wo) \rightarrow \{+1,-1\},
\]
such that 
if $w$ and $w'$ are two reduced expressions of $\wo$ related by a braid move of length $m_{i,j}$, 
then $\tau(w)=(-1)^{m_{i,j}-1}\tau(w')$, see~\cite[Definition~3.5]{bergeron_fan_2015}.

\begin{remark}
The letter $T$ is used to hint at a usual notation for the set of reflections of $W$, see \cite[Remark~3.7]{bergeron_fan_2015} for an equivalent formulation in type $A$ using $T$.
The $T$-sign function is well-defined by Lemma~\ref{lem:megabi} and unique up to a global multiplication by ``$-1$''.
A certain choice is well-suited for our purpose and is specified in Section~\ref{ssec:st_sign}.
\end{remark}

\begin{example}
In Figures~\ref{fig:A3_Tsign} and \ref{fig:B3_Tsign}, we can determine the sign of all reduced expressions using the bipartiteness of $\G^{\even}(\wo)$, namely every element in the same braid class (of length $3$ in those cases) receive the same sign. 
\end{example}

The $T$-sign function is central to a combinatorial and geometric condition that led to the construction of complete simplicial fans for subword complexes in \cite{bergeron_fan_2015}.
It helped to deliver complete fans with cone lattices corresponding to subword complexes of type $A_3$ and some of type~$A_4$.
This condition is based on signature matrices:

\begin{definition}[{Coxeter signature matrices \cite[Definition~9]{bergeron_fan_2015}}]
\label{def:signature_matrix}
Let $(W,S)$ be a finite irreducible Coxeter system and $p\in S^m$.
A matrix $\B\in\R^{N\times m}$ is a \defn{signature matrix} of type~$W$ for $p$, if for every occurrence $Z$ of every reduced expression $v$ of $\wo$ in~$p$, the equality
\[
\sign(\det [\B]_Z) = \tau(v)
\]
holds.
\end{definition}

The vector configuration underlying a signature matrix yields a \emph{partially} defined chirotope: subsets corresponding to a reduced expression $v$ of $\wo$ must have the non-zero sign $\tau(v)$, while all others are left undetermined.
Let $p\in S^m$ and $\A\in~\R^{(m-N)\times m}$.
We denote by $\F_{p,\A}$ the collection of cones spanned by sets of columns of $\A$ that correspond to faces of the subword complex $\Delta_W(p)$.
The following proposition demonstrates the important role of signature matrices, and hence of the $T$-sign function, in order for $\F_{p,\A}$ to form a complete simplicial fan.
In particular, it bring to light how the $T$-sign prescribes the orientation of dual simplicial cones in order to get a complete simplicial fan.

\begin{proposition}[{\cite[Section~3.1, Theorem~3.7]{ceballos_associahedra_2012}, see also \cite[Theorem~3]{bergeron_fan_2015}}]
\label{prop:signature_matrix}
Let $p\in S^m$ and $\A\in~\R^{(m-N)\times m}$.
The collection of cones $\F_{p,\A}$ is a complete simplicial fan realizing the subword complex $\Delta_W(p)$ if and only if
\begin{itemize}
\item[(S)] a Gale transform $\B\in\Gale(\A)$ is a signature matrix for $p$, (Signature) and
\item[(I)] there is a facet of $\Delta_W(p)$ such that the interior of its associated cone in $\F_{p,\A}$ does not intersect any other cone of $\F_{p,\A}$. (Injectivity)
\end{itemize}
\end{proposition}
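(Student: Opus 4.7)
The plan is to invoke Lemma~\ref{lem:simpl_fan} and reduce the claim to the equivalence, in the presence of a totally cyclic $\A$, of condition (S) with the conjunction of (B) and (F); condition (I) appears identically in both statements. The bridge is the standard bijection between facets $F$ of $\Delta_W(p)$ and occurrences $Z=[m]\setminus F$ of reduced expressions $v\in\Red(\wo)$ in $p$, under which each dual simplicial cone $\cone_{\B}(Z)$ appearing in Lemma~\ref{lem:simpl_fan} is indexed by exactly one such occurrence.

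First, (S)~$\Rightarrow$~(B) is immediate, since $\tau(v)\in\{\pm 1\}$ forces $\det[\B]_Z\neq 0$. For the equivalence of (F) with the remaining content of (S), consider two facets $F_1,F_2$ of $\Delta_W(p)$ sharing a ridge, with $Z_1\triangle Z_2=\{a,b\}$; the dual cones $\cone_\B(Z_1)$ and $\cone_\B(Z_2)$ then share the codimension-one cone $\cone_\B(Z_1\cap Z_2)$. A Laplace expansion of $\det[\B]_{Z_1}$ and $\det[\B]_{Z_2}$ along their differing columns $\B_a$ and $\B_b$ expresses both as nonzero multiples of a common $(N-1)\times(N-1)$ minor, so the sign ratio $\sign(\det[\B]_{Z_1}) / \sign(\det[\B]_{Z_2})$ records whether $\B_a$ and $\B_b$ lie on the same or opposite side of the hyperplane spanned by $\cone_\B(Z_1\cap Z_2)$; this is precisely the geometric content of condition (F). On the other hand, the flip $F_1\leftrightarrow F_2$ translates at the word level into a controlled modification of $v_1$ into $v_2$ whose total effect on $\tau$ is the product of the parities $(-1)^{m_{ij}-1}$ over the braid moves realized by this flip. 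Matching these two sign ratios shows that, under (B), condition (F) is equivalent to the function $v\mapsto\sign(\det[\B]_{Z(v)})$ satisfying the defining recursion of $\tau$.

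Since $\G^{\even}(\wo)$ is connected and bipartite by Lemma~\ref{lem:megabi} and the flip graph of $\Delta_W(p)$ surjects onto it, propagating the sign compatibility along all flips pins the function $v\mapsto\sign(\det[\B]_{Z(v)})$ down to $\tau$ up to the inessential global sign implicit in the definition of the $T$-sign function; this yields (S). The main obstacle is the concrete determinantal computation relating a single-column exchange in $[\B]_Z$ to the factor $(-1)^{m_{ij}-1}$ attached to the corresponding braid move. This requires carefully tracking the positions in $p$ affected by the flip, the letters of $v$ rearranged, and the resulting sign arising from reordering the columns of $\B$. Aligning the geometric sign (same versus opposite side of a ridge hyperplane) with the combinatorial sign of $\tau$ is the crux of the proposition and is precisely the content elaborated in~\cite{ceballos_associahedra_2012,bergeron_fan_2015}.
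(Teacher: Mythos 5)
You should note at the outset that the paper itself contains no proof of this proposition: it is imported verbatim with citations to \cite[Section~3.1, Theorem~3.7]{ceballos_associahedra_2012} and \cite[Theorem~3]{bergeron_fan_2015}, so the only meaningful comparison is against those sources. Your frame is the right one and matches theirs in skeleton: reduce to Lemma~\ref{lem:simpl_fan} via the bijection between facets of $\Delta_W(p)$ and occurrences of reduced words of $\wo$ in $p$; observe that (S)~$\Rightarrow$~(B) is immediate since $\tau(v)\neq 0$; and translate (F), via Laplace expansion along the exchanged columns, into the statement that $\mathbf{B}_a$ and $\mathbf{B}_b$ lie on the same strict side of the hyperplane spanned by the common ridge's dual cone. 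That geometric dictionary is correct (both dual cones lie in the closed halfspaces determined by that hyperplane and the column in which they differ, so their interiors meet exactly when the two columns are on the same side).

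The genuine gap is that your last paragraph concedes precisely the point that constitutes the theorem. What must be proved is the identity, for every flip with $Z_1=Z\cup\{a\}$, $Z_2=Z\cup\{b\}$ and associated reduced words $v_1,v_2$, that
\[
\tau(v_1)\,\tau(v_2)\;=\;(-1)^{\#\{z\in Z\,:\,\min(a,b)<z<\max(a,b)\}},
\]
i.e.\ that the product of braid-move parities $(-1)^{m_{ij}-1}$ along any path from $v_1$ to $v_2$ in $\G(\wo)$ equals the column-reordering sign needed to compare $\det[\B]_{Z_1}$ and $\det[\B]_{Z_2}$ with columns in increasing order. Writing ``this is precisely the content elaborated in the references'' at that juncture leaves the equivalence of (F) with (S) unestablished; everything before it is bookkeeping. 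Two secondary inaccuracies: the flip graph of $\Delta_W(p)$ does \emph{not} surject onto $\G^{\even}(\wo)$ in general --- only reduced words actually occurring in $p$ arise (Definition~\ref{def:signature_matrix} quantifies only over these), and a single flip corresponds to a \emph{sequence} of braid moves, not an edge of $\G(\wo)$. What your propagation argument actually needs is connectivity of the facet--ridge graph of $\Delta_W(p)$ (it is a pseudomanifold), well-definedness of $\tau$ on $\Red(\wo)$ (Lemma~\ref{lem:megabi}), and the freedom in the choice of $\B\in\Gale(\A)$ to absorb the global sign. Finally, Lemma~\ref{lem:simpl_fan} is stated for totally cyclic $\A$; in the ``if'' direction this hypothesis must be secured rather than assumed.
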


\subsection{The $S$-sign function}

The monoid morphism from $S^*$ to $\mathbb{Z}_2$ sending a word $w$ to $(-1)^{\ell(w)}$ is the \emph{parity sign function}.
This function does not take the lexicographic order on $S$ into account.
In contrast, the $S$-sign function defined below intrinsically makes use of the lexicographic order.
As it turns out, the $S$-sign function is an integral part of the sign of $\det[\B]_Z$ in Definition~\ref{def:signature_matrix}.

Before defining the $S$-sign function, we first give some basic concepts, see e.g.~\cite[Chapter~1]{diekert_combinatorics_1990} for more details.
Let $w\in S^m$ be a word with abelian vector $\abel_w=(c_i)_{s_i\in S}$.
The word $\overline{w}:=s_1^{c_1}\dots s_n^{c_n}$ is the \emph{lexicographic normal form} of $\abel_w$.
Permutations in $\Sym_m$ acts on the letters of $w$ as
\[
\pi\cdot w := w_{\pi(1)} \cdots w_{\pi(m)},
\]
where $\pi\in \Sym_m$.
The permutation of $\Sym_m$ with exactly the same inversions as $w$ (inversions are defined similarly as for permutations written as lists) is its \emph{standard permutation} and is denoted $\std(w)$.
The standard permutation is the minimal length permutation whose inverse sorts $w$: $\std(w)^{-1}\cdot w = \overline{w}$.
The \emph{inversion number} $\inv(w)$ of $w$ is the number of inversions of $\std(w)$.
Equivalently, the inversion number $\inv(w)$ is the smallest number of swaps of two consecutive letters of $w$ required to obtain the lexicographic normal form of
$\abel_w$.

\begin{remark}
The lexicographic normal form appears as the ``non-decreasing rearrangement'' of a word, see e.g. \cite[Section~2]{hohlweg_inverses_2001}.
Therein, the authors refer to an article of Schensted \cite{schensted_longest_1961} where the standardization of a word was introduced. 
\end{remark}

The following lemma can be verified through a direct calculation.

\begin{lemma}\label{lem:count_inv}
Let $v\in S^m$.
The number of swaps $\inv(v)$ equals the number of pairs $(i,j)\in[m]^2$ such that $i<j$ and the letter $v_j\in S$ is smaller than the letter $v_i\in S$ in the lexicographic order.
\end{lemma}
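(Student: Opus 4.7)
The plan is to use the adjacent-swap characterization of $\inv(v)$. Let $\eta(v) := \#\{(i,j) \in [m]^2 : i < j \text{ and } v_j < v_i\}$; the goal is to show $\inv(v) = \eta(v)$.

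The crux is to analyze how $\eta$ changes under a single swap of adjacent positions $k, k+1$. Since such a swap only alters the relative order of the pair $(k, k+1)$ itself, while every other pair $(i,j)$ retains the same pair of letters in the same relative order, $\eta$ changes by exactly $+1$ if $v_k < v_{k+1}$, by $-1$ if $v_k > v_{k+1}$, and is unchanged if $v_k = v_{k+1}$ (in which case the swap acts trivially on $v$ anyway).

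For $\eta(v) \leq \inv(v)$: a minimum-length sequence of $\inv(v)$ adjacent transpositions transforming $v$ into $\overline{v}$ may be assumed to contain no vacuous equal-letter swap (such a swap could be removed without increasing length). Its endpoint satisfies $\eta(\overline{v}) = 0$ because $\overline{v}$ is weakly increasing, and each nontrivial swap changes $\eta$ by $\pm 1$, so the triangle inequality yields $\eta(v) \leq \inv(v)$. For the reverse, I induct on $\eta(v)$. If $\eta(v)=0$, the word has no strict descent, hence $v = \overline{v}$ and $\inv(v) = 0$. If $\eta(v) > 0$, pick any index $k$ with $v_k > v_{k+1}$ (at least one exists, else $v$ would be weakly increasing, contradicting $\eta(v) > 0$), and let $v'$ be the word obtained by swapping positions $k$ and $k+1$. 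Then $\eta(v') = \eta(v) - 1$, and by induction $\inv(v') \leq \eta(v) - 1$, so prepending this swap to a minimum sorting sequence for $v'$ gives $\inv(v) \leq \eta(v)$.

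The main subtlety is reconciling the two formulations of $\inv(v)$ given in the text, namely as the number of inversions of $\std(v)$ and as the minimum adjacent-swap count to reach $\overline{v}$. The equivalence follows from the standard fact that the inversion number of a permutation equals the length of its shortest reduced expression in the generators $(i,i+1) \in \Sym_m$: applying this to $\std(v)^{-1}$ and noting that $\std(v)^{-1} \cdot v = \overline{v}$ translates reduced expressions into sorting sequences on $v$, so the two quantities agree. Once this bridge is set up, the inductive argument above delivers the claimed equality.
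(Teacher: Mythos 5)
Your proof is correct, but it is worth noting that the paper offers no proof of this lemma at all: it is treated as immediate from the definitions, since the standard permutation $\std(v)$ is \emph{defined} as the permutation of $\Sym_m$ ``with exactly the same inversions as $v$,'' so that $\inv(v)$, being the number of inversions of $\std(v)$, is by construction the number of pairs $(i,j)$ with $i<j$ and $v_j<v_i$; the identification with the minimum number of adjacent swaps is likewise asserted without proof (``Equivalently, \dots''). What you have done is supply the content that the paper leaves implicit, and in fact you prove slightly more than the lemma asks: your bubble-sort argument (the $\pm 1$ change of the descent-pair count $\eta$ under an adjacent swap, the triangle-inequality bound $\eta(v)\leq\inv(v)$, and the induction on $\eta(v)$ producing a sorting sequence of length $\eta(v)$) establishes directly that the minimum adjacent-swap count equals $\eta(v)$, and your bridge via the standard fact that the Coxeter length of a permutation in $\Sym_m$ equals its inversion number, applied to $\std(v)^{-1}$, justifies the paper's unproved ``Equivalently'' identifying the two formulations of $\inv(v)$. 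Your argument is sound throughout — the key observation that a swap at positions $k,k+1$ leaves the contribution of every other pair to $\eta$ unchanged is verified correctly, and the base case $\eta(v)=0$ forcing $v=\overline{v}$ is right since a word with no inversion pair is weakly increasing. The only mildly glossed point is the bookkeeping in the bridge: one should check that a length-$t$ sorting sequence yields \emph{some} permutation $\sigma$ with $\sigma\cdot v=\overline{v}$ and $t\geq\ell(\sigma)$, and then invoke the minimality of $\std(v)$ among sorting permutations (which is how the paper defines it) to conclude; but this is routine and does not affect correctness. In short: where the paper buys brevity by building the lemma into its definitions, your route buys a self-contained, elementary verification that the three descriptions of $\inv(v)$ — inversions of $\std(v)$, minimum adjacent swaps, and descent pairs — all coincide.
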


\begin{definition}[$S$-sign of a word]
Let $w\in S^*$.
The \defn{$S$-sign} $\sigma(w)$ of $w$ is
\[
\sigma(w):=(-1)^{\inv(w)},
\]
where $\inv(w)$ is the inversion number of $w$.
\end{definition}

The following proposition predicts the behavior of the $S$-sign depending on the abelian vector of words.

\begin{proposition}\label{prop:total_invs}
Let $w\in S^m$ and let $\abel_w=(c_i)_{s_i\in S}$ be its abelian vector.
The inversion number of~$w$ and its reverse $\rev(w)$ satisfy
\[
\inv(w) + \inv(\rev(w)) = \binom{m}{2} - \sum_{s\in S} \binom{c_i}{2}.
\]
Therefore, $\inv(w)$ and $\inv(\rev(w))$ have the same parity if and only if $\binom{m}{2} - \sum_{s\in S} \binom{c_i}{2}$ is even.
\end{proposition}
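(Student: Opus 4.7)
The plan is to decompose the total number of ordered pairs of positions $\binom{m}{2}$ into three classes according to the comparison of the letters at those positions, and then identify two of the classes with $\inv(w)$ and $\inv(\rev(w))$ respectively via Lemma~\ref{lem:count_inv}.

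First, by Lemma~\ref{lem:count_inv}, $\inv(w)$ is the number of pairs $(i,j)$ with $1\le i<j\le m$ and $w_j<w_i$ in the lexicographic order on $S$. For $\inv(\rev(w))$, applying the same lemma to $\rev(w)=w_m\cdots w_1$ and reindexing via $i'=m-j+1$, $j'=m-i+1$, I would show that $\inv(\rev(w))$ is equal to the number of pairs $(i,j)$ with $1\le i<j\le m$ and $w_i<w_j$. This reindexing is the only mildly delicate step, but it is a direct bookkeeping check.

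Next, I would partition the set of all pairs $\{(i,j):1\le i<j\le m\}$, of cardinality $\binom{m}{2}$, according to whether $w_i<w_j$, $w_i>w_j$, or $w_i=w_j$. By the previous step, the first class contributes $\inv(\rev(w))$ and the second contributes $\inv(w)$, so
\[
\inv(w)+\inv(\rev(w)) = \binom{m}{2} - \#\{(i,j): i<j,\ w_i=w_j\}.
\]
Finally, the pairs with $w_i=w_j$ are counted letter by letter: for each $s_k\in S$, the set of occurrences $\{w\}_k$ has cardinality $c_k$, and so contributes $\binom{c_k}{2}$ unordered pairs. Summing over $s_k\in S$ yields $\sum_{s\in S}\binom{c_i}{2}$, which gives the claimed identity.

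For the parity consequence, one simply observes that $\inv(w)$ and $\inv(\rev(w))$ share the same parity if and only if their sum is even, which by the identity above is precisely the condition that $\binom{m}{2}-\sum_{s\in S}\binom{c_i}{2}$ is even. I do not anticipate any real obstacle: the whole proof is a straightforward double count, with the reindexing $i'=m-j+1,\ j'=m-i+1$ being the only spot that requires a moment's care.
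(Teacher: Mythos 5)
Your proposal is correct and is essentially the paper's own argument: both split the $\binom{m}{2}$ pairs of positions into the three exclusive cases (equal letters, letters in lexicographic order left-to-right, letters in lexicographic order right-to-left) and identify the latter two with $\inv(\rev(w))$ and $\inv(w)$ via Lemma~\ref{lem:count_inv}, with the equal-letter pairs counted as $\sum_{s_i\in S}\binom{c_i}{2}$. The only difference is cosmetic: you spell out the reindexing $i=m-j'+1$, $j=m-i'+1$ that the paper leaves implicit, and your check of it is correct.
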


\begin{proof}
We give a bijective proof.
There are $\binom{m}{2}$ pairs of distinct positions in the word $w$.
These pairs split into three exclusive cases.
\begin{itemize}
\item The two letters are the same in $S$,
\item the two letters are in lexicographic order from left-to-right, or
\item the two letters are in lexicographic order from right-to-left.
\end{itemize}
There are $\sum_{s\in S}\binom{c_i}{2}$ pairs in the first case.
The two other cases are exactly the inversions of $w$ and $\rev(w)$ by Lemma~\ref{lem:count_inv}.
\end{proof}

\begin{example}[Sign of permutations]
The $S$-signature of words in $S^*$ is an extension of the usual sign function on permutation.
Let $w\in S^*$ with abelian vector $\abel_w=(1,\dots,1)$.
Reading the indices of the letters of the word $w$ from left to right gives a permutation of $[n]$.
The inversion number $\inv(w)$ counts the number of pairs of 
(necessarily distinct) numbers in $[n]$ which are unordered in~$w$ and hence the $S$-sign of $w$ is equal to the usual sign of the permutation that $w$ represents.
\end{example}

\begin{proposition}
Let $w\in S^*$ and $\phi:S\rightarrow S$ be the lexicographic order reversing map such that $\phi(s_i)=s_{n-i+1}$.
Denote by $\overleftarrow{\inv}(w)$ the number of swaps of $w$ in the reversed
ordering of the alphabet.
Then
\[
\overleftarrow{\inv}(w) = \inv(\rev(w)) = \inv(\phi(w)).
\]
\end{proposition}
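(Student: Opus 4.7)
The plan is to apply the combinatorial characterization of $\inv$ from Lemma~\ref{lem:count_inv} to each of the three quantities and observe that, in each case, the count reduces to the same set of position pairs. The only subtlety is bookkeeping, so I would make the three parallel computations explicit.

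First I would compute $\overleftarrow{\inv}(w)$. Writing $<_\phi$ for the reversed order on~$S$, so that $s_n <_\phi s_{n-1} <_\phi \cdots <_\phi s_1$, Lemma~\ref{lem:count_inv} applied with respect to $<_\phi$ gives
\[
\overleftarrow{\inv}(w) \;=\; \#\bigl\{(i,j)\in[m]^2 : i<j,\ w_j <_\phi w_i\bigr\}.
\]
Since $<_\phi$ is the reversal of~$<$, the condition $w_j <_\phi w_i$ is equivalent to $w_i < w_j$ (in the original lexicographic order).

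Next, for $\inv(\phi(w))$, Lemma~\ref{lem:count_inv} yields
\[
\inv(\phi(w)) \;=\; \#\bigl\{(i,j) : i<j,\ \phi(w_j) < \phi(w_i)\bigr\},
\]
and again, because $\phi$ is order-reversing, the inner condition is $w_i < w_j$. Thus $\overleftarrow{\inv}(w) = \inv(\phi(w))$.

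Finally, for $\inv(\rev(w))$, I set $u := \rev(w)$, so that $u_k = w_{m-k+1}$, and apply Lemma~\ref{lem:count_inv} to~$u$:
\[
\inv(\rev(w)) \;=\; \#\bigl\{(a,b) : a<b,\ u_b < u_a\bigr\} \;=\; \#\bigl\{(a,b) : a<b,\ w_{m-b+1} < w_{m-a+1}\bigr\}.
\]
The change of variables $i := m-b+1$, $j := m-a+1$ is an involutive bijection on pairs with $a<b$ (respectively $i<j$), under which the condition becomes $w_i < w_j$. Hence $\inv(\rev(w))$ also equals $\#\{(i,j) : i<j,\ w_i < w_j\}$, matching the other two counts and completing the proof.

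There is no real obstacle here; the statement is essentially a restatement of three symmetries (reversing the alphabet, relabeling by an order-reversing involution, and reversing positions) which all have the effect of exchanging the roles of $w_i < w_j$ and $w_j < w_i$ in the counting formula of Lemma~\ref{lem:count_inv}. The only place to be careful is the index substitution $i = m-b+1$, $j = m-a+1$ in the $\rev$ case, to ensure it is indeed a bijection on the ordered pairs.
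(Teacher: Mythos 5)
Your proof is correct, and it differs from the paper's in one half. For the equality $\overleftarrow{\inv}(w)=\inv(\phi(w))$ you argue exactly as the paper does: apply Lemma~\ref{lem:count_inv} and use that $\phi$ (equivalently, reversing the order on $S$) exchanges the conditions $w_i<w_j$ and $w_j<w_i$. For the equality $\overleftarrow{\inv}(w)=\inv(\rev(w))$, however, the paper argues via sorting: permuting the letters of $w$ into normal form for the reversed alphabet order produces the reverse of the usual lexicographic normal form, and the count of swaps agrees ``by symmetry.'' You instead stay entirely inside the pair-counting characterization of Lemma~\ref{lem:count_inv} and make the symmetry explicit as the involutive substitution $i=m-b+1$, $j=m-a+1$ on ordered pairs, showing that all three quantities equal the single count $\#\{(i,j): i<j,\ w_i<w_j\}$. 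Your route is more uniform and elementary --- every step is a verified bijection on pairs, with no appeal to properties of normal forms or minimal sorting sequences --- at the cost of slightly more bookkeeping; the paper's sorting argument is shorter but leaves the symmetry step informal. Your care with the index substitution (checking that $a<b$ corresponds to $i<j$) is exactly the point where an explicit verification is needed, and you handle it correctly, including the implicit exclusion of equal-letter pairs since all order conditions are strict.
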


\begin{proof}
When permuting the letters of $w$ to obtain the lexicographic normal form with respect to the reverse lexicographic order we obtain the reverse of the
lexicographic normal form obtained with the usual ordering of the alphabet.
Therefore, reversing $w$ and ordering alphabetically gives the same number of inversions by symmetry, proving $\overleftarrow{\inv}(w) = \inv(\rev(w))$.

By Lemma~\ref{lem:count_inv}, the number $\overleftarrow{\inv}(w)$ is equal to the number of pairs $(i,j)\in[m]^2$ such that $i<j$ and the letter $w_j\in S$ is \emph{larger} than the letter $w_i\in S$ in the lexicographic order.
By applying $\phi$ to~$w$ these pairs $(i,j)$ become exactly the inversions of $\phi(w)$, proving that $\overleftarrow{\inv}(w)=\inv(\phi(w))$.
\end{proof}

The $S$-sign of words behaves differently from the $T$-sign along braid moves
as the following theorem shows.

\begin{mainthm}
\label{thm:A_Sbraid}
Let $1\leq i<j \leq n$, and $u,v\in S^*$ be two words. 
Further, define
\[
b_{i,j} := s_is_js_i\dots \text{ of length } m_{i,j}, \quad
\kappa  := \sum_{i<k\leq j}|u|_{k}, \quad
\text{ and } \mu     := \sum_{i\leq k < j} |v|_k.
\]
In other words, the number $\kappa$ is the number of occurrences of letters $s_k$ in $u$ such that $i<k\leq j$ and $\mu$ is the number of occurrences of letters $s_k$ in $v$ such that $i\leq k < j$.
The $S$-sign function $\sigma$ satisfies
\[
\sigma(ub_{i,j}v)=
\begin{cases} (-1)^{\frac{m_{i,j}}{2}}\sigma(ub_{j,i}v), & \text{ if } m_{i,j} \text{ is even}, \\ 
(-1)^{\kappa+\mu}\sigma(ub_{j,i}v), & \text{ if } m_{i,j} \text{ is odd.}
\end{cases}
\]
\end{mainthm}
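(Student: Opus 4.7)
The plan is to work directly from the definition $\sigma(w) = (-1)^{\inv(w)}$ and compare $\inv(ub_{i,j}v)$ with $\inv(ub_{j,i}v)$ by partitioning every inversion-pair of a word of the form $uXv$ into six types depending on where the two positions lie: both in $u$, both in $v$, both in $X$, one in $u$ and one in $v$, one in $u$ and one in $X$, or one in $X$ and one in $v$. Since the prefix $u$ and suffix $v$ are identical in $ub_{i,j}v$ and $ub_{j,i}v$, the first four contributions cancel out in the difference, and only three terms need to be computed: (i) inversions internal to the braid factor, (ii) inversions between $u$ and the factor, and (iii) inversions between the factor and $v$.

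For contribution (i), I would count inversions directly using Lemma~\ref{lem:count_inv}. Writing $m=m_{i,j}$ and separating the even case $m=2k$ from the odd case $m=2k+1$, a short arithmetic-progression computation yields $\inv(b_{j,i})-\inv(b_{i,j})= k = m/2$ when $m$ is even and $\inv(b_{j,i})-\inv(b_{i,j}) = 0$ when $m$ is odd (in the odd case both factors contain $k+1$ copies of the outer letter and $k$ copies of the inner letter, giving identical inversion counts $\binom{k+1}{2}$).

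For (ii) and (iii), the key observation is that swapping $b_{i,j}$ for $b_{j,i}$ preserves the multiset of letters in the factor when $m$ is even, so in that case each fixed letter $s_l$ in $u$ or $v$ contributes the same number of inversions with the factor in either word, and the total difference reduces to~(i). When $m$ is odd, the factor $b_{i,j}$ contains one extra $s_i$ and one fewer $s_j$ compared to $b_{j,i}$. A case analysis on the index $l$ of a letter in $u$ (splitting into $l<i$, $l=i$, $i<l<j$, $l=j$, $l>j$ and using $s_i<s_j$ in the lex order) shows that only letters with $i<l\leq j$ contribute a nonzero difference, and each such letter contributes $-1$ to $\inv(ub_{j,i}v)-\inv(ub_{i,j}v)$; summed over $u$ this gives $-\kappa$. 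A symmetric analysis for letters in~$v$ (now using that the factor is on the \emph{left} of~$v$, so inversions are recorded when the factor letter exceeds the letter in $v$) shows that only letters with $i\leq l<j$ contribute, each with a $+1$, producing a total of $+\mu$.

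Combining the pieces gives $\inv(ub_{j,i}v)-\inv(ub_{i,j}v)\equiv m/2\pmod 2$ in the even case and $\equiv -\kappa+\mu \equiv \kappa+\mu\pmod 2$ in the odd case, which is exactly the claimed identity after exponentiating to $(-1)^{(\cdot)}$. The main subtlety is bookkeeping in the odd case: one must be careful that the contributions from $u$ and from $v$ involve the \emph{asymmetric} index ranges $i<k\leq j$ and $i\leq k<j$ respectively, which is precisely why $\kappa$ and $\mu$ are defined with different endpoint conventions; this is the place where a sloppy case split would produce an incorrect formula.
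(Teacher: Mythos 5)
Your proposal is correct and takes essentially the same approach as the paper: both arguments count inversions via Lemma~\ref{lem:count_inv}, reduce the even case to the internal inversions of the braid factor (your counts $k(k-1)/2$ versus $k(k+1)/2$ are exactly the paper's $m_{i,j}(m_{i,j}-2)/8$ and $m_{i,j}(m_{i,j}+2)/8$), and in the odd case locate the entire difference $-\kappa+\mu$ in the cross inversions between the factor and $u$, respectively $v$. The only cosmetic difference is that the paper obtains the odd-case count by moving the first $s_i$ of $b_{i,j}$ to the end and renaming it $s_j$, whereas you compare per-letter-class inversion counts directly; the underlying bookkeeping, including the asymmetric ranges $i<l\leq j$ and $i\leq l<j$, is identical.
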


\begin{proof}
By Lemma~\ref{lem:count_inv}, we have to track the change in the number of inversions in the word after doing a braid move.

Suppose that $m_{i,j}$ is even.
Since the abelian vector of the words $ub_{i,j}v$ and $ub_{j,i}v$ are the same, it
suffices to examine the changes in the number of swaps involving two letters
that are contained in $b_{i,j}$. Indeed, the ordering of any
other pair of positions stay unchanged. The number of swaps in $b_{i,j}$ is
$m_{i,j}(m_{i,j}-2)/8$ and the number of swaps in $b_{j,i}$ is
$m_{i,j}(m_{i,j}+2)/8$ hence their difference is ${m_{i,j}}/{2}$.

Suppose that $m_{i,j}$ is odd.
It suffices to consider the change in the number of swaps involving at least
one position in $b_{i,j}$.  The number of swaps in $b_{i,j}$ and
$b_{j,i}$ are the same, since the first occurrence of~$s_i$ is not swapped with any occurrence of $s_j$ in
$b_{i,j}$, and putting it at the end and simultaneously replacing it by $s_j$ to obtain
$b_{j,i}$ does not create any new swap.
Therefore, we only need to count the
number of swaps involving the first occurrence of $s_i$ in $b_{i,j}$ with letters in $u$,
which are not swaps once the occurrence of $s_i$ is moved at the end of~$b_{i,j}$ and replaced by~$s_j$. 
This number is exactly $\kappa$. 
Further, after removing $s_i$ at the
beginning of $b_{i,j}$ and putting $s_j$ at its end, we create swaps with the
letters in $v$ which did not need to be swapped with $s_i$, this number of swaps is exactly~$\mu$.
\end{proof}

\begin{maincor}
\label{cor:B_commutations}
Let $w\in W$ and $u,v\in\Red(w)$ be two reduced words for $w$ that are related by $k$
commutations, i.e. braid moves of length 2. The $S$- and $T$-sign function 
satisfy
\[
\sigma(u)=(-1)^k\sigma(v) \text { and } \tau(u)=(-1)^k\tau(v).
\]
In other words, both the $S$-sign and the $T$-sign change along braid moves of length $2$.
\end{maincor}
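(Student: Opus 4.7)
The corollary should fall out almost immediately from Theorem~\ref{thm:A_Sbraid} and the definition of the $T$-sign function, so the plan is to reduce to a single-commutation statement and then iterate.

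First, I would show that any single commutation flips both signs. For the $T$-sign, this is by construction: the defining property of $\tau$ says that if two reduced expressions differ by a braid move of length $m_{i,j}$, their $T$-signs differ by $(-1)^{m_{i,j}-1}$, which evaluates to $-1$ when $m_{i,j}=2$. For the $S$-sign, this is precisely the even case of Theorem~\ref{thm:A_Sbraid}: with $m_{i,j}=2$ the factor $(-1)^{m_{i,j}/2}$ equals $(-1)^1=-1$, so replacing a factor $s_is_j$ by $s_js_i$ inside any word multiplies $\sigma$ by $-1$.

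Next, I would invoke the assumption that $u$ and $v$ are connected by a sequence of exactly $k$ commutations in $\Red(w)$, say $u = w^{(0)}, w^{(1)}, \ldots, w^{(k)} = v$, where each consecutive pair differs by one braid move of length $2$. By the previous paragraph, $\sigma(w^{(i)}) = -\sigma(w^{(i-1)})$ and $\tau(w^{(i)}) = -\tau(w^{(i-1)})$ for each $i$, and iterating gives $\sigma(u) = (-1)^k \sigma(v)$ and $\tau(u) = (-1)^k \tau(v)$.

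There is essentially no obstacle here: the content of the corollary is packaged into Theorem~\ref{thm:A_Sbraid} and the defining property of $\tau$. The only minor point worth flagging is well-definedness, namely that the parity $(-1)^k$ does not depend on the chosen sequence of commutations joining $u$ to $v$ in $\G^{\comm}(w)$; but this is automatic from Lemma~\ref{lem:megabi}, which guarantees that the relevant minors of $\G(w)$ are bipartite and therefore that any two paths between the same endpoints have the same parity of edge length.
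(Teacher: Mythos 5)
Your proposal is correct and matches the paper's (implicit) argument: the corollary is an immediate consequence of the even case of Theorem~\ref{thm:A_Sbraid} with $m_{i,j}=2$, giving the factor $(-1)^{m_{i,j}/2}=-1$ for $\sigma$, together with the defining relation $\tau(w)=(-1)^{m_{i,j}-1}\tau(w')$ for $\tau$, iterated along the chain of $k$ commutations. Your added remark on the parity being path-independent via the bipartiteness in Lemma~\ref{lem:megabi} is a harmless (and correct) bonus, though the stated corollary already fixes $k$.
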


\subsection{$S$-sign functions on reduced expressions for small rank Coxeter groups}

\begin{example}[Dihedral Group $I_2(m)$]
Let $W=I_2(m)$, with $m\geq 2$.
The $S$-sign function for the reduced expressions is determined by the residue of $m\ \mathrm{mod}\ 4$, see Figure~\ref{fig:I2_Ssign}.
\end{example}

\begin{figure}[H]
\begin{center}
\begin{tikzpicture}[vertex/.style={inner sep=1pt,circle,draw=black,thick},
	    deux/.style={thick,blue},
	    trois/.style={thick,dashed,red},
	    quatre/.style={thick,blue,double,double distance=1mm}]

\node at (-4.5,1) {$m\equiv 2\mod 4$:};

\node[vertex,label=left:$(s_1s_2)^{\frac{m}{2}}$] (s1) at (-5,0) {};
\node[vertex,label=right:${(s_2s_1)^{\frac{m}{2}}}$] (s2) at (-4,0) {};

\draw[deux] (s1) -- node[midway,label=above:$-$] {} (s2);

\node at (0,1) {$m\equiv 0\mod 4$:};

\node[vertex,label=left:$(s_1s_2)^{\frac{m}{2}}$] (t1) at (-0.5,0) {};
\node[vertex,label=right:${(s_2s_1)^{\frac{m}{2}}}$] (t2) at (0.5,0) {};

\draw[quatre] (t1) -- node[midway,label=above:$+$] {} (t2);

\node at (5,1) {$m\equiv 1,3\mod 4$:};

\node[vertex,label=left:$(s_1s_2)^{\lfloor\frac{m}{2}\rfloor} s_1$] (u1) at (4.5,0) {};
\node[vertex,label=right:$(s_2s_1)^{\lfloor\frac{m}{2}\rfloor} s_2$] (u2) at (5.5,0) {};

\draw[trois] (u1) -- node[midway,label=above:$+$] {} (u2);

\end{tikzpicture}
\end{center}
\caption{The $S$-sign for reduced expressions of $\wo$ for the dihedral group $I_2(m)$. Since the $S$-sign values vary within the same residue class, we label
the edge by the product of the $S$-signs of its vertices which is invariant.}
\label{fig:I2_Ssign}
\end{figure}

\begin{example}
Let $W=B_3$.
The $S$-sign function on reduced expressions of $\wo$ is illustrated in Figure~\ref{fig:B3_Ssign}.
The $S$-sign does not change on all braid moves of length $3$, hence in this case, the $S$-sign is a well-defined class function on braid
classes $\G^{\even}(\wo)=\G^{\braid}(\wo)$.
Nevertheless, it is not equal to the $T$-sign function on $\G^{\even}(\wo)$ since the $T$-sign function changes along braid moves of length~$4$.

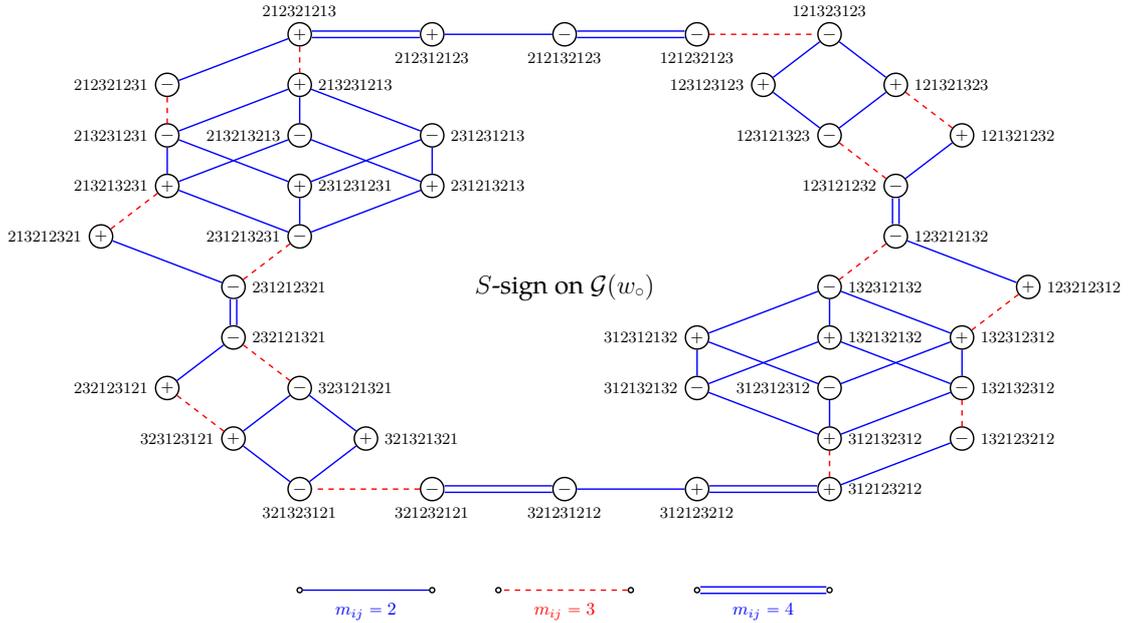
\begin{figure}[H]
\resizebox{\hsize}{!}{
\begin{tikzpicture}[vertex/.style={inner sep=1pt,circle,draw=black,thick},
	    deux/.style={thick,blue},
	    trois/.style={thick,dashed,red},
	    quatre/.style={thick,blue,double,double distance=1mm}]

\def\lgx{2.6}
\def\lgy{1}

\coordinate (center1) at (0,0);
\node at (center1) {{\LARGE $S$-sign on $\G(\wo)$}};

\node[vertex] (a) at (-2*\lgx,-6*\lgy) {};
\node[vertex] (b) at (-1*\lgx,-6*\lgy) {} edge[deux] node[midway,label=below:${m_{ij}=2}$] {} (a);

\node[vertex] (c) at (-0.5*\lgx,-6*\lgy) {};
\node[vertex] (d) at (0.5*\lgx,-6*\lgy) {} edge[trois] node[midway,label=below:${m_{ij}=3}$] {} (c);

\node[vertex] (e) at (1*\lgx,-6*\lgy) {};
\node[vertex] (f) at (2*\lgx,-6*\lgy) {} edge[quatre] node[midway,label=below:${m_{ij}=4}$] {} (e);

\node[vertex,label=right:{\small $312132312$}] (r_cu_0) at (2*\lgx,-3*\lgy) {$+$};
\node[vertex,label=left :{\small $312132132$}] (r_cu_11) at (\lgx,-2*\lgy) {$-$}
edge[deux] (r_cu_0);
\node[vertex,label=left :{\small $312312312$}] (r_cu_12) at (2*\lgx,-2*\lgy) {$-$}
edge[deux] (r_cu_0);
\node[vertex,label=right:{\small $132132312$}] (r_cu_13) at (3*\lgx,-2*\lgy) {$-$}
edge[deux] (r_cu_0);
\node[vertex,label=left :{\small $312312132$}] (r_cu_21) at (\lgx,-1*\lgy) {$+$}
edge[deux] (r_cu_11)
edge[deux] (r_cu_12);
\node[vertex,label=right:{\small $132132132$}] (r_cu_22) at (2*\lgx,-1*\lgy) {$+$}
edge[deux] (r_cu_11)
edge[deux] (r_cu_13);
\node[vertex,label=right:{\small $132312312$}] (r_cu_23) at (3*\lgx,-1*\lgy) {$+$}
edge[deux] (r_cu_12)
edge[deux] (r_cu_13);
\node[vertex,label=right:{\small $132312132$}] (r_cu_3) at (2*\lgx,0) {$-$}
edge[deux] (r_cu_21)
edge[deux] (r_cu_22)
edge[deux] (r_cu_23);
\node[vertex,label=right:{\small $123212132$}] (r_cu_41) at (2.5*\lgx,1) {$-$}
edge[trois] (r_cu_3);
\node[vertex,label=right:{\small $123212312$}] (r_cu_32) at (3.5*\lgx,0) {$+$}
edge[trois] (r_cu_23)
edge[deux]  (r_cu_41);
\node[vertex,label=right:{\small $132123212$}] (r_cu_02) at (3*\lgx,-3) {$-$}
edge[trois] (r_cu_13);
\node[vertex,label=right:{\small $312123212$}] (r_cu_m1) at (2*\lgx,-4) {$+$}
edge[deux] (r_cu_02)
edge[trois]  (r_cu_0);

\node[vertex,label=left:{\small $123121232$}] (r_sq_m1)  at (2.5*\lgx,2*\lgy) {$-$}
edge[quatre] (r_cu_41);
\node[vertex,label=left:{\small $123121323$}] (r_sq_0)  at (2*\lgx,3*\lgy) {$-$}
edge[trois] (r_sq_m1);
\node[vertex,label=right:{\small $121321232$}] (r_sq_02)  at (3*\lgx,3*\lgy) {$+$}
edge[deux] (r_sq_m1);
\node[vertex,label=left :{\small $123123123$}] (r_sq_11)  at (1.5*\lgx,4*\lgy) {$+$}
edge[deux] (r_sq_0);
\node[vertex,label=right:{\small $121321323$}] (r_sq_12)  at (2.5*\lgx,4*\lgy) {$+$}
edge[deux] (r_sq_0)
edge[trois] (r_sq_02);
\node[vertex,label=above:{\small $121323123$}] (r_sq_2)  at (2*\lgx,5*\lgy) {$-$}
edge[deux] (r_sq_11)
edge[deux] (r_sq_12);

\node[vertex,label=right:{\small $213231213$}] (l_cu_0)  at (-2*\lgx,4*\lgy) {$+$};
\node[vertex,label=right:{\small $231231213$}] (l_cu_11) at (-\lgx,3*\lgy) {$-$}
edge[deux] (l_cu_0);
\node[vertex,label=left :{\small $213213213$}] (l_cu_12) at (-2*\lgx,3*\lgy) {$-$}
edge[deux] (l_cu_0);
\node[vertex,label=left:{\small $213231231$}] (l_cu_13) at (-3*\lgx,3*\lgy) {$-$}
edge[deux] (l_cu_0);
\node[vertex,label=right:{\small $231213213$}] (l_cu_21) at (-\lgx,2*\lgy) {$+$}
edge[deux] (l_cu_11)
edge[deux] (l_cu_12);
\node[vertex,label=right:{\small $231231231$}] (l_cu_22) at (-2*\lgx,2*\lgy) {$+$}
edge[deux] (l_cu_11)
edge[deux] (l_cu_13);
\node[vertex,label=left :{\small $213213231$}] (l_cu_23) at (-3*\lgx,2*\lgy) {$+$}
edge[deux] (l_cu_12)
edge[deux] (l_cu_13);
\node[vertex,label=left :{\small $231213231$}] (l_cu_3)  at (-2*\lgx,1) {$-$}
edge[deux] (l_cu_21)
edge[deux] (l_cu_22)
edge[deux] (l_cu_23);
\node[vertex,label=right:{\small $231212321$}] (l_cu_41) at (-2.5*\lgx,0) {$-$}
edge[trois] (l_cu_3);
\node[vertex,label=left :{\small $213212321$}] (l_cu_32) at (-3.5*\lgx,1) {$+$}
edge[trois] (l_cu_23)
edge[deux]  (l_cu_41);
\node[vertex,label=left :{\small $212321231$}] (l_cu_02) at (-3*\lgx,4) {$-$}
edge[trois] (l_cu_13);
\node[vertex,label=above:{\small $212321213$}] (l_cu_m1) at (-2*\lgx,5) {$+$}
edge[deux] (l_cu_02)
edge[trois]  (l_cu_0);

\node[vertex,label=right:{\small $323121321$}] (l_sq_0)  at (-2*\lgx,-2*\lgy) {$-$};
\node[vertex,label=left :{\small $323123121$}] (l_sq_11)  at (-2.5*\lgx,-3*\lgy) {$+$}
edge[deux] (l_sq_0);
\node[vertex,label=right:{\small $321321321$}] (l_sq_12)  at (-1.5*\lgx,-3*\lgy) {$+$}
edge[deux] (l_sq_0);
\node[vertex,label=below:{\small $321323121$}] (l_sq_22)  at (-2*\lgx,-4*\lgy) {$-$}
edge[deux] (l_sq_11)
edge[deux] (l_sq_12);
\node[vertex,label=left:{\small $232123121$}] (l_sq_02)  at (-3*\lgx,-2*\lgy) {$+$}
edge[trois] (l_sq_11);
\node[vertex,label=right:{\small $232121321$}] (l_sq_m1)  at (-2.5*\lgx,-1*\lgy) {$-$}
edge[quatre] (l_cu_41)
edge[trois] (l_sq_0)
edge[deux] (l_sq_02);

\node[vertex,label=below:{\small $321232121$}] (b_1)  at (-1*\lgx,-4*\lgy) {$-$}
edge[trois] (l_sq_22);
\node[vertex,label=below:{\small $321231212$}] (b_2)  at (-0*\lgx,-4*\lgy) {$-$}
edge[quatre] (b_1);
\node[vertex,label=below:{\small $312123212$}] (b_3)  at (1*\lgx,-4*\lgy) {$+$}
edge[deux] (b_2)
edge[quatre] (r_cu_m1);

\node[vertex,label=below:{\small $212312123$}] (u_1)  at (-1*\lgx,5*\lgy) {$+$}
edge[quatre] (l_cu_m1);
\node[vertex,label=below:{\small $212132123$}] (u_2)  at (-0*\lgx,5*\lgy) {$-$}
edge[deux] (u_1);
\node[vertex,label=below:{\small $121232123$}] (u_3)  at (1*\lgx,5*\lgy) {$-$}
edge[quatre] (u_2)
edge[trois] (r_sq_2);

\end{tikzpicture}
}
\caption{The $S$-sign function on reduced words of $\wo$ in type $B_3$}
\label{fig:B3_Ssign}
\end{figure}
\end{example}

\begin{example}[Symmetric group $\Sym_4=A_3$]
\label{ex:s_sign_a3}
Let $W=A_3$.
The $S$-sign function for the reduced expressions of $\wo$ \emph{is not} equal to the $T$-sign, see Figure~\ref{fig:A3_Ssign}.
Consider the underlined braid move of length $3$ between $\underline{\smash{s_1s_2s_1}}s_3s_2s_1$ and $\underline{\smash{s_2s_1s_2}}s_3s_2s_1$.
On the one hand, the $S$-sign changes because in this case, we compute $\kappa=0$ and $\mu=1$ in Theorem~\ref{thm:A_Sbraid}.
On the other hand, the $T$-sign does not change since~$3$ is odd.

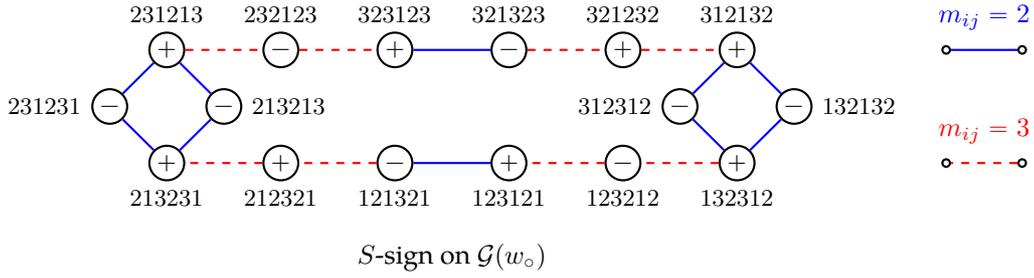
\begin{figure}[H]
\begin{center}
\begin{tikzpicture}[vertex/.style={inner sep=1pt,circle,draw=black,thick},
	    deux/.style={thick,blue},
	    trois/.style={thick,dashed,red}]

\coordinate (center1) at (0,-2);
\node at (center1) {$S$-sign on $\G(\wo)$};

\node[vertex,label=below:{\small $123121$}] (123121) at (0.75,-0.75) {$+$};
\node[vertex,label=below:{\small $123212$}] (123212) at (2.25,-0.75) {$-$} edge[trois] (123121);
\node[vertex,label=below:{\small $132312$}] (132312) at (3.75,-0.75) {$+$} edge[trois] (123212);
\node[vertex,label=above:{\small $312132$}] (312132) at (3.75,0.75) {$+$};
\node[vertex,label=above:{\small $321232$}] (321232) at (2.25,0.75) {$+$} edge[trois] (312132);
\node[vertex,label=above:{\small $321323$}] (321323) at (0.75,0.75) {$-$} edge[trois] (321232);
\node[vertex,label=above:{\small $323123$}] (323123) at (-0.75,0.75) {$+$} edge[deux] (321323);
\node[vertex,label=above:{\small $232123$}] (232123) at (-2.25,0.75) {$-$} edge[trois] (323123);
\node[vertex,label=above:{\small $231213$}] (231213) at (-3.75,0.75) {$+$} edge[trois] (232123);
\node[vertex,label=below:{\small $213231$}] (213231) at (-3.75,-0.75) {$+$};
\node[vertex,label=below:{\small $212321$}] (212321) at (-2.25,-0.75) {$+$} edge[trois] (213231);
\node[vertex,label=below:{\small $121321$}] (121321) at (-0.75,-0.75) {$-$} edge[trois] (212321) edge[deux] (123121);

\node[vertex,label=left:{\small $312312$}] (312312) at (3,0) {$-$} edge[deux] (132312) edge[deux] (312132);
\node[vertex,label=right:{\small $132132$}] (132132) at (4.5,0) {$-$} edge[deux] (132312) edge[deux] (312132);
\node[vertex,label=right:{\small $213213$}] (213213) at (-3,0) {$-$} edge[deux] (213231) edge[deux] (231213);
\node[vertex,label=left:{\small $231231$}] (231231) at (-4.5,0) {$-$} edge[deux] (213231) edge[deux] (231213);

\node[vertex] (a) at (6.5,0.75) {};
\node[vertex] (b) at (7.5,0.75) {} edge[deux] node[midway,label=above:${m_{ij}=2}$] {} (a);

\node[vertex] (a) at (6.5,-0.75) {};
\node[vertex] (b) at (7.5,-0.75) {} edge[trois] node[midway,label=above:${m_{ij}=3}$] {} (a);
\end{tikzpicture}
\end{center}
\caption{The $S$-sign for reduced expressions of $\wo$ in type $A_3$}
\label{fig:A3_Ssign}
\end{figure}
\end{example}

\subsection{The punctual sign function}
\label{ssec:st_sign}

The existence of geodesic realizations of subword complexes requires the existence of the $T$-sign function, as first observed in
\cite[Proposition~3.4]{ceballos_associahedra_2012}.
As we have seen in Proposition~\ref{prop:signature_matrix}, the $T$-sign prescribes the orientation of dual simplicial cones in order to get a complete simplicial fan.
The $S$-sign further takes care of the intrinsic ordering related to a word and contributes to determine the orientation of dual simplicial cones. 
We investigate these relations further in Sections~\ref{sec:model_mat} and~\ref{sec:parameter_sign_mat}.
Here, we introduce the \emph{punctual sign function} which considers both signs and give some examples.

\begin{definition}[Punctual sign function $\punc$\ \footnote{By multiplying the sign functions $S$ and $T$, we see the abbreviation ``s.t.'' (lat. {\it sine tempore}), which, in german academic culture, describes events starting punctually. 
We suggest to pronounce the hourglass symbol $\punc(w)$ as ``clock of~$w$''.}]
The \defn{punctual sign function}~$\punc$ is defined
as 
\[
\begin{split}
\punc:\Red(\wo) & \rightarrow \{+1,-1\}\\
w & \mapsto \sigma(w)\cdot\tau(w),
\end{split}
\]
where $\sigma$ is the $S$-sign function on words in $S^*$ and $\tau$ is the $T$-sign function on reduced words~$\Red(\wo)$.
\end{definition}

Since $T$ is defined up to a global multiplication by ``$-1$'', the punctual sign function is also well-defined up to a global multiplication by ``$-1$''.
For this reason, we henceforth fix the $T$-sign of the lexicographically first reduced subword of $\wo$ occuring in $(s_1\cdots s_n)^\infty$ to have positive sign. 
The definition of product of sign functions allows to interpret the values of the $\punc$-sign function: it is positive when the $S$ and $T$ functions are equal, and negative otherwise.
Further, its behavior along braid moves is determined as follows.
Set $w=ub_{i,j}v$ and $w'=ub_{j,i}v$ with $\ell(b_{i,j})=m_{i,j}$ as in Theorem~\ref{thm:A_Sbraid}, then
\[
\punc(w) = 
\begin{cases}
\punc(w') & \text{ if } m_{i,j}\equiv 2\mod 4, \\
-\punc(w') & \text{ if } m_{i,j}\equiv 0\mod 4, \\
(-1)^{\kappa+\mu}\punc(w') & \text{ if } m_{i,j}\equiv 1 \text{ or } 3\mod 4. \\
\end{cases}
\]
Although this definition using modular arithmetic gives some insight on $\punc$, is it not clear whether there is a combinatorial interpretation of $\punc$ not stemming from $\sigma$ and $\tau$.

\begin{example}[Dihedral Group $I_2(m)$]
Let $W=I_2(m)$, with $m\geq 2$.
The punctual sign function for the reduced expressions of $\wo$ is determined by the residue of $m\ \mathrm{mod}\ 4$, see Figure~\ref{fig:I2_STsign}.

\begin{figure}[H]
\begin{center}
\begin{tikzpicture}[vertex/.style={inner sep=1pt,circle,draw=black,thick},
	    deux/.style={thick,blue},
	    trois/.style={thick,dashed,red},
	    quatre/.style={thick,blue,double,double distance=1mm}]

\node at (-4.5,1) {$m\equiv 2\mod 4$};

\node[vertex,label=left:$(s_1s_2)^{\frac{m}{2}}$] (s1) at (-5,0) {};
\node[vertex,label=right:${(s_2s_1)^{\frac{m}{2}}}$] (s2) at (-4,0) {};

\draw[deux] (s1) -- node[midway,label=above:$+$] {} (s2);

\node at (0,1) {$m\equiv 0\mod 4$};

\node[vertex,label=left:$(s_1s_2)^{\frac{m}{2}}$] (t1) at (-0.5,0) {};
\node[vertex,label=right:${(s_2s_1)^{\frac{m}{2}}}$] (t2) at (0.5,0) {};

\draw[quatre] (t1) -- node[midway,label=above:$-$] {} (t2);

\node at (5,1) {$m\equiv 1,3\mod 4$};

\node[vertex,label=left:$(s_1s_2)^{\lfloor\frac{m}{2}\rfloor} s_1$] (u1) at (4.5,0) {};
\node[vertex,label=right:$(s_2s_1)^{\lfloor\frac{m}{2}\rfloor} s_2$] (u2) at (5.5,0) {};

\draw[trois] (u1) -- node[midway,label=above:$+$] {} (u2);

\end{tikzpicture}
\end{center}
\caption{The punctual signs for reduced expressions of the dihedral group $I_2(m)$. Since the punctual sign values varies within the same residue class, we label
the edge by the product of the punctual signs of its vertices which is invariant.}
\label{fig:I2_STsign}
\end{figure}
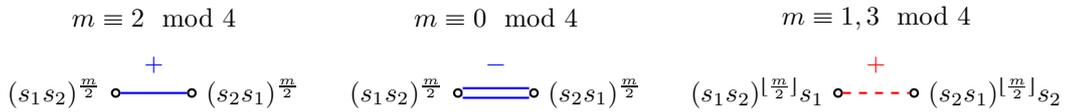
\end{example}

\begin{example}[Symmetric group $\Sym_4=A_3$]
\label{ex:punc_a3}
Let $W=A_3$.
The punctual sign function is illustrated in Figure~\ref{fig:A3_STsign}.
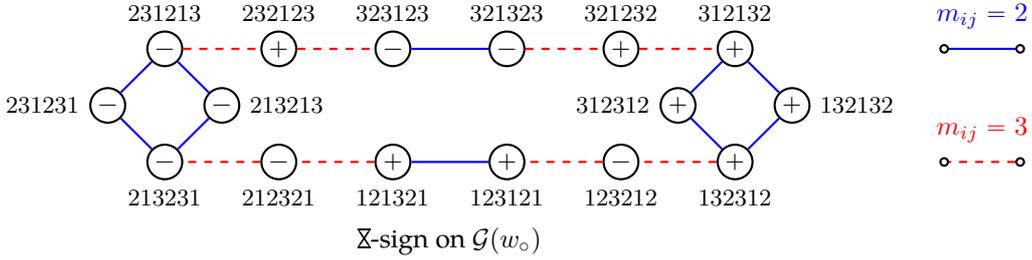
\begin{figure}[H]
\begin{center}
\begin{tikzpicture}[vertex/.style={inner sep=1pt,circle,draw=black,thick},
	    deux/.style={thick,blue},
	    trois/.style={thick,dashed,red}]

\coordinate (center1) at (0,-1.8);
\node at (center1) {$\punc$-sign on $\G(\wo)$};

\node[vertex,label=below:{\small $123121$}] (123121) at (0.75,-0.75) {$+$};
\node[vertex,label=below:{\small $123212$}] (123212) at (2.25,-0.75) {$-$} edge[trois] (123121);
\node[vertex,label=below:{\small $132312$}] (132312) at (3.75,-0.75) {$+$} edge[trois] (123212);
\node[vertex,label=above:{\small $312132$}] (312132) at (3.75,0.75) {$+$};
\node[vertex,label=above:{\small $321232$}] (321232) at (2.25,0.75) {$+$} edge[trois] (312132);
\node[vertex,label=above:{\small $321323$}] (321323) at (0.75,0.75) {$-$} edge[trois] (321232);
\node[vertex,label=above:{\small $323123$}] (323123) at (-0.75,0.75) {$-$} edge[deux] (321323);
\node[vertex,label=above:{\small $232123$}] (232123) at (-2.25,0.75) {$+$} edge[trois] (323123);
\node[vertex,label=above:{\small $231213$}] (231213) at (-3.75,0.75) {$-$} edge[trois] (232123);
\node[vertex,label=below:{\small $213231$}] (213231) at (-3.75,-0.75) {$-$};
\node[vertex,label=below:{\small $212321$}] (212321) at (-2.25,-0.75) {$-$} edge[trois] (213231);
\node[vertex,label=below:{\small $121321$}] (121321) at (-0.75,-0.75) {$+$} edge[trois] (212321) edge[deux] (123121);

\node[vertex,label=left:{\small $312312$}] (312312) at (3,0) {$+$} edge[deux] (132312) edge[deux] (312132);
\node[vertex,label=right:{\small $132132$}] (132132) at (4.5,0) {$+$} edge[deux] (132312) edge[deux] (312132);
\node[vertex,label=right:{\small $213213$}] (213213) at (-3,0) {$-$} edge[deux] (213231) edge[deux] (231213);
\node[vertex,label=left:{\small $231231$}] (231231) at (-4.5,0) {$-$} edge[deux] (213231) edge[deux] (231213);

\node[vertex] (a) at (6.5,0.75) {};
\node[vertex] (b) at (7.5,0.75) {} edge[deux] node[midway,label=above:${m_{ij}=2}$] {} (a);

\node[vertex] (a) at (6.5,-0.75) {};
\node[vertex] (b) at (7.5,-0.75) {} edge[trois] node[midway,label=above:${m_{ij}=3}$] {} (a);
\end{tikzpicture}
\end{center}
\caption{The punctual sign function for reduced expressions of the group $A_3$}
\label{fig:A3_STsign}
\end{figure}
\end{example}

\begin{example}[Hyperoctahedral group $B_3$]
Let $W=B_3$.
The punctual sign function is illustrated in Figure~\ref{fig:B3_STsign}.
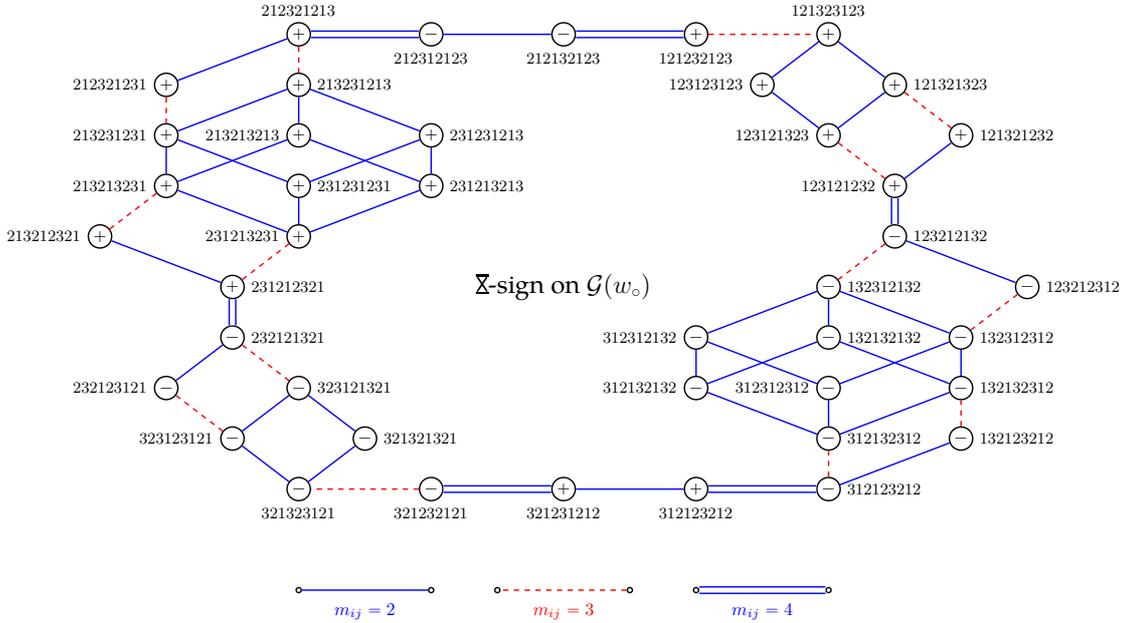
\begin{figure}[!ht]
\resizebox{1\hsize}{!}{
\begin{tikzpicture}[vertex/.style={inner sep=1pt,circle,draw=black,thick},
	    deux/.style={thick,blue},
	    trois/.style={thick,dashed,red},
	    quatre/.style={thick,blue,double,double distance=1mm}]

\def\lgx{2.6}
\def\lgy{1}

\coordinate (center1) at (0,0);
\node at (center1) {{\LARGE $\punc$-sign on $\G(\wo)$}};

\node[vertex] (a) at (-2*\lgx,-6*\lgy) {};
\node[vertex] (b) at (-1*\lgx,-6*\lgy) {} edge[deux] node[midway,label=below:${m_{ij}=2}$] {} (a);

\node[vertex] (c) at (-0.5*\lgx,-6*\lgy) {};
\node[vertex] (d) at (0.5*\lgx,-6*\lgy) {} edge[trois] node[midway,label=below:${m_{ij}=3}$] {} (c);

\node[vertex] (e) at (1*\lgx,-6*\lgy) {};
\node[vertex] (f) at (2*\lgx,-6*\lgy) {} edge[quatre] node[midway,label=below:${m_{ij}=4}$] {} (e);

\node[vertex,label=right:{\small $312132312$}] (r_cu_0) at (2*\lgx,-3*\lgy) {$-$};
\node[vertex,label=left :{\small $312132132$}] (r_cu_11) at (\lgx,-2*\lgy) {$-$}
edge[deux] (r_cu_0);
\node[vertex,label=left :{\small $312312312$}] (r_cu_12) at (2*\lgx,-2*\lgy) {$-$}
edge[deux] (r_cu_0);
\node[vertex,label=right:{\small $132132312$}] (r_cu_13) at (3*\lgx,-2*\lgy) {$-$}
edge[deux] (r_cu_0);
\node[vertex,label=left :{\small $312312132$}] (r_cu_21) at (\lgx,-1*\lgy) {$-$}
edge[deux] (r_cu_11)
edge[deux] (r_cu_12);
\node[vertex,label=right:{\small $132132132$}] (r_cu_22) at (2*\lgx,-1*\lgy) {$-$}
edge[deux] (r_cu_11)
edge[deux] (r_cu_13);
\node[vertex,label=right:{\small $132312312$}] (r_cu_23) at (3*\lgx,-1*\lgy) {$-$}
edge[deux] (r_cu_12)
edge[deux] (r_cu_13);
\node[vertex,label=right:{\small $132312132$}] (r_cu_3) at (2*\lgx,0) {$-$}
edge[deux] (r_cu_21)
edge[deux] (r_cu_22)
edge[deux] (r_cu_23);
\node[vertex,label=right:{\small $123212132$}] (r_cu_41) at (2.5*\lgx,1) {$-$}
edge[trois] (r_cu_3);
\node[vertex,label=right:{\small $123212312$}] (r_cu_32) at (3.5*\lgx,0) {$-$}
edge[trois] (r_cu_23)
edge[deux]  (r_cu_41);
\node[vertex,label=right:{\small $132123212$}] (r_cu_02) at (3*\lgx,-3) {$-$}
edge[trois] (r_cu_13);
\node[vertex,label=right:{\small $312123212$}] (r_cu_m1) at (2*\lgx,-4) {$-$}
edge[deux] (r_cu_02)
edge[trois]  (r_cu_0);

\node[vertex,label=left:{\small $123121232$}] (r_sq_m1)  at (2.5*\lgx,2*\lgy) {$+$}
edge[quatre] (r_cu_41);
\node[vertex,label=left:{\small $123121323$}] (r_sq_0)  at (2*\lgx,3*\lgy) {$+$}
edge[trois] (r_sq_m1);
\node[vertex,label=right:{\small $121321232$}] (r_sq_02)  at (3*\lgx,3*\lgy) {$+$}
edge[deux] (r_sq_m1);
\node[vertex,label=left :{\small $123123123$}] (r_sq_11)  at (1.5*\lgx,4*\lgy) {$+$}
edge[deux] (r_sq_0);
\node[vertex,label=right:{\small $121321323$}] (r_sq_12)  at (2.5*\lgx,4*\lgy) {$+$}
edge[deux] (r_sq_0)
edge[trois] (r_sq_02);
\node[vertex,label=above:{\small $121323123$}] (r_sq_2)  at (2*\lgx,5*\lgy) {$+$}
edge[deux] (r_sq_11)
edge[deux] (r_sq_12);

\node[vertex,label=right:{\small $213231213$}] (l_cu_0)  at (-2*\lgx,4*\lgy) {$+$};
\node[vertex,label=right:{\small $231231213$}] (l_cu_11) at (-\lgx,3*\lgy) {$+$}
edge[deux] (l_cu_0);
\node[vertex,label=left :{\small $213213213$}] (l_cu_12) at (-2*\lgx,3*\lgy) {$+$}
edge[deux] (l_cu_0);
\node[vertex,label=left:{\small $213231231$}] (l_cu_13) at (-3*\lgx,3*\lgy) {$+$}
edge[deux] (l_cu_0);
\node[vertex,label=right:{\small $231213213$}] (l_cu_21) at (-\lgx,2*\lgy) {$+$}
edge[deux] (l_cu_11)
edge[deux] (l_cu_12);
\node[vertex,label=right:{\small $231231231$}] (l_cu_22) at (-2*\lgx,2*\lgy) {$+$}
edge[deux] (l_cu_11)
edge[deux] (l_cu_13);
\node[vertex,label=left :{\small $213213231$}] (l_cu_23) at (-3*\lgx,2*\lgy) {$+$}
edge[deux] (l_cu_12)
edge[deux] (l_cu_13);
\node[vertex,label=left :{\small $231213231$}] (l_cu_3)  at (-2*\lgx,1) {$+$}
edge[deux] (l_cu_21)
edge[deux] (l_cu_22)
edge[deux] (l_cu_23);
\node[vertex,label=right:{\small $231212321$}] (l_cu_41) at (-2.5*\lgx,0) {$+$}
edge[trois] (l_cu_3);
\node[vertex,label=left :{\small $213212321$}] (l_cu_32) at (-3.5*\lgx,1) {$+$}
edge[trois] (l_cu_23)
edge[deux]  (l_cu_41);
\node[vertex,label=left :{\small $212321231$}] (l_cu_02) at (-3*\lgx,4) {$+$}
edge[trois] (l_cu_13);
\node[vertex,label=above:{\small $212321213$}] (l_cu_m1) at (-2*\lgx,5) {$+$}
edge[deux] (l_cu_02)
edge[trois]  (l_cu_0);

\node[vertex,label=right:{\small $323121321$}] (l_sq_0)  at (-2*\lgx,-2*\lgy) {$-$};
\node[vertex,label=left :{\small $323123121$}] (l_sq_11)  at (-2.5*\lgx,-3*\lgy) {$-$}
edge[deux] (l_sq_0);
\node[vertex,label=right:{\small $321321321$}] (l_sq_12)  at (-1.5*\lgx,-3*\lgy) {$-$}
edge[deux] (l_sq_0);
\node[vertex,label=below:{\small $321323121$}] (l_sq_22)  at (-2*\lgx,-4*\lgy) {$-$}
edge[deux] (l_sq_11)
edge[deux] (l_sq_12);
\node[vertex,label=left:{\small $232123121$}] (l_sq_02)  at (-3*\lgx,-2*\lgy) {$-$}
edge[trois] (l_sq_11);
\node[vertex,label=right:{\small $232121321$}] (l_sq_m1)  at (-2.5*\lgx,-1*\lgy) {$-$}
edge[quatre] (l_cu_41)
edge[trois] (l_sq_0)
edge[deux] (l_sq_02);

\node[vertex,label=below:{\small $321232121$}] (b_1)  at (-1*\lgx,-4*\lgy) {$-$}
edge[trois] (l_sq_22);
\node[vertex,label=below:{\small $321231212$}] (b_2)  at (-0*\lgx,-4*\lgy) {$+$}
edge[quatre] (b_1);
\node[vertex,label=below:{\small $312123212$}] (b_3)  at (1*\lgx,-4*\lgy) {$+$}
edge[deux] (b_2)
edge[quatre] (r_cu_m1);

\node[vertex,label=below:{\small $212312123$}] (u_1)  at (-1*\lgx,5*\lgy) {$-$}
edge[quatre] (l_cu_m1);
\node[vertex,label=below:{\small $212132123$}] (u_2)  at (-0*\lgx,5*\lgy) {$-$}
edge[deux] (u_1);
\node[vertex,label=below:{\small $121232123$}] (u_3)  at (1*\lgx,5*\lgy) {$+$}
edge[quatre] (u_2)
edge[trois] (r_sq_2);

\end{tikzpicture}
}
\caption{The punctual sign function on reduced words of $\wo$ in type $B_3$.}
\label{fig:B3_STsign}
\end{figure}
\end{example}

\section{Model matrices}
\label{sec:model_mat}

In this section, we give a factorization formula for the determinant of matrices
\[
\Big(f_{i,j}(x_j)\Big)_{i,j\in[k]},
\]
where $f_{i,j}(x_j)$ is a polynomial in $\R[x_j]$ based on the Binet--Cauchy formula.
Matrices of this form include the so-called \defn{alternant matrices}.
They were considered already in the XIX$^{\text{th}}$ century, if not earlier, see \cite[Chapter 7, Notes]{stanley_enumerative_1999}.
The case when $f_{i,j}$ does not depend on the index $j$ (equivalently, if interchanging variables is equivalent to permuting the columns) is classical \cite[Chapter~6]{aitken_determinants_1939} and \cite[Chapter~XI]{muir_treatise_1960}.
The Vandermonde matrix is the case when $f_{i,j}(x_j)=x_j^{i-1}$, for ${i,j\in[k]}$.
Since all the columns of the Vandermonde matrix are equal up to change of variables, its determinant is an alternating polynomial in the variables $x_1,\dots, x_k$ with respect to the group action of~$\Sym_k$ by permuting the variable indices.
As Definition~\ref{def:schur} shows, the same holds for the columns of the matrix used to define Schur functions, leading to the fact that its determinant is divisible by the Vandermonde determinant and thus the quotient is a symmetric polynomial in the variables $x_1,\dots,x_k$. 
At the opposite end, if no two columns are equal up to a change of variables, no non-trivial permutation action acts canonically on the determinant.

Having the construction of signature matrices in mind, we are particularly interested in the case when subsets of columns are equal up to a change of variables depending on a reduced word.
When the columns are partitioned into subsets of columns that are equal up to permuting the variable indices, we get a ``partially'' symmetric polynomial expressible as a product of symmetric polynomials: Theorem~\ref{thm:B_det} in Section~\ref{ssec:formula_param} provides an explicit formula for the determinant in Definition~\ref{def:signature_matrix} while Corollary~\ref{cor:D_sign_of_model} reveals the structure of the sign of the determinant using the $S$-sign function.
Additionnally, Example~\ref{ex:dual_cauchy} shows how to recover the dual Cauchy identity from Theorem~\ref{thm:B_det}

\subsection{Definitions}

In order to study matrices with polynomial entries, we define certain tensors.
They allow to dissect the data into smaller pieces, that are then easier to control and analyze as done in Sections~\ref{ssec:vandermonde} and \ref{ssec:partial_schur}.
The variables tensor is used to provide polynomials of degree at most $d-1$ in a $(N\times N)$-matrix: 

\begin{definition}[Variables tensor]
Let $d$ and $N$ be positive integers.
The \defn{variables tensor} $\T^{k,j}{}_{l}(d,N)$ is the tensor in $V_d\otimes V_N \otimes {V_N}^*$ over $\R[x_1,\dots,x_N]$ defined as
\[
\T^{k,j}{}_{l}(d,N):=\sum_{j=1}^N \sum_{l=j}^{j}\left(\sum_{k=1}^{d}x_l^{k-1}\right) \mathbf{e}^k\otimes \mathbf{e}^j \otimes \mathbf{f}_{l}.
\]
\end{definition}

\noindent
The parameter tensor encodes the coefficients of the polynomials that appear in the matrix whose determinant we aim to determine:

\begin{definition}[Parameter tensor]
Let $d$ and $N$ be positive integers and $S$ be an alphabet of cardinality~$n$.
A \defn{parameter tensor} $\PP^{i}{}_{s,k}(N,n,d)$ is a tensor in $V_N\otimes {V_n}^* \otimes {V_d}^*$ over $\R$.
\end{definition}

For practical reasons, we index the rank-$1$ tensors of $V_N\otimes {V_n}^* \otimes {V_d}^*$ with the set $[N] \times S \times \{0,1,\dots,{d-1}\}$.
In particular, the columns of a parameter tensor are indexed by couples in $S\times \{0,1,\dots,{d-1}\}$.

\begin{definition}[Coefficients tensor of a word]
Let $d\geq 1$, $v=v_1v_2\dots v_N$ be a word in~$S^*$, and $\PP=(p^i{}_{s,j})_{(i,s,j)\in [N]\times S\times \{0,\dots,d-1\}}$ be a parameter tensor.
The \defn{coefficients tensor} of~$v$ with respect to~$\PP$ is the tensor in $V_N\otimes {V_N}^* \otimes {V_{d}}^*$ defined as

\[
\C^i{}_{j,k}(v,\PP):=\sum_{i=1}^{N}\sum_{j=1}^N\sum_{k=1}^{d}p^i{}_{v_j,k-1}\ \mathbf{e}^i\otimes \mathbf{f}_j\otimes \mathbf{f}_k.
\]

\end{definition}

Similarly, the columns of a coefficients tensor are indexed by couples in $[N]\times \{0,1,\dots,{d-1}\}$.
Multiplying the coefficients tensor with the variables tensor and flattening the product, we get a matrix that models square matrices where certain groups of columns are equal
up to a relabeling of variables, according to occurrences of letters in the chosen word $v$.

\begin{definition}[Model matrix of a word]
Let $d\geq 1$, $v=v_1v_2\dots v_N$ be a word in $S^*$, and $\PP=(p^i{}_{s,j})_{(i,s,j)\in [N]\times S\times \{0,\dots,d-1\}}$ be a parameter tensor.
Denote by $\R[\PP]$ the real polynomial ring whose variables are the non-zero coefficients of $\PP$ (when considering the entries of $\PP$ are real variables).
The \defn{model matrix} of~$v$ with respect to $\PP$ is the $(N\times N)$-matrix
\[
M^{i}{}_{l}(v,\PP):=\C^i{_{j,k}}(v,\PP)\cdot \T^{k,j}{}_{l}(d,N),
\]
whose entries in column $l$ are polynomials of degree $d-1$ in the variable
$x_l$ with coefficients taken from the parameter tensor $\PP$ with second index $v_l$.
\end{definition}

The entries of $M(v,\PP)$ in column $l$ are polynomials in $(\R[\PP|_{v_l}])[x_l]$, where $\PP|_{v_l}$ restricts $\PP$ to the subtensor indexed by $v_l$.
Further, whenever $v_i=v_j$ and $i\neq j$, the columns $i$ and $j$ are equal up to relabeling their variables.
We have already seen such examples: the Vandermonde matrix in Section~\ref{ssec:vandermonde}, and in Example~\ref{ex:partition}.
Here is another example that we examine further later on.

\begin{example}
\label{ex:model4}
Consider the matrix
\[
M = 
\left(\begin{array}{rrrr}
1 & 0 & 1 & 0 \\
0 & 1 & 0 & 1 \\
-x_{1} & x_{2} & -x_{3} & x_{4} \\
x_{1}^{2} & -x_{2}^{2} & x_{3}^{2} & -x_{4}^{2}
\end{array}\right).
\]
In this case, $d-1=2$, $N=4$, and $v=s_1s_2s_1s_2$.
The corresponding parameter tensor is
\[
\PP:=   (\mathbf{e}^1 \otimes \mathbf{f}_{s_1} \otimes \mathbf{f}_0)
- (\mathbf{e}^3 \otimes \mathbf{f}_{s_1} \otimes \mathbf{f}_1)
+ (\mathbf{e}^4 \otimes \mathbf{f}_{s_1} \otimes \mathbf{f}_2)
+ (\mathbf{e}^2 \otimes \mathbf{f}_{s_2} \otimes \mathbf{f}_0)
+ (\mathbf{e}^3 \otimes \mathbf{f}_{s_2} \otimes \mathbf{f}_1)
- (\mathbf{e}^4 \otimes \mathbf{f}_{s_2} \otimes \mathbf{f}_2).
\]
The matrix can be written using the corresponding coefficients and variables matrices as:
\[
\resizebox{\hsize}{!}{$
M = \bigoplus_{k=1}^2
\overbrace{
\begin{blockarray}{cccccc}
\begin{block}{cccccc}
(s_1,0) & (s_1,1) & (s_1,2) & (s_2,0) & (s_2,1) & (s_2,2) \\ 
\end{block}
\begin{block}{(ccc|ccc)}
1 & 0 & 0 & 0 & 0 & 0 \\
0 & 0 & 0 & 1 & 0 & 0 \\
0 & -1 & 0 & 0 & 1 & 0 \\
0 & 0 & 1 & 0 & 0 & -1 \\
\end{block}
\end{blockarray}}^{\PP}
\hspace{0.2cm}\times
\left(\begin{array}{rrrrrrrrrrrr}
1 & x_{1} & x_{1}^{2} & 0 & 0 & 0 & 0 & 0 & 0 & 0 & 0 & 0 \\
0 & 0 & 0 & 1 & x_{2} & x_{2}^{2} & 0 & 0 & 0 & 0 & 0 & 0 \\
0 & 0 & 0 & 0 & 0 & 0 & 1 & x_{3} & x_{3}^{2} & 0 & 0 & 0 \\
0 & 0 & 0 & 0 & 0 & 0 & 0 & 0 & 0 & 1 & x_{4} & x_{4}^{2} \\
\end{array}\right)^{\top},$}
\]
where the $\oplus$ indicates columnwise concatenation of matrices.
\end{example}

\subsection{Model matrices for reduced words}

For the remainder of Section~\ref{sec:model_mat}, we present the results with the combinatorics of Coxeter groups in mind.
The general result about the factorization of determinants of matrices of polynomials can be deduced directly by removing the restrictions coming from the Coxeter group in play.
We remind the reader of the definitions of the following objects used throughout this section. 

\begin{center}
\begin{tabular}{rcl}
$(W,S)$ & := & a finite irreducible Coxeter system, \\
$n$ & := & $\# S$, the cardinality of $S$, \\
$N$ & := & $\ell(\wo)$, the length of the longest element,\\
$\nu$ & := & the h\"ochstfrequenz of $W$, defined in Section~\ref{ssec:cox_groups} on page~\pageref{def:hf}.
\end{tabular}
\end{center}

\begin{definition}[Variables tensor of a Coxeter system]
The \defn{variables tensor} of $(W,S)$ is the variables tensor $\T_W:=\T^{k,j}{}_{l}(\nu,N)$.
\end{definition}

\begin{definition}[Model matrix of a reduced word]
Let $\PP=(p^i{}_{s,j})_{(i,s,j)\in [N]\times S\times \{0,\dots,\nu-1\}}$ be a parameter tensor and $v=v_1v_2\dots v_N\in\Red(\wo)$.
The \defn{model matrix} of $v$ with respect to $\PP$ is the $(N\times N)$-matrix
\[
M^{i}{}_{l}(v,\PP):=\C^i{_{j,k}}(v,\PP)\cdot \T_W.
\]
\end{definition}

\begin{example}[Symmetric group $\Sym_3=A_2$]
\label{ex:model_mat_a2}
We have $n=2$, $\nu=2$, $\Red(\wo)=\{s_1s_2s_1,s_2s_1s_2\}$, and $N=3$.
The model matrix for $s_1s_2s_1$ is
\[
M(s_1s_2s_1,\PP)=
\left(\begin{array}{rrr}
p^{1}{}_{s_1,0} + p^{1}{}_{s_1,1}x_1 & p^{1}{}_{s_2,0} + p^{1}{}_{s_2,1}x_2 & p^{1}{}_{s_1,0} + p^{1}{}_{s_1,1}x_3 \\
p^{2}{}_{s_1,0} + p^{2}{}_{s_1,1}x_1 & p^{2}{}_{s_2,0} + p^{2}{}_{s_2,1}x_2 & p^{2}{}_{s_1,0} + p^{2}{}_{s_1,1}x_3 \\
p^{3}{}_{s_1,0} + p^{3}{}_{s_1,1}x_1 & p^{3}{}_{s_2,0} + p^{3}{}_{s_2,1}x_2 & p^{3}{}_{s_1,0} + p^{3}{}_{s_1,1}x_3
\end{array}\right).
\]
The model matrix for $s_2s_1s_2$ is
\[
M(s_2s_1s_2,\PP)=
\left(\begin{array}{rrr}
p^{1}{}_{s_2,0} + p^{1}{}_{s_2,1}x_1 & p^{1}{}_{s_1,0} + p^{1}{}_{s_1,1}x_2 & p^{1}{}_{s_2,0} + p^{1}{}_{s_2,1}x_3 \\
p^{2}{}_{s_2,0} + p^{2}{}_{s_2,1}x_1 & p^{2}{}_{s_1,0} + p^{2}{}_{s_1,1}x_2 & p^{2}{}_{s_2,0} + p^{2}{}_{s_2,1}x_3 \\
p^{3}{}_{s_2,0} + p^{3}{}_{s_2,1}x_1 & p^{3}{}_{s_1,0} + p^{3}{}_{s_1,1}x_2 & p^{3}{}_{s_2,0} + p^{3}{}_{s_2,1}x_3
\end{array}\right).
\]
\end{example}

\subsection{Binet--Cauchy on model matrices}
\label{ssec:bin_cau_model}

We use Binet--Cauchy's formula \eqref{eq:cauchy_binet} from Section~\ref{ssec:vandermonde} to give a
description of the determinants of model matrices.
Before giving a first description, we set some bookkeeping notations and give two lemmas.
Let $Z\subseteq\{0,\dots,\nu N-1\}$ and $\# Z=N$, we write
\[
Z=\{z_1,z_2,\dots,
z_N\},
\]
such that $z_1<z_2\dots<z_N$, and
\[
z_i=q_i\nu+r_i, \text{ with } 0\leq r_i<\nu, \text{ for all } i\in[N].
\]
We use the set $Z$ to index columns of the coefficients tensor and the rows of
the variables tensor. The correspondence between indices in $Z$ and columns 
of the coefficients tensor is described as follows.
Given a couple $(j,k)\in[N]\times \{0,1,\dots,\nu-1\}$ indexing a column of the coefficients tensor, we define $q:=j-1$, $r:=k$, and $z:=q\nu+r$.
This way, the couple $(j,k)$ corresponds to a unique element $z$ in $\{0,\dots,\nu N-1\}$, and vice-versa.
The correspondence with rows of the variables tensor works similarly.
The following two lemmas can be checked using the definition of parameter and variables tensors and properties of the determinant.

\begin{lemma}\label{lem:det_T}
Let $Z\subseteq\{0,\dots,\nu N-1\}$ with $\# Z=N$.
The determinant of the variables tensor~$\T_W$ restricted to the rows in $Z$ is
\[
\det [\T_W]_Z=\begin{cases} 0 & \text{ if } q_i= q_j \text{ for some } i\neq j, \\
x_1^{r_1}\cdots x_N^{r_N} & \text{ else}.
\end{cases}
\]
\end{lemma}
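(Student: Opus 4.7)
My plan is to recognize the restricted matrix $[\T_W]_Z$ as a generalized permutation matrix (possibly with coincident column supports) by unpacking the block-diagonal structure of $\T_W$, so that the determinant is either $0$ or a single monomial that can be read off directly from $Z$.

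First, I would unflatten the defining formula for $\T^{k,j}{}_{l}(\nu,N)$: the constraint $j=l$ hidden in the inner sum forces the flattened $\nu N \times N$ matrix to have a single nonzero entry $x_l^{k-1}$ in row $(k,j)$ and column $l$ precisely when $j=l$, and zeros otherwise. This is exactly the block-diagonal picture of $X_d$ already displayed in Section~\ref{ssec:vandermonde}. Under the bookkeeping bijection $z=q\nu+r$, the row indexed by $z$ therefore carries its unique nonzero entry $x_{q+1}^{r}$ in column $q+1$.

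Now I restrict to the rows indexed by $Z=\{z_1,\dots,z_N\}$ and split into two cases. If some pair $i\neq j$ satisfies $q_i=q_j$, then rows $i$ and $j$ of the $N\times N$ submatrix $[\T_W]_Z$ place their unique nonzero entries in the same column $q_i+1$. Since every row has at most one nonzero entry, the $N$ nonzero entries of $[\T_W]_Z$ are supported on at most $N-1$ columns, so at least one column of $[\T_W]_Z$ vanishes identically and hence $\det[\T_W]_Z=0$.

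If instead $q_1,\dots,q_N$ are pairwise distinct, then the chain $z_1<\dots<z_N$ combined with $0\leq r_i<\nu$ forces $q_1<\dots<q_N$; since each $q_i$ lies in $\{0,\dots,N-1\}$, we must have $q_i=i-1$. Consequently $[\T_W]_Z$ is diagonal with diagonal entries $x_i^{r_i}$, and $\det[\T_W]_Z=\prod_{i=1}^{N} x_i^{r_i}$. I do not expect a serious obstacle here: the only slightly non-obvious point is that sortedness of $(z_i)$ together with pairwise distinctness of $(q_i)$ implies $q_i=i-1$, which is a one-line verification using $r_i<\nu$. Everything else is direct bookkeeping from the definition of $\T_W$.
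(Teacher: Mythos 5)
Your proof is correct and takes exactly the route the paper intends: the paper states Lemma~\ref{lem:det_T} without proof, remarking only that it ``can be checked using the definition of parameter and variables tensors and properties of the determinant,'' and your argument is precisely that check, unpacking the block-diagonal structure of the flattened $\T_W$. You also handle correctly the one genuinely non-obvious point, namely that $z_1<\dots<z_N$ together with pairwise distinct $q_i\in\{0,\dots,N-1\}$ forces $q_i=i-1$, so the restricted matrix is honestly diagonal and the monomial $x_1^{r_1}\cdots x_N^{r_N}$ appears with no sign.
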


\begin{lemma}
\label{lem:det_coeff}
Let $\PP$ be a parameter tensor for $(W,S)$, $v=v_1v_2\cdots v_N\in\Red(\wo)$, and $Z\subseteq\{0,\dots,\nu N-1\}$ with $\# Z=N$.
If $r_i=r_j$ and $v_i= v_j$ with $1\leq i < j\leq N$, then the determinant of
the coefficients tensor $\C(v,\PP)$ restricted to the columns in $Z$ is 0.
\end{lemma}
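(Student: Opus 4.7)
The plan is to show that under the hypotheses, the $i$-th and $j$-th columns of the matrix $[\C(v,\PP)]_Z$ are equal as vectors in $\R^N$, so that the determinant vanishes by the alternating property of $\det$.

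First I unpack the flattening of the coefficients tensor. Writing $\C(v,\PP)$ as a matrix with rows indexed by $\ell \in [N]$ and columns indexed by pairs $(a,k) \in [N] \times \{0,1,\dots,\nu-1\}$, the entry at position $(\ell,(a,k))$ is $p^{\ell}{}_{v_a,k}$ by the definition of $\C(v,\PP)$. Under the bijection $z = (a-1)\nu + k$ between $\{0,\dots,\nu N-1\}$ and such pairs, the element $z_i \in Z$ picks out the column of $\C(v,\PP)$ indexed by $(q_i+1,\, r_i)$, which is therefore the vector
\[
\bigl(p^{1}{}_{v_{q_i+1},\,r_i},\ p^{2}{}_{v_{q_i+1},\,r_i},\ \dots,\ p^{N}{}_{v_{q_i+1},\,r_i}\bigr)^{\top}\in\R^N.
\]
Adopting the shorthand $v_i := v_{q_i+1}$ implicit in the statement of the lemma, the $i$-th column of $[\C(v,\PP)]_Z$ reads entrywise as $p^{\ell}{}_{v_i,r_i}$ and the $j$-th column as $p^{\ell}{}_{v_j,r_j}$. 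Since by hypothesis $r_i=r_j$ and $v_i=v_j$, these two columns are identical, forcing $\det[\C(v,\PP)]_Z=0$.

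The argument is a direct bookkeeping verification, and no real obstacle arises; the only subtlety is the indexing conversion between an element $z_i\in Z$ and its corresponding column $(q_i+1,r_i)$ of $\C(v,\PP)$. Once this correspondence is set up, the collapse of the two columns is immediate from the formula for the entries of $\C(v,\PP)$, in direct analogy with the vanishing criterion ``$q_i = q_j$'' for $\det[\T_W]_Z$ in Lemma~\ref{lem:det_T}.
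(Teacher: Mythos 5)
Your proof is correct and is exactly the routine verification the paper intends: the paper states Lemma~\ref{lem:det_coeff} with the remark that it ``can be checked using the definition of parameter and variables tensors and properties of the determinant,'' and your argument---converting $z_i\in Z$ to the column pair $(q_i+1,r_i)$ and observing that the hypotheses force two identical columns $p^{\ell}{}_{v_i,r_i}=p^{\ell}{}_{v_j,r_j}$---is that check, carried out carefully. You also correctly resolved the one genuine subtlety, namely that $v_i$ in the statement must be read as $v_{q_i+1}$ (which coincides with the literal $i$-th letter precisely on the sets where $\det[\T_W]_Z\neq 0$, i.e.\ when $q_i=i-1$), so nothing is missing.
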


Given a reduced expression $v=v_1v_2\cdots v_N\in\Red(\wo)$, the previous lemmas lead to the definition of the following collection of $N$-subsets of $\{0,\dots,\nu N-1\}$:
\[
\ZZ_v:= \{Z\subset \{0,\dots,\nu N-1\} : \# Z=N,\ q_i\neq q_j\ \text{for all } i\neq j, \text{ and if } v_i= v_j, \text{ then } r_i\neq r_j\}.
\]
The subsets in $\ZZ_v$ are precisely those whose summand are not implied to be equal to zero in the Binet--Cauchy formula for the determinant of the model matrix.
The following proposition is a consequence of Lemmas~\ref{lem:binetcauchy},~\ref{lem:det_T}, and~\ref{lem:det_coeff}, and is improved in Theorem~\ref{thm:B_det}.

\begin{proposition}\label{prop:determinant_v1}
Let $\PP$ be a parameter tensor for $(W,S)$, and $v\in\Red(\wo)$.
The determinant of the model matrix $\M(v,\PP)$ is
\[
\det \M(v,\PP)=\sum_{Z\in \ZZ_v} \det [\C(v,\PP)]_Z \cdot x_1^{r_1}\cdots x_N^{r_N}.
\]
\end{proposition}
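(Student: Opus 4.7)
The plan is to recognize $M(v,\PP)$ as a product of a short-fat matrix by a tall-thin one, apply Binet--Cauchy, and then kill off the vanishing summands using the two lemmas that precede the statement.

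More precisely, after flattening, the coefficients tensor $\C(v,\PP)$ is an $N\times (\nu N)$ matrix, the variables tensor $\T_W$ is a $(\nu N)\times N$ matrix, and by definition $M(v,\PP) = \C(v,\PP)\cdot \T_W$. So Lemma~\ref{lem:binetcauchy} (applied with $r=N$, $s=\nu N$) yields
\[
\det M(v,\PP) = \sum_{Z\in \binom{\{0,\dots,\nu N-1\}}{N}} \det[\C(v,\PP)]_Z \cdot \det[\T_W]_Z,
\]
where I use the indexing convention $z = q\nu + r$ fixed just before the statement to label rows/columns of $\T_W$ and $\C(v,\PP)$ simultaneously by elements of $\{0,\dots,\nu N-1\}$.

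Next I would prune this sum in two passes. First, by Lemma~\ref{lem:det_T}, the factor $\det[\T_W]_Z$ is zero as soon as two elements $z_i,z_j\in Z$ satisfy $q_i=q_j$, and otherwise equals the monomial $x_1^{r_1}\cdots x_N^{r_N}$. Second, by Lemma~\ref{lem:det_coeff}, once the $q_i$ are distinct (so that each $q_i+1$ names a distinct position in $v$ and the letters $v_i$ in the lemma are well defined), the remaining factor $\det[\C(v,\PP)]_Z$ vanishes whenever $v_i=v_j$ and $r_i=r_j$ for some $i\neq j$. Collecting the surviving indices gives exactly the collection $\ZZ_v$ defined just before the statement, and substituting the explicit value of $\det[\T_W]_Z$ gives the claimed formula.

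There is no real obstacle: the only point requiring care is bookkeeping, namely confirming that the flattening of $\C(v,\PP)$ and of $\T_W$ is compatible with the tensor contraction defining $M(v,\PP)$ (so that Binet--Cauchy is applied to the correct pair of matrices), and that the bijective correspondence $(j,k)\leftrightarrow z=(j-1)\nu + k$ is used consistently on both factors so that the two lemmas apply to the same indexing set $Z$. Once this is done, the statement is a direct specialization of Binet--Cauchy.
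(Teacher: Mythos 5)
Your proposal is correct and is exactly the paper's argument: the paper derives this proposition directly as a consequence of Lemma~\ref{lem:binetcauchy} (Binet--Cauchy applied to the flattened factorization $M(v,\PP)=\C(v,\PP)\cdot\T_W$) together with Lemmas~\ref{lem:det_T} and~\ref{lem:det_coeff}, which prune the sum to the index sets in $\ZZ_v$. Your bookkeeping remark—that the vanishing of $\det[\T_W]_Z$ for repeated $q_i$ forces $q_i=i-1$ on the surviving terms, so the letters $v_i$ in Lemma~\ref{lem:det_coeff} are unambiguously the ones at the corresponding positions of $v$—is precisely the point the paper leaves implicit.
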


\subsection{Symmetric formula for the determinant of model matrices via parameter matrices}
\label{ssec:formula_param}

We proceed to express $\det \M(v,\PP)$ for some reduced word $v\in\Red(\wo)$ in terms of minors of $\PP$, i.e. square submatrices of maximal size.
We also interchangeably use the term minor to refer to the determinant of such submatrices.
We begin by a bijection to encode the columns appropriately using a permutation and a tuple of partitions, which allows the usage of Schur functions.
Given a set of indices $Z\in\ZZ_v$, if $v_i= v_j$ and $i\neq j$, then $r_i\neq r_j$.
Consequently, the matrix $[\C(v,\PP)]_Z$ formed by concatenating the columns in $Z$ increasingly is equal to a column permutation~$\pi_Z$ of the matrix $[\PP]_{\ZZbij}$ of $\PP$ formed by concatenating the columns in the set
\[
\ZZbij:=\{(s_i,r_j) :  j\in[N],\ v_j= s_i \text{ and } z_j=q_j\nu+r_j\in Z\}
\]
increasingly with respect to the lexicographic order.
Given the abelian vector $\abel_v=(c_i)_{s_i\in S}$ of $v$, this motivates the definition of the following collection of subsets of columns of $\PP$:
\[
\mathfrak{Z}_{\abel_v} := 
      \{\ZZbij\subseteq S\times \{0,\dots, \nu-1\} : \ZZbij \text{ contains exactly } c_i \text{ elements } (s_i,\cdot), \forall s_i\in S\}.
\]
The collection $\mathfrak{Z}_{\abel_v}$ describes precisely the minors of $\PP$ that have exactly $c_i$ columns of type $(s_i,\cdot)$ and, as we will see, only these minors matter for the factorization of $\det M(v,\PP)$.
Given some set of indices ${Z=\{z_i=q_i\nu+r_i\}_{i=1}^N}$, for each letter $s_i\in S$, we write the values $r_j$ where $v_j=s_i$ in an ordered list $R_i$ of length $c_i$ created by scanning the values of $r_j$ from $1$ to $N$ and keeping only those where $v_j=s_i$:
\[
R_i := [r_j: \text{if } v_j=s_i]_{j=1}^N.
\]
Since all entries in $R_i$ are distinct, the list $R_i$ corresponds canonically to a permutation $\pi_i$ in $\Sym_{\{v\}_i}$ via their relative order.
The permutations $\{\pi_i\}_{i=1}^n$ act on disjoint sets and can be seen as permutations in $\Sym_{N}$, so we define $\pi_Z:=\pi_1\cdots \pi_n\in\Sym_N$.
Observe that the permutation $\pi_Z\in\Sym_N$ is such that $\pi\cdot v = v$.
Further, the map
\[
\begin{split}
\ZZ_v & \rightarrow \mathfrak{Z}_{\abel_v}\times \prod_{i=1}^n\Sym_{\{v\}_i}\\
Z & \mapsto (\ZZbij,\pi_Z)
\end{split}
\]
is a bijection.

\begin{lemma}
\label{lem:reorder_Z}
Let $v\in\Red(\wo)$.
If $Z\in\ZZ_v$, then
\[
\det[\C(v,\PP)]_Z=\sigma(v)\sigma(\pi_Z)\det [\PP]_{\ZZbij}=\sigma(v)\sign(\pi_Z)\det [\PP]_{\ZZbij}.
\]
\end{lemma}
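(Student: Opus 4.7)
The plan is to show that $[\C(v,\PP)]_Z$ is obtained from $[\PP]_{\ZZbij}$ by a column permutation whose sign equals $\sigma(v)\sign(\pi_Z)$, so the result follows from the multilinearity of the determinant.

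First, I would unwind the column indexing. Recall that the column of $\C(v,\PP)$ indexed by $(j,k)\in[N]\times\{0,\dots,\nu-1\}$ is the column of $\PP$ indexed by $(v_j,k)$, and that this column corresponds to $z=(j-1)\nu+k$. Since $Z\in\ZZ_v$ has $N$ elements with all $q_j$ distinct and contained in $\{0,\dots,N-1\}$, we in fact have $\{q_j\}=\{0,\dots,N-1\}$, hence $q_j=j-1$ after reindexing. Thus the $j$-th column of $[\C(v,\PP)]_Z$ (ordered by increasing $z$) is exactly the column of $\PP$ indexed by $(v_j,r_j)$, and $[\C(v,\PP)]_Z$ and $[\PP]_{\ZZbij}$ contain the same multiset of columns of $\PP$. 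They therefore differ by a column permutation whose sign I must compute.

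Next, I would factor this permutation into two stages. Stage~(i) sorts the column sequence $(v_1,r_1),\dots,(v_N,r_N)$ by first coordinate with respect to the lexicographic order on $S$, while preserving the relative order of columns sharing a first coordinate. By Lemma~\ref{lem:count_inv}, the number of adjacent transpositions required equals $\inv(v)$, so this stage has sign $(-1)^{\inv(v)}=\sigma(v)$. After stage~(i), for each letter $s_i\in S$, the columns with first coordinate $s_i$ appear consecutively, and their second coordinates form the list $R_i$ in its original relative order. Stage~(ii) sorts each of these blocks by second coordinate in increasing order. By construction, sorting $R_i$ is carried out by a permutation whose sign is $\sign(\pi_i)$. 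Since the $\pi_i$ act on the disjoint index sets $\{v\}_i$, the total sign of stage~(ii) is $\prod_{i=1}^n\sign(\pi_i)=\sign(\pi_Z)$. Multiplying the two signs yields the first claimed equality.

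Finally, the equality $\sigma(\pi_Z)=\sign(\pi_Z)$ follows from the observation made earlier in Section~\ref{sec:s_sign}: for a permutation viewed as a word over $S$ with abelian vector $(1,1,\dots,1)$, the $S$-sign $(-1)^{\inv(\cdot)}$ coincides with its standard sign as a permutation. The main obstacle is purely notational bookkeeping: one must verify that stage~(i) indeed leaves the $r$-values within each letter block in the order recorded by $R_i$, so that stage~(ii) can be identified cleanly with the permutation $\pi_Z$.
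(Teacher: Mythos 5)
Your proof is correct and takes essentially the same route as the paper: both factor the column permutation relating $[\C(v,\PP)]_Z$ to $[\PP]_{\ZZbij}$ into a within-letter sorting step of sign $\sign(\pi_Z)$ and a stable lexicographic letter-sorting step of sign $\sigma(v)$ (the paper phrases the latter via $\std(v)^{-1}$, you via Lemma~\ref{lem:count_inv}). The only difference is the order in which the two stages are applied, which is immaterial since the signs multiply.
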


\begin{proof}
From the definition of $\C(v,\PP)$ and $\ZZ_v$, the matrix $[\C(v,\PP)]_Z$ is a permutation of the columns of a column-submatrix of $\PP$.
The column of $\C(v,\PP)$ indexed by $z_j=q_j\nu + r_j$ correspond to the column of $\PP$ indexed by the ordered pair $(s_i,r_j)$ where $v_j=s_i$.
We use the latter labeling to obtain the permutation of the columns of $[\C(v,\PP)]_Z$ in two steps as follows.
First permute the columns using $\pi_Z^{-1}$; the index of the first column correspond to the relabeled index $\pi_Z^{-1}(1)$, and the index of the last column correspond to the relabeled index $\pi_Z^{-1}(N)$.
This permutation orders increasingly the labels $r_j$ while keeping the labels~$s_i$ unchanged.
Then, permute the columns using $\std(v)^{-1}$.
Since the standard permutation of $v$ has shortest length, it does not change the ordering whenever two columns have the same first-label coordinate.
\end{proof}

Before giving the factorization formula, we give two last definitions.
The first one is related to a common bijection between subsets $\binom{[n]}{k}$ and partitions with exactly $k$ parts (that may be empty) of size at most $n-k$.

\begin{definition}[Standard partitions $\Lambda_\ZZbij$]
Let $v\in\Red(\wo)$ with abelian vector $\abel_v=(c_i)_{s_i\in S}$ and $\ZZbij\in \mathfrak{Z}_{\abel_v}$.
For $i\in[n]$, order decreasingly the elements of~$R_i$ obtained from $\ZZbij$ and substract $c_i-j$ to the element at position~$j$ (starting at $j=1$) to obtain the \defn{standard partition} $\lambda^{\ZZbij,i}$.
The sequence of partitions $\Lambda_\ZZbij$ is defined as $(\lambda^{\ZZbij,i})_{i=1}^n$.
\end{definition}

\begin{example}
Let $W=A_2$.
If $v=s_1s_2s_1$, then $\abel_v=(2,1)$ and
\begin{align*}
\mathfrak{Z}_{\abel_v} & = \left\{\ZZbij_1=\{(s_1,0),(s_1,1),(s_2,0)\},\ZZbij_2=\{(s_1,0),(s_1,1),(s_2,1)\}\right\},\\
\ZZ_v & = \left\{\{0,2,5\},\{0,3,5\},\{1,2,4\},\{1,3,4\}\right\}.
\end{align*}
For $Z=\{1,3,4\}=\{0\cdot2+1,1\cdot2+1,2\cdot2+0\}$, we get $R_1=[1,0]$ and $R_2=[1]$ so that $\pi_1=[3,1]\in\Sym_{\{1,3\}}$, $\pi_2=[2]\in\Sym_{\{2\}}$ and $\pi_Z=[3,2,1]$.
So $Z$ is sent to $(\ZZbij_2,\pi_Z)$ and the other three pairs are obtained similarly for the other three elements of $\ZZ_v$.
For $\ZZbij_1$, ordering $R_1$ and $R_2$ decreasingly gives $[1,0]$ and $[0]$, leading to the standard partitions $((0,0),(0))$.
For $\ZZbij_2$, ordering $R_1$ and $R_2$ decreasingly gives $[1,0]$ and $[1]$, leading to the standard partitions $((0,0),(1))$.
\end{example}

\begin{definition}[Vandermonde divisor]
Given $v\in\Red(\wo)$ with abelian vector $\abel_v=(c_i)_{s_i\in S}$, let
\[ \V(v):=\prod_{\substack{s_i\in S \\
c_i\geq 2}}\prod_{\substack{v_j= v_k=s_i \\ j<k}} (x_k-x_j)
\]
be the \defn{Vandermonde divisor} of $v$.
The degree of $\V(v)$ is $\sum_{s_i\in S} \binom{c_i}{2}$.
\end{definition}

\begin{mainthm}
\label{thm:B_det}
Let $\PP$ be a parameter tensor for a Coxeter system $(W,S)$, and $v\in\Red(\wo)$.
The determinant of the model matrix $\M(v,\PP)$ of $v$ with respect to $\PP$ is the multivariate polynomial 
\[
\det M(v,\PP) = \sigma(v) 
	 \V(v)
	 \sum_{\ZZbij\in\mathfrak{Z}_{\abel_v}}
	 \det [\PP]_{\ZZbij}
	 \Sh_{\Lambda_{\ZZbij},\Omega_v},
\]
where $\Lambda_{\ZZbij}=(\lambda^{\ZZbij,1},\dots,\lambda^{\ZZbij,n})$, $\Omega_v=(\{v\}_1,\dots,\{v\}_n)$, and $\Sh_{\Lambda_{\ZZbij},\Omega_v}$ is the partial Schur function with respect to $\Lambda_{\ZZbij}$ and $\Omega_v$, as defined in Section~\ref{ssec:partial_schur}.
\end{mainthm}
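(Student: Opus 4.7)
The plan is to start from Proposition~\ref{prop:determinant_v1}, which already writes $\det M(v,\PP)$ as a sum indexed by $\ZZ_v$, and to then repackage this sum using the bijection $\ZZ_v \longleftrightarrow \mathfrak{Z}_{\abel_v} \times \prod_{i=1}^n \Sym_{\{v\}_i}$ constructed just before the statement. The key observation is that after applying Lemma~\ref{lem:reorder_Z} to pull $\det[\PP]_{\ZZbij}$ out of each term, the residual sum over permutations splits by letter class and is exactly an alternant whose ratio against the corresponding Vandermonde is a Schur function.

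More precisely, I would first apply Lemma~\ref{lem:reorder_Z} to each summand of Proposition~\ref{prop:determinant_v1}, replacing $\det[\C(v,\PP)]_Z$ by $\sigma(v)\sign(\pi_Z)\det[\PP]_{\ZZbij}$, and pull the constant $\sigma(v)$ out. Splitting $Z$ via the bijection then gives
\[
\det M(v,\PP) \;=\; \sigma(v)\!\!\sum_{\ZZbij\in\mathfrak{Z}_{\abel_v}}\!\!\det[\PP]_{\ZZbij} \cdot \Bigg(\sum_{\pi\,\in\,\prod_i\Sym_{\{v\}_i}} \sign(\pi)\, x_1^{r_1(\pi)}\cdots x_N^{r_N(\pi)}\Bigg),
\]
where the tuple $(r_1(\pi),\ldots,r_N(\pi))$ records how $\pi$ reshuffles the $R_i$'s associated to $\ZZbij$. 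It remains to identify the inner bracket with $\V(v)\cdot \Sh_{\Lambda_{\ZZbij},\Omega_v}$.

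Since the subgroups $\Sym_{\{v\}_i}$ act on disjoint position-sets $\{v\}_i$, the inner signed sum factorizes as a product over $i\in[n]$ of signed sums of the form $\sum_{\pi_i\in\Sym_{\{v\}_i}}\sign(\pi_i)\prod_k x_{j_k}^{r_{\pi_i(k)}}$, each of which is the determinant of the rectangular alternant with column set $\{v\}_i$ and row-exponent set the $c_i$-subset of $\{0,\ldots,\nu-1\}$ read off from $\ZZbij$. By Definition~\ref{def:schur}, after sorting the exponents decreasingly and subtracting $c_i-j$ at position $j$ (which is exactly the recipe producing $\lambda^{\ZZbij,i}$), this determinant equals $\det\Vand_{\{v\}_i}(c_i)\cdot \sh_{\lambda^{\ZZbij,i},\{v\}_i}$. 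Multiplying over $i$ with $c_i\geq 2$ reassembles $\V(v)$ and $\Sh_{\Lambda_{\ZZbij},\Omega_v}$ (for $c_i\leq 1$ the factor is trivially $1$ and contributes nothing), completing the identity.

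The main delicate point will be bookkeeping of signs and exponent indexing, namely verifying that the reversal/shift used to define $\lambda^{\ZZbij,i}$ matches the $d-i+1$-shift appearing in Definition~\ref{def:schur} and that the sign $\sigma(v)\sign(\pi_Z)$ produced by Lemma~\ref{lem:reorder_Z} meshes cleanly with the sign implicit in passing from the alternant to the Schur quotient (once the columns/rows are reordered lexicographically). A small check should also be made that the bijection is well-defined on the nose, i.e.\ that the condition $r_i\neq r_j$ when $v_i=v_j$ in the definition of $\ZZ_v$ is precisely what makes the subsets $\ZZbij\in\mathfrak{Z}_{\abel_v}$ together with the product of symmetric groups on letter-positions cover $\ZZ_v$ without overlap.
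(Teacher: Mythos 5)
Your proposal is correct and follows essentially the same route as the paper's proof: starting from Proposition~\ref{prop:determinant_v1}, applying Lemma~\ref{lem:reorder_Z}, splitting the sum via the bijection $\ZZ_v \leftrightarrow \mathfrak{Z}_{\abel_v}\times\prod_i\Sym_{\{v\}_i}$, and identifying each letter-class alternant with $\det\Vand_{\{v\}_i}(c_i)\cdot\sh_{\lambda^{\ZZbij,i},\{v\}_i}$ via Definition~\ref{def:schur}. The only differences are cosmetic: the paper opens with a (logically redundant) Nullstellensatz observation that $\V(v)$ divides the determinant, and it extracts the Schur factors one letter at a time rather than factoring the product over all disjoint classes simultaneously as you do.
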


\begin{proof}
Let $v=v_1v_2\cdots v_N$, with $v_i\in S$.
If $v_j=v_k=s_i$ for some $s_i\in S$ and $j\neq k$, then setting $x_k=x_j$ in the matrix $M(v,\PP)$ makes its determinant vanish.
Consequently, $(x_k-x_j)$ divides the determinant by Hilbert's Nullstellensatz, see e.g. \cite[Lemma~3.3]{humphreys_reflection_1990}.
Hence, we know that~$\V(v)$ divides $\det M(v,\PP)$, and it remains to determine the quotient of the division.

Yet by Proposition~\ref{prop:determinant_v1},
\[
\det M(v,\PP)=\sum_{Z\in \ZZ_v} \det[\C(v,\PP)]_Z \cdot x_1^{r_1}\cdots x_N^{r_N}.
\]
By Lemma~\ref{lem:reorder_Z}
\[
\det[\C(v,\PP)]_Z=\sigma(v)\sigma(\pi_Z)\det [\PP]_{\ZZbij},
\]
and $M(v,\PP)$ now becomes
\[
\begin{split}
\det M(v,\PP) & = \sum_{Z\in
\ZZ_v}\sigma(v)\sigma(\pi_Z)\det [\PP]_{\ZZbij}\cdot x_1^{r_1}\cdots
x_N^{r_N}, \\
& = \sigma(v)\sum_{Z\in \ZZ_v}\det [\PP]_{\ZZbij}\sigma(\pi_Z)\cdot x_1^{r_1}\cdots
x_N^{r_N}.
\end{split}
\]
Since $Z$ is uniquely determined by $(\ZZbij,\pi_Z)$ and vice-versa, we rewrite the sum as
\[
\resizebox{\hsize}{!}{$
\begin{split}
\det \M(v,\PP) & = \sigma(v) \sum_{\ZZbij\in\mathfrak{Z}_{\abel_v}}
	 \det [P]_{\ZZbij}
	 \sum_{\pi_1\in\Sym_{\{v\}_1}}
	 \sum_{\pi_2\in\Sym_{\{v\}_2}}
	 \cdots
	 \sum_{\pi_n\in\Sym_{\{v\}_n}}    \sigma(\pi_Z) \cdot
	 x_1^{\pi(1)}\cdots x_N^{\pi(N)}, \\
     & = \sigma(v) \sum_{\ZZbij\in\mathfrak{Z}_{\abel_v}}
	 \det [P]_{\ZZbij}
	 \sum_{\pi_1\in\Sym_{\{v\}_1}} \sigma(\pi_1)
	 \sum_{\pi_2\in\Sym_{\{v\}_2}} \sigma(\pi_2)
	 \ \cdots
	 \sum_{\pi_n\in\Sym_{\{v\}_n}}    \sigma(\pi_n) \cdot
	 x_1^{\pi(1)}\cdots x_N^{\pi(N)}. \\
\end{split}$}
\]
The powers of the variables $x_j$ such that $j\in [N]$ and $v_j\neq s_n$ stay constant in the last sum, so we factor their product to get
\[
\resizebox{\hsize}{!}{$
\begin{split}
\det \M(v,\PP) & = \sigma(v) \sum_{\ZZbij\in\mathfrak{Z}_{\abel_v}}
	 \det [P]_{\ZZbij}
	 \sum_{\pi_1\in\Sym_{\{v\}_1}} \sigma(\pi_1)
	 \sum_{\pi_2\in\Sym_{\{v\}_2}} \sigma(\pi_2)
	 \ \cdots
	 \prod_{i\in [N]\setminus \{v\}_{n}}x_i^{\pi(i)}
	 \sum_{\pi_n\in\Sym_{\{v\}_n}} \sigma(\pi_n) \cdot
	 \prod_{j\in \{v\}_{n}}x_j^{\pi(j)}. \\
\end{split}
$}
\]
By Definition~\ref{def:schur} of Schur functions, we get
\[
\sum_{\pi_n\in\Sym_{\{v\}_n}} \sigma(\pi_n) \cdot
\prod_{j\in \{v\}_{n}}x_j^{\pi(j)}= \det \Vand_{\{v\}_{n}}(c_n)\sh_{\lambda^{\ZZbij,n},\{v\}_{n}}.
\]
The latter equality leads to the equation 
\[
\resizebox{\hsize}{!}{$
\begin{split}
\det \M(v,\PP) = &\  \sigma(v) 
	   \det \Vand_{\{v\}_{n}}(c_n) \\
	 & \times \sum_{\ZZbij\in\mathfrak{Z}_{\abel_v}}
	   \det [P]_{\ZZbij}
	   \sh_{\lambda^{\ZZbij,n},\{v\}_{n}}
	   \sum_{\pi_1\in\Sym_{\{v\}_1}} \sigma(\pi_1)
	   \sum_{\pi_2\in\Sym_{\{v\}_2}} \sigma(\pi_2)
	   \ \cdots
	   \sum_{\pi_{n-1}\in\Sym_{\{v\}_{n-1}}} \sigma(\pi_{n-1})
	   \prod_{i\in[N]\setminus \{v\}_{n}}x_i^{\pi(i)}.\\
\end{split}$}
\]
Repeating the last step $n-1$ times, we get
\[
\begin{split}
\det M(v,\PP) & = \sigma(v) 
	 \V(v)
	 \sum_{\ZZbij\in\mathfrak{Z}_{\abel_v}}
	 \det [\PP]_{\ZZbij}
	 \sh_{\lambda^{\ZZbij,1},\{v\}_{1}}
	 \sh_{\lambda^{\ZZbij,2},\{v\}_{2}}
	 \cdots
	 \sh_{\lambda^{\ZZbij,n},\{v\}_{n}}.\qedhere
\end{split}
\]
\end{proof}

\begin{corollary}
If the abelian vector of $v$ is $\abel_v=(c_i)_{s_i\in S}$, then the polynomial 
\[
\sum_{\ZZbij\in\mathfrak{Z}_{\abel_v}}
\det [P]_{\ZZbij}
\Sh_{\Lambda_{\ZZbij},\Omega_v},
\]
is symmetric with respect to the group $\prod_{i=1}^{n}\Sym_{c_i}$ acting
on $\{x_1,\dots,x_N\}$ by permutation of indices such that $v_{\pi(j)}=v_j$, for all $j\in[N]$.
\end{corollary}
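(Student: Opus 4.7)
The plan is to derive the symmetry directly from Theorem~\ref{thm:B_det}. Write $F(x_1,\dots,x_N)$ for the polynomial appearing in the statement, so that Theorem~\ref{thm:B_det} reads
\[
\det M(v,\PP) \;=\; \sigma(v)\,\V(v)\,F.
\]
Let $\pi \in G := \prod_{i=1}^n \Sym_{c_i}$ act on variables by $x_j \mapsto x_{\pi(j)}$, where $v_{\pi(j)} = v_j$ for all $j\in[N]$. I will show that under this substitution both $\det M(v,\PP)$ and $\V(v)$ transform by the factor $\sign(\pi)$, while the scalar $\sigma(v)$ is unaffected since it does not involve the $x_j$.

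For the model matrix, the entry in row $i$ and column $l$ of $M(v,\PP)$ equals $\sum_{k=1}^{\nu} p^i{}_{v_l,k-1}\, x_l^{k-1}$, so column $l$ depends only on the letter $v_l$ and the variable $x_l$. Substituting $x_l \mapsto x_{\pi(l)}$ therefore turns column $l$ into the original column $\pi(l)$ of $M(v,\PP)$, using the hypothesis $v_{\pi(l)} = v_l$. Consequently, the variable-permuted matrix is obtained from $M(v,\PP)$ by a column permutation along $\pi$, and its determinant picks up the sign $\sign(\pi)$. For the Vandermonde divisor, the product factors as $\V(v) = \prod_{i :\, c_i \geq 2} V_i$, where $V_i := \prod_{j<k,\,v_j=v_k=s_i}(x_k - x_j)$ is the Vandermonde polynomial in the variables $\{x_j : j \in \{v\}_i\}$. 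Since $\pi$ restricts to a permutation $\pi_i$ of each such variable set and Vandermonde polynomials are alternating, $V_i|_{\pi \cdot x} = \sign(\pi_i)\,V_i$, whence $\V(v)|_{\pi \cdot x} = \prod_i \sign(\pi_i)\,\V(v) = \sign(\pi)\,\V(v)$.

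Combining, applying the substitution $\pi$ to both sides of the identity of Theorem~\ref{thm:B_det} yields
\[
\sign(\pi)\, \sigma(v)\,\V(v)\,F \;=\; \sigma(v) \cdot \sign(\pi)\,\V(v) \cdot F|_{\pi \cdot x}.
\]
Cancelling the common nonzero factor $\sigma(v)\,\V(v)\,\sign(\pi)$ in the integral domain $\R[\PP][x_1,\dots,x_N]$ gives $F|_{\pi \cdot x} = F$, which is the asserted $G$-invariance. There is no serious obstacle here; the only point demanding care is the identification of the variable relabeling on $M(v,\PP)$ with a column permutation, which is transparent once one observes that column $l$ sees only its letter $v_l$ and its variable $x_l$.
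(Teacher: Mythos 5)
Your proof is correct, but it takes a genuinely different route from the paper's. The paper treats the corollary as an immediate term-by-term consequence of Theorem~\ref{thm:B_det}: each minor $\det[\PP]_{\ZZbij}$ involves no $x$-variables at all, and each partial Schur function $\Sh_{\Lambda_{\ZZbij},\Omega_v}$ is by construction a product of Schur polynomials $\sh_{\lambda^{\ZZbij,i},\{v\}_i}$, each symmetric in its own variable set $\{x_j : j\in\{v\}_i\}$ --- a fact recorded already in the definition of partial Schur functions in Section~\ref{ssec:partial_schur}. Hence every individual summand is invariant under $\prod_{i=1}^n \Sym_{c_i}$, and a fortiori so is the sum. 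You instead run the classical ``alternating over alternating is symmetric'' argument one level up: you verify that the variable relabeling acts on $M(v,\PP)$ as the column permutation $\pi$ (which is correct, since column $l$ sees only $v_l$ and $x_l$, and $v_{\pi(l)}=v_l$), that $\V(v)$ alternates with sign $\prod_i\sign(\pi_i)=\sign(\pi)$ on the disjoint supports, and then cancel in the integral domain $\R[\PP][x_1,\dots,x_N]$, noting $\V(v)\neq 0$. Both arguments are sound. What your approach buys is self-containedness: it uses Theorem~\ref{thm:B_det} purely as a black box and never invokes symmetry of Schur functions (indeed it would reprove that symmetry if the $\mathfrak{Z}_{\abel_v}$-sum had a single term). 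What it gives up is strength: the paper's term-by-term observation shows that \emph{each} summand $\det[\PP]_{\ZZbij}\,\Sh_{\Lambda_{\ZZbij},\Omega_v}$ is separately invariant, whereas your cancellation argument only yields invariance of the total sum.
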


\begin{maincor}
\label{cor:D_sign_of_model}
If $x_i>0$ for all $i\in[N]$ and $x_j>x_i$ whenever $i<j$ and $v_i= v_j$, then
\[
\sign(\det M(v,\PP))=\sigma(v)\sign\left(
\sum_{\ZZbij\in\mathfrak{Z}_{\abel_v}}
\det [\PP]_{\ZZbij}
\Sh_{\Lambda_{\ZZbij},\Omega_v}
\right),
\]
where the $\sign$ on the right-hand side is the usual sign function on real numbers.
\end{maincor}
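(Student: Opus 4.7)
The statement will follow almost directly from Theorem~\ref{thm:B_det}. The plan is to apply that factorization formula and then verify that, under the hypotheses on the variables, the Vandermonde divisor $\V(v)$ contributes a positive scalar, so the only sign that needs to be tracked separately is $\sigma(v)$.

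More precisely, my first step is to invoke Theorem~\ref{thm:B_det} to write
\[
\det M(v,\PP) = \sigma(v)\, \V(v) \sum_{\ZZbij\in\mathfrak{Z}_{\abel_v}} \det[\PP]_{\ZZbij}\, \Sh_{\Lambda_{\ZZbij},\Omega_v}.
\]
Since $\sigma(v)\in\{\pm 1\}$, the sign of the left-hand side equals $\sigma(v)\cdot \sign(\V(v))\cdot \sign\!\bigl(\sum_{\ZZbij}\det[\PP]_{\ZZbij}\Sh_{\Lambda_{\ZZbij},\Omega_v}\bigr)$, so the whole task reduces to showing that $\V(v)>0$.

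For this, I would unfold the definition of the Vandermonde divisor,
\[
\V(v) = \prod_{\substack{s_i\in S \\ c_i\geq 2}}\ \prod_{\substack{v_j=v_k=s_i \\ j<k}}(x_k-x_j).
\]
Every factor $(x_k-x_j)$ appearing in this product satisfies $j<k$ together with $v_j=v_k$, which is exactly the hypothesis under which we assume $x_k>x_j$. Hence each such factor is strictly positive and $\V(v)>0$. The positivity assumption $x_i>0$ is not needed for this step; it ensures additionally that the partial Schur functions appearing in the sum are well-defined as nonzero quantities for generic choices (the denominators are Vandermonde determinants in the corresponding variable blocks, which are nonzero when the $x_i$ are distinct), but plays no further role in the sign computation.

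Combining the two steps, $\sign(\det M(v,\PP)) = \sigma(v)\cdot \sign\!\bigl(\sum_{\ZZbij\in\mathfrak{Z}_{\abel_v}} \det[\PP]_{\ZZbij}\, \Sh_{\Lambda_{\ZZbij},\Omega_v}\bigr)$, which is the stated conclusion. There is no real obstacle here: all the substantive combinatorial work—organizing the Binet--Cauchy expansion, recognizing the Vandermonde factor, and grouping the remaining summation into partial Schur functions—has already been carried out in the proof of Theorem~\ref{thm:B_det}. The corollary is simply the observation that the chosen ordering of the variables makes the Vandermonde divisor positive, so the sign of the determinant decouples into the intrinsic word-sign $\sigma(v)$ and the sign of the symmetric combination of minors of $\PP$ weighted by partial Schur functions.
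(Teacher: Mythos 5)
Your proposal is correct and is exactly the argument the paper leaves implicit: the corollary is an immediate consequence of Theorem~\ref{thm:B_det}, since the ordering hypothesis ($x_j>x_i$ whenever $i<j$ and $v_i=v_j$) makes every factor of the Vandermonde divisor $\V(v)$ strictly positive, so the sign of $\det M(v,\PP)$ decouples into $\sigma(v)$ times the sign of the Schur-weighted sum of minors. Your side remark is also accurate that $x_i>0$ plays no role in this particular sign equality (the partial Schur functions are polynomials, so no well-definedness issue arises either); that hypothesis earns its keep later, in Theorem~\ref{thm:construct} and Theorem~\ref{thm:C_univ}, where positivity of the variables makes the Schur values positive so that the sign of the sum is controlled by the minors of $\PP$.
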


Although at first sight the formula of the latter corollary seems devoid of structure, it nonetheless reveals precisely:\\
$\sqbullet$ how the sign of the determinant behaves depending on the word $v$ through $\sigma(v)$,\\
$\sqbullet$ which minors of~$\PP$ influence the value of the determinant, and\\
$\sqbullet$ how Schur functions (whose monomial all have positive coefficients) interact with the minors of $\PP$ to produce the determinant.\\
From this formula, we deduce that given a parameter tensor $\PP$, two words with the same abelian vector will yield the same determinant up to sign and appropriate index relabelings of the $x_i$'s.
Further, the set indexing the sum has a simple description: the only relevant minors of~$\PP$ are those obtained taking the columns according to the values in the abelian vector $\abel_v$ of $v$, that is independently of the combinatorial type!

\begin{example}[{Example~\ref{ex:model4} continued}]
\label{ex:model4_2}
Let $(W,S)=(B_2,\{s_1,s_2\})$.
The $S$-sign of~$v=s_1s_2s_1s_2$ is $-1$ and the Vandermonde divisor is $(x_3-x_1)(x_4-x_2)$.
To obtain the determinant of $M$, we should compute the minors of $\PP$ with 2 columns in the first block corresponding to the letter~$s_1$ and 2 columns in the second block corresponding to the letter $s_2$.
There are 9 minors in total, of which only two are non-zero: when $\ZZbij_1=\{(s_1,0),(s_1,1),(s_2,0),(s_2,2)\}$, and $\ZZbij_2=\{(s_1,0),(s_1,2),(s_2,0),(s_2,1)\}$.
We get $\det [\PP]_{\ZZbij_1}=-1$ and $\det [\PP]_{\ZZbij_2}=1$.
By Theorem~\ref{thm:B_det}, the determinant of $M$ is
\[
\det M = -(x_3-x_1)(x_4-x_2)\left((-1)\left(\sh_{(0,0),\{1,3\}}\sh_{(1,0),\{2,4\}}\right)+(1)\left(\sh_{(1,0),\{1,3\}}\sh_{(0,0),\{2,4\}}\right)\right).
\]
We get the values of the Schur polynomials from Example~\ref{ex:partial_schur}.
Thus, the determinant is indeed
\[
\det M = -{\left(x_{3} - x_{1}\right)} {\left(x_{4} - x_{2}\right)}{\left(x_{1} - x_{2} + x_{3} - x_{4}\right)}.
\]
The sum in the formula of Theorem~\ref{thm:B_det} for the determinant is in fact a tensor in $V\otimes V$, where~$V$ is the vector space whose basis elements are labeled by the partitions of length $2$ with parts of size at most $1$:
\[
\resizebox{\hsize}{!}{$
\begin{blockarray}{rrrr}
\begin{block}{rrrr}
& \sh_{(0,0),\{2,4\}} &  \sh_{(1,0),\{2,4\}} & \sh_{(1,1),\{2,4\}} \\
\end{block}
\begin{block}{r(rcc)}
\sh_{(0,0),\{1,3\}} & 0  & -1 & 0 \\ 
\sh_{(1,0),\{1,3\}} & 1 & 0 & 0 \\ 
\sh_{(1,1),\{1,3\}} & 0  & 0 & 0 \\ 
\end{block}
\end{blockarray}
= -\sh_{(0,0),\{1,3\}}\otimes\sh_{(1,0),\{2,4\}}+\sh_{(1,0),\{1,3\}}\otimes\sh_{(0,0),\{2,4\}}.
$}
\]

\end{example}

\begin{example}
This result also applies on any matrix with univariate polynomials in its columns.
Consider the matrix
\[
M=
\left(\begin{array}{lll}
x_{1} + 1 & x_{2} + 1 & x_{3} + 1 \\
x_{1}^{2} + x_{1} & x_{2}^{2} + x_{2} & x_{3}^{2} + x_{3} \\
x_{1}^{2} + 1 & x_{2}^{2} + 1 & x_{3}^{2} + 1
\end{array}\right).
\]
It can be written using the coefficients and variables matrices as:
\[
M = \bigoplus_{i=1}^3
\left(\begin{array}{rrr}
1 & 1 & 0 \\
0 & 1 & 1 \\
1 & 0 & 1 
\end{array}\right)
\hspace{0.1cm}\times
\left(\begin{array}{rrrrrrrrr}
1 & x_{1} & x_{1}^{2} & 0 & 0 & 0 & 0 & 0 & 0 \\
0 & 0 & 0 & 1 & x_{2} & x_{2}^{2} & 0 & 0 & 0 \\
0 & 0 & 0 & 0 & 0 & 0 & 1 & x_{3} & x_{3}^{2}
\end{array}\right)^{\top}.
\]
Since interchanging variables is equivalent to permuting columns, the parameter tensor is a usual matrix
\[
P=
\left(\begin{array}{rrr}
1 & 1 & 0 \\
0 & 1 & 1 \\
1 & 0 & 1 
\end{array}\right)
\]
and there is only one subset $\ZZbij\in\mathfrak{Z}_{(3)}$, i.e. all the columns of $P$.
The $S$-sign of the word $s_1s_1s_1$ is $+1$.
The Vandermonde part is $(x_3-x_1)(x_3-x_2)(x_2-x_1)$.
The determinant of $P$ is $2$ and $\sh_{(0,0,0)}=1$.
Using Theorem~\ref{thm:B_det} we get
\[
\det M = 2(x_3-x_1)(x_3-x_2)(x_2-x_1).
\]
\end{example}

\begin{example}[{Dual Cauchy Identity, \cite[Theorem~7.14.3]{stanley_enumerative_1999}}]
\label{ex:dual_cauchy}
The theorem can also be applied to reobtain a proof of the dual Cauchy identity, that we illustrate in an example.
Let $a=2$ and $b=4$. Consider the matrix
\[
M_{a,b}=M_{2,4}=
\left(\begin{array}{rrrrrr}
1         &         1 & -y_{1}^{5} & -y_{2}^{5} & -y_{3}^{5} & -y_{4}^{5} \\
x_{1}     & x_{2}     &  y_{1}^{4} &  y_{2}^{4} &  y_{3}^{4} &  y_{4}^{4} \\
x_{1}^{2} & x_{2}^{2} & -y_{1}^{3} & -y_{2}^{3} & -y_{3}^{3} & -y_{4}^{3} \\
x_{1}^{3} & x_{2}^{3} &  y_{1}^{2} &  y_{2}^{2} &  y_{3}^{2} &  y_{4}^{2} \\
x_{1}^{4} & x_{2}^{4} & -y_{1}     & -y_{2}     & -y_{3}     & -y_{4}     \\
x_{1}^{5} & x_{2}^{5} &      1     &          1 &          1 & 1
\end{array}\right).
\]
If $x_1=x_2$, then the determinant of $M_{2,4}$ vanishes, and similarly for two distinct $y_i$ and $y_j$.
Thus, the Vandermonde divisor is ${\left(x_{2} - x_{1}\right)} {\left(y_{2} - y_{1}\right)} {\left(y_{3} - y_{1}\right)} {\left(y_{4} - y_{1}\right)} {\left(y_{3} - y_{2}\right)} {\left(y_{4} - y_{2}\right)} {\left(y_{4} - y_{3}\right)}$.
Further, one notices that if $x_i=-y_j^{-1}$ then, the corresponding columns are colinear by a factor $-y_j^{5}$.
Hence the determinant of $M_{2,4}$ is
\[
\det M_{2,4}=\V(s_1s_1s_2s_2s_2s_2)\cdot \prod_{\substack{1\leq i \leq 2\\ 1\leq j\leq 4}} (1+x_iy_j).
\]
On the other hand, the corresponding parameter matrix is:
\[
P_{a,b} = P_{2,4} = 
\begin{blockarray}{ccccccc}
\begin{block}{ccccccc}
(s_1,0\dots 5) & (s_2,0) & (s_2,1) & (s_2,2) & (s_2,3) & (s_2,4) & (s_2,5)\\ 
\end{block}
\begin{block}{(c|cccccc)}
\multirow{6}{*}{$\Id_6$ } & 0 & 0 & 0 & 0 & 0 & -1 \\
& 0 & 0 & 0 & 0 & 1 & 0 \\
& 0 & 0 & 0 & -1 & 0 & 0 \\
& 0 & 0 & 1 & 0 & 0 & 0 \\
& 0 & -1 & 0 & 0 & 0 & 0 \\
& 1 & 0 & 0 & 0 & 0 & 0 \\
\end{block}
\end{blockarray}
\]
There are exactly $\binom{a+b}{a}=\binom{6}{2}=15$ non-zero minors of $P$ with 2 columns in the first part and 4 columns in the second which are all equal to 1.
Following Example~\ref{ex:model4_2}, Theorem~\ref{thm:B_det} gives
\[
\det M_{2,4}=\V(s_1s_1s_2s_2s_2s_2)\sum_{\lambda}\sh_{\lambda,\{x_1,x_2\}}\sh_{\lambda',\{y_1,y_2,y_3,y_4\}},
\]
where $\lambda'$ is the conjugate partition of $\lambda$ and the sum is over all partitions $\lambda$ of length $a=2$ with parts of size at most $b=4$.
Cancelling the Vandermonde divisor, we recover the dual Cauchy identity.
The general case works similarly.
\end{example}

\subsection{The parameter matrices behind Bergeron--Ceballos--Labb\'e's counting matrices}
\label{ssec:BCL_parameter}

In the article \cite{bergeron_fan_2015}, the construction of fans is based on a matrix
called ``counting matrix'', whose entries enumerate occurrences of certain subwords
contained in a fixed word. 
As it turns out, these counting matrices are signature matrices.
The factorizations of the determinants of counting matrices were critical in order to prove the correctness of the construction.
In view of the intricate description in the previous
section, the fact that counting matrices of type $A_3$---obtained via a simple
combinatorial rule---are signature matrices should be regarded
as a highly exceptional and not fully explained behavior.

Yet, Theorem~\ref{thm:B_det} gives a complete description of the factorizations of determinants of minors of counting matrices.
Furthermore, for any finite irreducible Coxeter group, it precisely dictates how one could obtain signature matrices through parameter matrices.
We revisit here these counting matrices using parameter matrices. 

\subsubsection{Type $A_1$}

We have $n=N=\nu=1$.
Parameter matrices are $(1\times 1)$-matrices $P=(p)$ containing the real number $p$ and the variables matrix $\T_W$ is $(1)$.
This way, given the only reduced word $v=s_1$, the model matrix $M(v,P)$ is the $(1\times 1)$-matrix $(p)$.
In order to be a signature matrix, the real number $p$ should be non-zero.
In \cite[Appendix]{bergeron_fan_2015}, the counting matrix is obtained by setting $p=1$.

\subsubsection{Type $A_2$}

We have $n=2$, $N=3$, and $\nu=2$.
Parameter tensors are $(3\times 2\times 2)$-dimensional, compare with Example~\ref{ex:model_mat_a2}.
For some given positive integer $m$, the counting matrix called $D_{s_1s_2,m}$ in \cite{bergeron_fan_2015} gives rise to the parameter matrix
\[
P_{s_1s_2,m}=
\begin{blockarray}{cccc}
\begin{block}{cccc}
(s_1,0) & (s_1,1) & (s_2,0) & (s_2,1)\\ 
\end{block}
\begin{block}{(cc|cc)}
1 & 0  & 0 & 0 \\
m & -1 & 0 & 1 \\
0 & 0  & 1 & 0 \\
\end{block}
\end{blockarray}\ .
\]
This parameter matrix has two non-zero minors $\{(s_1,0),(s_1,1),(s_2,0)\}$ and $\{(s_1,0)$, $(s_2,0)$, $(s_2,1)\}$, which are both equal to $-1$.
Further, the corresponding partial Schur functions $\Sh_{((0,0),(0)),(\{1,3\},\{2\})}$ and $\Sh_{((0),(0,0)),(\{2\},\{1,3\})}$ are both equal to~$1$.
For $v=s_1s_2s_1$, the model matrix $M(s_1s_2s_1,P_{s_1s_2,m})$ is
\[
M(s_1s_2s_1,P_{s_1s_2,m})=
\left(\begin{array}{rrr}
1 & 0 & 1 \\
-x_{1} + m & x_{2} & -x_{3} + m \\
0 & 1 & 0
\end{array}\right).
\]
For $v=s_2s_1s_2$, the model matrix $M(s_2s_1s_2,P_{s_1s_2,m})$ is
\[
M(s_2s_1s_2,P_{s_1s_2,m})=
\left(\begin{array}{rrr}
0 & 1 & 0 \\
x_{1} & -x_{2} + m & x_{3} \\
1 & 0 & 1
\end{array}\right).
\]
To get back the counting matrix, one has to set the parameter $x_i$ to be the position of the factor~$s_1s_2$ in which $v_i$ appears in $(s_1s_2)^m$, and remove $1$ if $v_i=s_1$.
This number fits exactly with how the counting matrix is defined in this case.
From this, we get that
\[
\begin{split}
\det M(s_1s_2s_1,P_{s_1s_2,m}) & = \sigma(s_1s_2s_1)(x_3-x_1)\cdot(-1\cdot 1) \\
			 &  =(-1)(x_3-x_1)(-1)=(x_3-x_1).
\end{split}
\]
The determinant of the model matrix $M(s_2s_1s_2,P_{s_1s_2,m})$ is similar.

\subsubsection{Type $A_3$}
\label{sssec:a3}

We have $n=3$, $N=6$, and $\nu=3$.
Parameter tensors are $(6\times 3\times 3)$-dimensional.
For some given positive integer $m$, the counting matrix $D_{s_1s_2s_3,m}$ from \cite{bergeron_fan_2015} gives rise to the parameter matrix
\[
P_{s_1s_2s_3,m}=
\begin{blockarray}{ccccccccc}
\begin{block}{ccccccccc}
0 & 1 & 2 & 3 & 4 & 5 & 6 & 7 & 8=\nu n -1 \\ 
(s_1,0) & (s_1,1) & (s_1,2) & (s_2,0) & (s_2,1) & (s_2,2) & (s_3,0) & (s_3,1) & (s_3,2) \\ 
\end{block}
\begin{block}{(ccc|ccc|ccc)}
1 & 0 & 0 & 0 & 0 & 0 & 0 & 0 & 0 \\
0 & 1 & 0 & m + 1 & -1 & 0 & 0 & 0 & 0 \\
0 & \frac{1}{2} & \frac{1}{2} & 0 & m + 1 & -1 & \binom{m+2}{2} & -m - \frac{3}{2} & \frac{1}{2} \\
0 & 0 & 0 & 1 & 0 & 0 & 0 & 0 & 0 \\
0 & 0 & 0 & 0 & 1 & 0 & m + 1 & -1 & 0 \\
0 & 0 & 0 & 0 & 0 & 0 & 1 & 0 & 0 \\
\end{block}
\end{blockarray}\ .
\]
Whereas, the counting matrix $D_{s_2s_1s_3,m}$ gives rise to the parameter matrix
\[
P_{s_2s_1s_3,m}=
\begin{blockarray}{ccccccccc}
\begin{block}{ccccccccc}
0 & 1 & 2 & 3 & 4 & 5 & 6 & 7 & 8=\nu n -1 \\ 
(s_1,0) & (s_1,1) & (s_1,2) & (s_2,0) & (s_2,1) & (s_2,2) & (s_3,0) & (s_3,1) & (s_3,2) \\ 
\end{block}
\begin{block}{(ccc|ccc|ccc)}
0 & 0 & 0 & 1 & 0 & 0 & 0 & 0 & 0 \\
m + 1 & -1 & 0 & 0 & 1 & 0 & 0 & 0 & 0 \\
0 & 0 & 0 & 0 & 1 & 0 & m + 1 & -1 & 0 \\
\binom{m+1}{2} & \frac{1}{2} & -\frac{1}{2} & 0 & 0 & 1 & \binom{m+1}{2} & \frac{1}{2} & -\frac{1}{2} \\
0 & 0 & 0 & 0 & 0 & 0 & 1 & 0 & 0 \\
1 & 0 & 0 & 0 & 0 & 0 & 0 & 0 & 0 \\
\end{block}
\end{blockarray}\ .
\]
Although the two parameter matrices look different, their non-zero minors are almost all equal.
In order to specify the columns of the parameter matrices compactly, we label the nine columns from $0$ to $\nu n -1 =8$ from left-to-right:

\[
\begin{array}{rrr}
\det [P_*]_{\{0,1,2|3,4|6\}} = \frac{1}{2},  & \det [P_*]_{\{0,1,2|3|6,7\}} = \frac{1}{2}, & \det [P_*]_{\{0,1|3,4,5|6\}} = -1 \\[0.5em]
\det [P_*]_{\{0,1|3,4|6,8\}} = -\frac{1}{2},  & \det [P_*]_{\{0,1|3,5|6,7\}} = 1, & \det [P_*]_{\{0,1|3|6,7,8\}} = -\frac{1}{2} \\[0.5em]
\det [P_*]_{\{0,2|3,4|6,7\}} = -\frac{1}{2},  & \det [P_*]_{\{0|3,4,5|6,7\}} = 1, & \det [P_*]_{\{0|3,4|6,7,8\}} = -\frac{1}{2} \\[0.5em]
\end{array}
\]
The only minor which is different is $\det [P_*]_{\{0,1|3,4|6,7\}}$ which is $0$ for $P_{s_1s_2s_3,m}$ and $1$ for~$P_{s_2s_1s_3,m}$.
This explains the very small differences in the formulas for the determinants in \cite[Table~2 and~3]{bergeron_fan_2015} although the way they are obtained are different.
The difference appears in a factor for words with abelian vector $(2,2,2)$.
For example, consider the reduced word $v=s_2s_1s_3s_2s_3s_1$.
Then $\Omega_v=(\{2,6\},\{1,4\},\{3,5\})$ and there are three minors of $P_{s_1s_2s_3,m}$ in $\mathfrak{Z}_{\abel_v}$ that are non-zero: $\ZZbij_1:=\{0,1,3,4,6,8\},\ZZbij_2:=\{0,1,3,5,6,7\},$ and $\ZZbij_3:=\{0,2,3,4,6,7\}$.
By Theorem~\ref{thm:B_det} and Example~\ref{ex:s_sign_a3} the determinant of the model matrix $M(v,P_{s_1s_2s_3,m})$ is
\[
\resizebox{\hsize}{!}{$
\begin{split}
\det M(v,P_{s_1s_2s_3,m}) & = (1) (x_4 - x_1) (x_6 - x_2) (x_5 - x_3) \left( \det [P]_{\ZZbij_1}\Sh_{\Lambda_{\ZZbij_1,\Omega_v}} + \det [P]_{\ZZbij_2}\Sh_{\Lambda_{\ZZbij_2,\Omega_v}} + \det [P]_{\ZZbij_3}\Sh_{\Lambda_{\ZZbij_3,\Omega_v}}\right), \\
& = (x_4 - x_1) (x_6 - x_2) (x_5 - x_3)\left( -\frac{1}{2}\cdot (x_3+x_5) + 1\cdot (x_1+x_4) -\frac{1}{2}\cdot(x_2+x_6)\right), \\
& = -\frac{1}{2}(x_4 - x_1) (x_6 - x_2) (x_5 - x_3)\left( x_3+x_5 - 2\cdot (x_1+x_4) +x_2+x_6\right), \\
& = -\frac{1}{2}(x_1 - x_4) (x_2 - x_6) (x_3 - x_5)\left( 2\cdot (x_1+x_4) -x_2-x_6-x_3-x_5 \right).
\end{split}
$}
\]
The last expression is in accordance with what is written in \cite[Table~3]{bergeron_fan_2015}.

\subsubsection{Type $A_4$}

As noticed in \cite[Section~9]{bergeron_fan_2015}, the construction using counting matrices in type~$A_4$ delivered a parameter matrix which was not generic enough.
Indeed, the reduced word $w=s_2s_1s_2s_3s_4s_2s_3s_2s_1s_2$ has abelian vector $(2,5,2,1)$ and the counting matrix gives three curves of degree $3$ in $\R^{10}$ where points should be taken.
Hence taking $5$ points on the curve corresponding to the letter~$s_2$ does not span a $5$-dimensional subspace, which is necessary for a dual simplicial cone
corresponding to the reduced word $w$.
Let $c=s_2s_4s_1s_3$ and $\wo(c):=s_2s_4s_1s_3s_2s_4s_1s_3s_2s_4$.
The reduced word $w$ appears first as a subword of $c^k\wo(c)$ when $k\geq 2$.
This explains the non-zero numbers in the fourth column of Table~7 of \cite{bergeron_fan_2015}, which represented (in particular) the word $w$.
This case epitomizes the \emph{fundamental} difference between realizing the \emph{cluster complex} as a simplicial fan and realizing the \emph{multi}-cluster complex as a simplicial fan.
When $k$ increases in the above word, certain reduced words which \emph{never} appear as subword in the cluster complex, suddenly appear and require a higher genericity.
In fact, the h\"ochstfrequenz is at least $ne^{\Omega(\sqrt{\log n/2})}$ and the parameter tensor should have at least this degree of genericity in order to produce a signature matrix.

\section{Universality of parameter matrices}
\label{sec:parameter_sign_mat}

In this section, we show that parameter tensors are universal in the following sense:

\begin{quote}
\textbf{Universality of parameter tensors.} Given a chirotopal realization of a subword complex $\Delta_W(p)$ supported by a vector configuration $\A$ and a Gale transform $\B\in\Gale(\A)$, there exists a parameter tensor $\PP_{\A}$ that parametrizes $\B$. Equivalently, $\B$ is the product of a variables tensor and a coefficient tensor $\C(p,\PP)$ given by some parameter tensor~$\PP_\A$.
\end{quote}

Concretely, consider some matrix $\A\in\R^{(m-N)\times m}$ realizing $\Delta_W(p)$ as a chirotope, and $\B\in\Gale(\A)$.
Recall that the $i$-th column of $A$ corresponds to the $i$-th letter~$p_i$ of~$p$.
For each letter $s_j\in S$, we proceed as follows.
Consider the columns $i\in [m]$ of $\B$ such that $p_i=s_j$, i.e. the occurrences of $s_j$ in $p$.
It is possible to find a polynomial of degree at most $|p|_j-1$, for each coordinate $k\in [N]$, that interpolates the values of the $k$-th coordinate of the columns corresponding to the occurrences of $s_j$.
There are many possibilities to do so; to get a specific choice, we consider the two-dimensional points $(i,\B(k,i))$ for the $k$-th coordinate, where $\B(k,i)$ denotes the $k$-th entry of the $i$-th column of $\B$.
This way, as $i$ increases, so does the first entry $x_i:=i$.
Doing this for each letter $s_j\in S$, we get a parameter tensor $\PP_\B$ with $\nu$ replaced by $\max\{|p|_j-1~:~j\in[n]\}$.
In order to know if $\B$ is a signature matrix for $p$, we use Corollary~\ref{cor:D_sign_of_model}.

\begin{theorem}
\label{thm:construct}
Let $p\in S^m$ and $\A\in\R^{(m-N)\times m}$.
Further let $\PP_\B$ denote the parameter tensor associated to a Gale transform~$\B\in\Gale(\A)$ obtained as above.
In particular, assume that $x_i>0$ for all $i\in[N]$ and $x_i<x_j$ whenever $i<j$ and $v_i= v_j$.
The matrix $\B$ is a signature matrix for $p$ if and only if
\[
\sign\left(
\sum_{\ZZbij\in\mathfrak{Z}_{\abel_v}}
\det [\PP_\B]_{\ZZbij}
\Sh_{\Lambda_{\ZZbij},\Omega_v}
\right) = \punc(v) = \sigma(v)\tau(v)
\]
for every reduced word $v$ of $\wo$ which is a subword of $p$.
\end{theorem}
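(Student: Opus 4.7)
The plan is to leverage Theorem~\ref{thm:B_det} and Corollary~\ref{cor:D_sign_of_model} to translate the signature matrix condition into a sign condition on evaluated polynomials. The key observation is that the polynomial-interpolation construction of $\PP_\B$ is designed precisely so that submatrices of $\B$ indexed by subword occurrences in $p$ become specific evaluations of model matrices.

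First, I would verify that for any occurrence $Z = \{z_1 < \cdots < z_N\}$ of a reduced word $v \in \Red(\wo)$ in $p$, the matrix $[\B]_Z$ coincides with the model matrix $M(v, \PP_\B)$ evaluated at $x_i = z_i$ for $i \in [N]$. This follows directly from the construction of $\PP_\B$: for each letter $s_j \in S$ and each coordinate $k \in [N]$, the row of $\PP_\B$ indexed by $(k, s_j, \cdot)$ encodes a polynomial of degree at most $|p|_j - 1$ passing through the points $(i, \B(k,i))$ for every position $i \in [m]$ with $p_i = s_j$. Since each $z_i \in Z$ satisfies $p_{z_i} = v_i$, the column of $[\B]_Z$ at position $z_i$ is exactly the vector of polynomial evaluations at $x = z_i$ for the letter $v_i$---which is precisely the $i$-th column of $M(v, \PP_\B)$ with $x_i = z_i$.

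Next, I apply Theorem~\ref{thm:B_det} to factorize
\[
\det M(v, \PP_\B) = \sigma(v) \V(v) \sum_{\ZZbij \in \mathfrak{Z}_{\abel_v}} \det[\PP_\B]_{\ZZbij} \, \Sh_{\Lambda_{\ZZbij}, \Omega_v},
\]
and evaluate at $x_i = z_i$. Since the $z_i$ are positive integers, strictly increasing in $i$, and in particular $z_i < z_j$ whenever $v_i = v_j$ with $i<j$, the hypotheses of Corollary~\ref{cor:D_sign_of_model} are automatically satisfied; moreover $\V(v)$ is strictly positive on these values. Consequently,
\[
\sign\bigl(\det[\B]_Z\bigr) = \sigma(v) \cdot \sign\left(\sum_{\ZZbij \in \mathfrak{Z}_{\abel_v}} \det[\PP_\B]_{\ZZbij} \, \Sh_{\Lambda_{\ZZbij}, \Omega_v}\right).
\]

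Finally, by Definition~\ref{def:signature_matrix}, $\B$ is a signature matrix for $p$ if and only if $\sign(\det[\B]_Z) = \tau(v)$ for every reduced word $v \in \Red(\wo)$ and every occurrence $Z$ of $v$ in $p$. Substituting the identity above and multiplying by $\sigma(v) \in \{\pm 1\}$, this is equivalent to the sign of the partial-Schur sum equaling $\sigma(v)\tau(v) = \punc(v)$, which is the claimed condition. The main subtle point to check is that the polynomial-interpolation data fits the tensor formalism of Section~\ref{sec:model_mat}: one must use $d := \max_j\{|p|_j\}$ in place of $\nu$ in the variables tensor, but then $\mathfrak{Z}_{\abel_v}$ and the partial Schur functions remain well-defined since $|v|_j \leq \nu \leq |p|_j$ for any reduced subword $v$ of $\wo$ occurring in $p$, so the factorization of Theorem~\ref{thm:B_det} transfers without modification.
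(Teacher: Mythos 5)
Your proof is correct and takes essentially the same route as the paper, which leaves this argument implicit in the discussion surrounding the construction: identify each submatrix $[\B]_Z$ for an occurrence $Z$ of $v$ in $p$ with the model matrix $M(v,\PP_\B)$ evaluated at the strictly increasing positive values $x_i=z_i$, apply Theorem~\ref{thm:B_det} together with Corollary~\ref{cor:D_sign_of_model} (the Vandermonde divisor being positive), and compare with Definition~\ref{def:signature_matrix} to convert $\sign\det[\B]_Z=\tau(v)$ into $\sign$ of the partial-Schur sum equaling $\sigma(v)\tau(v)=\punc(v)$. One inessential slip: the middle inequality in your chain $|v|_j\leq\nu\leq|p|_j$ is false in general (nothing forces every letter to occur at least $\nu$ times in $p$), but the well-definedness you are after only requires the trivial bound $|v|_j\leq|p|_j$, valid for any reduced word $v$ occurring as a subword of $p$, so the argument stands.
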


\begin{proof}
This is a consequence of Definition~\ref{def:signature_matrix} and Corollary~\ref{cor:D_sign_of_model}.
\end{proof}

After doing a braid move of length $2$ on $v$, the variables $x_i$'s on the left-hand side of the equation get relabeled, the minors $[\PP_\B]_\ZZbij$'s remain unchanged, and the right-hand side remains invariant, thanks to Corollary~\ref{cor:B_commutations}.
This yields exactly one equation up to labeling of the $x_i$'s to be fulfilled for each commutation class.
Furthermore, two reduced words with the same abelian vector have equal left-hand sides up to relabeling of the $x_i$'s.
Hence for each abelian vector $\abel_v$, there is either~1 or 2 equalities up to relabeling of the $x_i$'s that have to be fulfilled depending on whether the punctual sign is constant for the abelian vector~$\abel_v$, leading to a constant number of equations to be satisfied for each Coxeter group $W$.

The definition of signature matrix involves checking an equation for \emph{every} occurrence of \emph{every} reduced word, that is for every facet, directly on the Gale transform $\B$.
Under the condition that the numbers $\{x_i\}_{i\in[r]}$ increase on occurrences of letters in $S$, these conditions can be expressed with at most two explicit conditions per abelian vector of combinatorial type of facet and the values of the $x_i$'s play a less significant role.
Thus, the previous theorem allows to reduce significantly the study of signature matrices to that of minors of parameter tensors given by abelian vectors of combinatorial types of facets of subword complexes.

\begin{example}[Symmetric group $\Sym_{4}=A_3$]
There are $5$ abelian vectors for $\wo$ (see Table~\ref{tab:abelian_vectors_A} in the appendix): $(3,2,1),(2,3,1),(2,2,2),(1,3,2),(1,2,3)$.
Thus, there are 5 types of determinants of model matrices by Theorem~\ref{thm:construct}, one of them is described in Section~\ref{sssec:a3}.
Now, using Example~\ref{ex:punc_a3}, we get that determinants should be positive for reduced expressions with abelian vectors $(3,2,1)$ and $(1,3,2)$, and negative for reduced expressions with abelian vectors $(1,2,3)$ and $(2,3,1)$.
The determinants for reduced expressions with abelian vectors $(2,2,2)$ have to be either positive or negative depending on the expression.
Hence we get a total of six types of inequalities given by prescribing signs of minors of $\PP_\A$.
For example, the minors $\det[\PP_\B]_\ZZbij$ for $\ZZbij\in(\mathfrak{Z}_{(3,2,1)}\cup\mathfrak{Z}_{(1,3,2)})$ should be non-negative, with at least one positive.
Similarly, for $(1,2,3)$ and $(2,3,1)$ they should be non-positive, with at least one negative.
Finally, for $(2,2,2)$ it should be non-negative for the words in $\{13\}2\{13\}2$ and non-positive for the words in $2\{13\}2\{13\}$.
Indeed, as Section~\ref{sssec:a3} reveals, it is possible to find parameter tensors with at most 10 non-zero minors with the appropriate sign patterns.
What is more, the involved Schur functions have degree 0 or 1, i.e. are constant or linear leading to a polyhedral realization space.
Within a set $\mathfrak{Z}_{\abel_v}$, the signs of the minor are all equal or 0 except for the abelian vector $(2,2,2)$ which has both signs and the relative values of the $x_i$'s imposed by the combinatorial construction leads to a valid chirotope.
\end{example}

The following universality result follows from the above discussion and Theorem~\ref{thm:construct}.

\begin{mainthm}[Universality of parameter tensors]
\label{thm:C_univ}
Let $p\in S^m$ and $\A\in\R^{(m-N)\times m}$.
If $\A$ is a chirotopal realization of the subword complex $\Delta_W(p)$, then there exist a parameter tensor $\PP_\A$, and $m$ real numbers $x_i>0$, with $i\in [m]$, such that
\begin{itemize}
	\item $i<j$ and $p_i=p_j$ implies $x_i<x_j$, and
	\item for every occurrence of each reduced word $v$ of $\wo$ which is a subword of $p$, the following equality holds \[
\sign\left(
\sum_{\ZZbij\in\mathfrak{Z}_{\abel_v}}
\det [\PP_\A]_{\ZZbij}
\Sh_{\Lambda_{\ZZbij},\Omega_v}
\right) = \punc(v) = \sigma(v)\tau(v).
\]
\end{itemize}
\end{mainthm}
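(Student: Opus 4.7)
\medskip

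\noindent\textbf{Proof plan.} The strategy is to derive Theorem~\ref{thm:C_univ} from Theorem~\ref{thm:construct} together with the polynomial-interpolation construction of a parameter tensor from a Gale transform described in the paragraph opening this section. The statement is essentially a repackaging: chirotopality of $\A$ forces any Gale transform of $\A$ to be a signature matrix, and the interpolation converts the defining sign condition of a signature matrix into the claimed sign identity via Corollary~\ref{cor:D_sign_of_model}.

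First I would pick any Gale transform $\B\in\Gale(\A)$. Because $\A$ realizes the subword complex $\Delta_W(p)$ as a chirotope, the pair $(\A,\B)$ satisfies conditions (B) and (F) of Lemma~\ref{lem:simpl_fan}; Proposition~\ref{prop:signature_matrix} then states that this is equivalent to $\B$ being a signature matrix for $p$ in the sense of Definition~\ref{def:signature_matrix}. Unpacking, $\sign(\det [\B]_Z)=\tau(v)$ for every occurrence $Z\subseteq[m]$ of every reduced word $v$ of $\wo$ appearing in $p$.

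Second I would perform the interpolation. Choose positive reals $x_1<x_2<\cdots<x_m$ (for instance $x_i:=i$), which automatically fulfills the monotonicity requirement. For each letter $s\in S$ and each row index $k\in[N]$, Lagrange-interpolate the $|p|_s$ scalars $\{\B(k,i):p_i=s\}$ at the $|p|_s$ distinct nodes $\{x_i:p_i=s\}$ by a polynomial $f_{k,s}$ of degree at most $|p|_s-1$; the coefficients of all these polynomials are the entries of a parameter tensor $\PP_\A$ whose third index ranges over $\{0,1,\dots,\max_s|p|_s-1\}$. By construction the $i$-th column of $\B$ is recovered as the evaluation at $x_i$ of the $i$-th column of $\C(p,\PP_\A)\cdot\T$, so for any occurrence $Z$ of a reduced word $v$ in $p$ the matrix $[\B]_Z$ coincides with the model matrix $M(v,\PP_\A)$ specialised at $\{x_i\}_{i\in Z}$. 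Since these variables satisfy the positivity and monotonicity hypotheses of Corollary~\ref{cor:D_sign_of_model}, that corollary gives the equality $\sign\det[\B]_Z=\sigma(v)\,\sign\bigl(\sum_{\ZZbij\in\mathfrak{Z}_{\abel_v}}\det[\PP_\A]_\ZZbij\,\Sh_{\Lambda_\ZZbij,\Omega_v}\bigr)$. Combining with $\sign\det[\B]_Z=\tau(v)$ from the first step and multiplying both sides by $\sigma(v)$ (which squares to $+1$) delivers the announced identity $\sigma(v)\tau(v)=\punc(v)$.

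The main obstacle is not mathematical depth but bookkeeping: one has to verify that allowing the third dimension of $\PP_\A$ to exceed the h\"ochstfrequenz $\nu$ causes no harm. This holds because every reduced word $v$ of $\wo$ satisfies $\abel_v(s)\le\nu$ for all $s\in S$; consequently the collection $\mathfrak{Z}_{\abel_v}$ selects only minors of $\PP_\A$ of the correct shape, and the additional coefficient columns produced by the interpolation play no role in the relevant sum. Once this is checked, the statements of Theorem~\ref{thm:B_det} and Corollary~\ref{cor:D_sign_of_model} apply verbatim to $\PP_\A$ and close the argument.
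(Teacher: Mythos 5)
Your outline follows the paper's own route: the paper gives no separate proof of Theorem~\ref{thm:C_univ} beyond declaring it a consequence of the interpolation construction and Theorem~\ref{thm:construct}, and your two steps (chirotopality forces the Gale transform to be a signature matrix; Lagrange interpolation plus Corollary~\ref{cor:D_sign_of_model} converts the signature condition into the displayed sign identity) reconstruct exactly that discussion. Two justification steps, however, need repair. First, Proposition~\ref{prop:signature_matrix} does not state what you attribute to it: it characterizes \emph{complete simplicial fan} realizations and its equivalence requires the injectivity condition (I) in addition to (S), whereas chirotopality only provides (B) and (F) of Lemma~\ref{lem:simpl_fan}. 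What you actually need is that (B) and (F) amount to the sign prescriptions of Definition~\ref{def:signature_matrix}: (B) makes every facet minor nonzero, (F) forces the relative signs of adjacent facets, and connectedness of the facet adjacency graph then determines all signs up to one global factor, which matches $\tau$ precisely because $\tau$ is defined (up to global sign) by these flip-compatibility changes; the residual global sign is absorbed by replacing $\B$ with a Gale transform having one row negated. The paper elides this too, but since you cite a specific proposition, the citation as written is incorrect.

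Second, your bookkeeping claim about the degree is wrong as stated. When $\max_j|p|_j-1$ exceeds $\nu-1$, the parameter tensor $\PP_\A$ has columns $(s,r)$ with $r\geq\nu$, and these columns \emph{do} enter the Binet--Cauchy expansion: the collection $\mathfrak{Z}_{\abel_v}$ constrains only the number $c_i$ of columns of type $(s_i,\cdot)$ in each $\ZZbij$, not the range of the second index $r$, so minors using the extra columns generically contribute nonzero summands multiplied by partial Schur functions of higher degree. If one restricted the sum to $r<\nu$ as your phrase ``play no role in the relevant sum'' suggests, the sign identity could fail. The correct repair is different and easier: the proof of Theorem~\ref{thm:B_det} is degree-agnostic---nothing in it uses $d=\nu$---so the factorization and Corollary~\ref{cor:D_sign_of_model} hold verbatim with $\nu$ replaced by $\max_j|p|_j$, provided $\mathfrak{Z}_{\abel_v}$ is understood over the enlarged column set $S\times\{0,\dots,\max_j|p|_j-1\}$; this is how the paper implicitly reads the sum in Theorem~\ref{thm:C_univ}. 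With these two adjustments your argument is complete and coincides with the intended one.
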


Thus, in order to obtain geometric realizations (either as chirotopes, fans or polytopes) of subword complexes, the family $\mathfrak{X}$ of parameter tensors in the previous theorem constitute a natural and universal object to study.
This theorem opens the door to the usage of the $S$-sign and $T$-sign functions and partial Schur functions to study covering of realization spaces of cyclic polytopes, (generalized) associahedra, and more generally of subword complexes.
Finally, we draw the following two conclusions.

$\sqbullet$ \underline{\smash{Degree $\max\{|p|_j-1~:~j\in[n]\}$}}: This construction does not come with an upper bound on the degree of the interpolating polynomials.
The degree depends on the abelian vector of the word~$p$.
Is it possible to give an upper bound on the degree necessary that is independent of $p$?
Given a finite Coxeter group, as a first approximation, one may consider a shortest word $p_{\text{univ}}$ that contains an occurrence of every reduced expressions in $\Red(\wo)$, see~\cite[Question~6.1]{knutson_subword_2004}.
Obtaining $p_{\text{univ}}$ is a particular case of the ``shortest common supersequence problem'' which is known to be NP-complete \cite{raiha_shortest_1981}.
Are the degrees obtained with such a word an upper bound?
A priori, there is no reason supporting the fact that an upper bound on the degrees exists for each Coxeter group.
The non-existence of an upper bound would place yet another difficulty to overcome to provide geometric realizations of subword complexes.
However, small cases seem to indicate that such upper bounds do exist, as described in Section~\ref{ssec:BCL_parameter}.

\begin{example}[Example~\ref{ex:model4_2} continued, $2k$-dimensional cyclic polytopes on $2k+4$ vertices]
Let $W=B_2$ and $S=\{s_1,s_2\}$.
Let ${k\geq 1}$, $c=s_1s_2$, $\wo(c)=s_1s_2s_1s_2$, and $p=c^k\wo(c)$.
We now consider the curves $f_1(x)=(1,0,-x,x^2)$ and $f_2(x)=(0,1,x,-x^2)$ and we assign a number $x_i>0$ to each letter $p_i$ of $p$, such that $x_j>x_i$ whenever $p_i=p_j$ and $j<i$.
If $p_i=s_1$, we evaluate~$f_1$ at~$x_i$, otherwise $p_i=s_2$ and we evaluate $f_2$ at $x_i$ to assign a vector in $\R^4$ to each letter of $p$. 
There are two reduced words $s_1s_2s_1s_2$ and $s_2s_1s_2s_1$ for $\wo$.
Using the computation in Example~\ref{ex:model4_2}, we get the following conditions
\[
\begin{split}
-1 = \punc(s_1s_2s_1s_2) = \sign (x_{i_1} - x_{i_2} + x_{i_3} - x_{i_4}) & \qquad\text{ if } p_{i_1}p_{i_2}p_{i_3}p_{i_4}=s_1s_2s_1s_2, \\
1 = \punc(s_2s_1s_2s_1) = \sign (-x_{i_1} + x_{i_2} - x_{i_3} + x_{i_4}) & \qquad\text{ if } p_{i_1}p_{i_2}p_{i_3}p_{i_4}=s_2s_1s_2s_1,
\end{split}
\]
to get a signature matrix.
These conditions are equivalent to $x_1<x_2<\dots < x_{2k+3}< x_{2k+4}$.
This comes as no surprise, since this is an instance of the $2k$-dimensional cyclic polytope on $2k+4$ vertices \cite[Section~6.4]{ceballos_subword_2014}.
Taking $x_i=i$, we can verify that all conditions are satisfied and we get a signature matrix for the boundary complex of the cyclic polytope on $2k+4$ vertices.
\end{example}

$\sqbullet$ \underline{\smash{Relation to the halving lines problem}}: The geometric realization of subword complexes and the halving line problem are related in the following sense. 
On the one hand, if every subword complex of type $A_n$ admits a realization using parameter matrices as above, in particular one has a realization for the word $p_{\text{univ}}$. 
Since every set of $n+1$ points in general position on the plane leads to a reduced expression contained in $p_{\text{univ}}$, one gets an upper bound on the halving line problem by taking the maximal degree of the polynomials involved in the realization for $p_{\text{univ}}$.
Thus, realizations of subword complexes of type~$A$ with words containing $p_{\text{univ}}$ as a subword provide upper bounds for the number of halving lines.
On the other hand, solving the halving line problem for $n+1$ points delivers a first approximation of the size of a word $p_{\text{univ}}'$ that contains an occurrence of every reduced word in $\Red(\wo)$ up to commutation.
This way, one can get a lower bound on the size of the word $p_{\text{univ}}$ and a lower bound on the degree of the polynomials involved to realize all subword complexes. 

\newpage
\newcommand{\etalchar}[1]{$^{#1}$}
\providecommand{\bysame}{\leavevmode\hbox to3em{\hrulefill}\thinspace}
\providecommand{\MR}{\relax\ifhmode\unskip\space\fi MR }
\providecommand{\MRhref}[2]{%
  \href{http://www.ams.org/mathscinet-getitem?mr=#1}{#2}
}
\providecommand{\href}[2]{#2}

\appendix

\newpage

\section{Some Abelian vectors of reduced words of the longest elements}
\label{app:abelian vectors}

In the tables below, we give the possible abelian vectors of the longest element $\wo$ for the finite irreducible Coxeter groups of small rank.
The computations used {\tt Sagemath}'s implementation of Coxeter groups to generate all reduced word \cite{sagemath}.
The generation of the reduced word proceeds without much difficulty; the current bottleneck being that in types $A,B,D,H$ of higher ranks and other types, the computations all require more than 256GB of RAM memory.

\begin{table}[!ht]
\begin{center}
\resizebox{0.75\textwidth}{!}{
\begin{tabular}{c|l}
Type $A_n$ & Abelian vectors of $\wo$ \\\hline
\begin{tikzpicture}[vertex/.style={inner sep=1pt,circle,draw=black,fill=blue,thick},
                    deux/.style={thick,blue},
		    trois/.style={thick,dashed,red}]

\node[vertex,label=left:$s_1$] (s1) at (0,0) {};

\end{tikzpicture} & \begin{tikzpicture}\node at (0,0) {$\{(1)\}$};\end{tikzpicture} \\\hline
\begin{tikzpicture}[vertex/.style={inner sep=1pt,circle,draw=black,fill=blue,thick},
                    deux/.style={thick,blue},
		    trois/.style={thick,dashed,red}]

\node[vertex,label=left:$s_1$] (s1) at (0,0) {};
\node[vertex,label=right:$s_2$] (s2) at (1,0) {};

\draw[deux] (s1) -- (s2);

\end{tikzpicture} & \begin{tikzpicture}\node at (0,0) {$\{(2,1),(1,2)\}$};\end{tikzpicture} \\\hline
\begin{tikzpicture}[vertex/.style={inner sep=1pt,circle,draw=black,fill=blue,thick},
                    deux/.style={thick,blue},
		    trois/.style={thick,dashed,red}]

\node[vertex,label=left:$s_1$] (s1) at (0,0) {};
\node[vertex,label=above:$s_2$] (s2) at (1,0) {};
\node[vertex,label=right:$s_3$] (s3) at (2,0) {};

\draw[deux] (s1) -- (s2) -- (s3);

\end{tikzpicture} & \begin{tikzpicture}\node at (0,0) {$\{(3,2,1),(2,3,1),(2,2,2),(1,3,2),(1,2,3)\}$};\end{tikzpicture} \\\hline
\begin{tikzpicture}[vertex/.style={inner sep=1pt,circle,draw=black,fill=blue,thick},
                    deux/.style={thick,blue},
		    trois/.style={thick,dashed,red}]

\node[vertex,label=left:$s_1$] (s1) at (0,0) {};
\node[vertex,label=above:$s_2$] (s2) at (1,0) {};
\node[vertex,label=above:$s_3$] (s3) at (2,0) {};
\node[vertex,label=right:$s_4$] (s4) at (3,0) {};

\draw[deux] (s1) -- (s2) -- (s3) -- (s4);

\end{tikzpicture} & \begin{tabular}{l}
             $\{(4, 3, 2, 1), (3, 4, 2, 1), (3, 3, 3, 1),$ \\ 
               $(3, 3, 2, 2), (3, 2, 4, 1), (3, 2, 3, 2),$ \\
               $(2, 5, 2, 1), (2, 4, 3, 1), (2, 4, 2, 2),$ \\
               $(2, 3, 4, 1), (2, 3, 3, 2), (2, 3, 2, 3),$ \\
               $(2, 2, 4, 2), (2, 2, 3, 3), (1, 4, 3, 2),$ \\
               $(1, 4, 2, 3), (1, 3, 4, 2), (1, 3, 3, 3),$ \\
               $(1, 2, 5, 2), (1, 2, 4, 3), (1, 2, 3, 4)\}$ \\
	     \end{tabular}\\\hline
\begin{tikzpicture}[vertex/.style={inner sep=1pt,circle,draw=black,fill=blue,thick},
                    deux/.style={thick,blue},
		    trois/.style={thick,dashed,red}]

\node[vertex,label=left:$s_1$] (s1) at (0,0) {};
\node[vertex,label=above:$s_2$] (s2) at (1,0) {};
\node[vertex,label=above:$s_3$] (s3) at (2,0) {};
\node[vertex,label=above:$s_4$] (s4) at (3,0) {};
\node[vertex,label=right:$s_5$] (s5) at (4,0) {};

\draw[deux] (s1) -- (s2) -- (s3) -- (s4) -- (s5);

\end{tikzpicture} & \begin{tabular}{l}
             $97$ abelian vectors with coordinatewise \\
	     minimum $(1,2,3,2,1)$ and maximum $(5,6,6,6,5)$.
	     \end{tabular}\\
\end{tabular}}
\end{center}
\caption{Abelian vectors of the reduced words for the longest element in type $A$}
\label{tab:abelian_vectors_A}
\end{table}

\vspace{-0.25cm}
\begin{table}[!ht]
\begin{center}
\resizebox{0.75\textwidth}{!}{
\begin{tabular}{c|l}
Type $B_n$ & Abelian vectors of $\wo$ \\\hline
\begin{tikzpicture}[vertex/.style={inner sep=1pt,circle,draw=black,fill=blue,thick},
                    deux/.style={thick,blue},
		    trois/.style={thick,dashed,red}]

\node[vertex,label=left:$s_1$] (s1) at (0,0) {};
\node[vertex,label=right:$s_2$] (s2) at (1,0) {};

\draw[deux] (s1) -- node[inner sep=0pt,midway,label=above:{\color{black} $4$}] {} (s2);

\end{tikzpicture} & \begin{tikzpicture}\node at (0,0) {$\{(2,2)\}$};\end{tikzpicture} \\\hline
\begin{tikzpicture}[vertex/.style={inner sep=1pt,circle,draw=black,fill=blue,thick},
                    deux/.style={thick,blue},
		    trois/.style={thick,dashed,red}]

\node[vertex,label=left:$s_1$] (s1) at (0,0) {};
\node[vertex,label=above:$s_2$] (s2) at (1,0) {};
\node[vertex,label=right:$s_3$] (s3) at (2,0) {};

\draw[deux] (s1) -- node[inner sep=0pt,midway,label=above:{\color{black} $4$}] {} (s2) -- (s3);

\end{tikzpicture} & \begin{tikzpicture}\node at (0,0) {$\{(3,4,2),(3,3,3)\}$};\end{tikzpicture} \\\hline
\begin{tikzpicture}[vertex/.style={inner sep=1pt,circle,draw=black,fill=blue,thick},
                    deux/.style={thick,blue},
		    trois/.style={thick,dashed,red}]

\node[vertex,label=left:$s_1$] (s1) at (0,0) {};
\node[vertex,label=above:$s_2$] (s2) at (1,0) {};
\node[vertex,label=above:$s_3$] (s3) at (2,0) {};
\node[vertex,label=right:$s_4$] (s4) at (3,0) {};

\draw[deux] (s1) -- node[inner sep=0pt,midway,label=above:{\color{black} $4$}] {} (s2) -- (s3) -- (s4);

\end{tikzpicture} & \begin{tikzpicture}\node at (0,0) {$\{(4, 6, 4, 2), (4, 6, 3, 3), (4, 5, 5, 2), (4, 5, 4, 3), (4, 4, 6, 2), (4, 4, 5, 3), (4, 4, 4, 4)\}$};\end{tikzpicture} \\\hline
\end{tabular}}
\end{center}
\caption{Abelian vectors of the reduced words for the longest element in type $B$}
\label{tab:abelian_vectors_B}
\end{table}

\vspace{-0.25cm}
\begin{table}[!ht]
\begin{center}
\resizebox{0.75\textwidth}{!}{
\begin{tabular}{c|l}
Type $D_n$ & Abelian vectors of $\wo$ \\\hline
\begin{tikzpicture}[vertex/.style={inner sep=1pt,circle,draw=black,fill=blue,thick},
                    deux/.style={thick,blue},
		    trois/.style={thick,dashed,red}]

\node[vertex,label=left:$s_1$] (s1) at (0,-0.5) {};
\node[vertex,label=left:$s_2$] (s2) at (0,0.5) {};
\node[vertex,label=above:$s_3$] (s3) at (1,0) {};
\node[vertex,label=right:$s_4$] (s4) at (2,0) {};

\draw[deux] (s1) -- (s3);
\draw[deux] (s2) -- (s3) -- (s4);

\end{tikzpicture} & \begin{tikzpicture}\node[align=left] at (0,0) {$\{(4, 2, 4, 2), (3, 3, 4, 2), (3, 3, 3, 3),$ \\
             $(3, 2, 5, 2), (3, 2, 4, 3), (2, 4, 4, 2),$ \\
             $(2, 3, 5, 2), (2, 3, 4, 3), (2, 2, 6, 2),$ \\
             $(2, 2, 5, 3), (2, 2, 4, 4) \}$ \\};\end{tikzpicture} \\\hline
\begin{tikzpicture}[vertex/.style={inner sep=1pt,circle,draw=black,fill=blue,thick},
                    deux/.style={thick,blue},
		    trois/.style={thick,dashed,red}]

\node[vertex,label=left:$s_1$] (s1) at (0,-0.5) {};
\node[vertex,label=left:$s_2$] (s2) at (0,0.5) {};
\node[vertex,label=above:$s_3$] (s3) at (1,0) {};
\node[vertex,label=above:$s_4$] (s4) at (2,0) {};
\node[vertex,label=right:$s_5$] (s5) at (3,0) {};

\draw[deux] (s1) -- (s3);
\draw[deux] (s2) -- (s3) -- (s4) -- (s5);

\end{tikzpicture} & \begin{tikzpicture}\node[align=left] at (0,0) {$111$ abelian vectors with coordinatewise \\[0.5em]
	     minimum $(2,2,4,3,2)$ and maximum $(6,6,9,7,5)$};\end{tikzpicture}
\end{tabular}}
\end{center}
\caption{Abelian vectors of the reduced words for the longest element in type~$D_4$ and $D_5$}
\label{tab:abelian_vectors_D}
\end{table}

\begin{table}[!ht]
\begin{center}
\begin{tabular}{c|l}
Type $H_n$ & Abelian vectors of $\wo$ \\\hline
\begin{tikzpicture}[vertex/.style={inner sep=1pt,circle,draw=black,fill=blue,thick},
                    deux/.style={thick,blue},
		    trois/.style={thick,dashed,red}]

\node[vertex,label=left:$s_1$] (s1) at (0,0) {};
\node[vertex,label=above:$s_2$] (s2) at (1,0) {};
\node[vertex,label=right:$s_3$] (s3) at (2,0) {};

\draw[deux] (s1) -- node[inner sep=0pt,midway,label=above:{\color{black} $5$}] {} (s2) -- (s3);

\end{tikzpicture} & \begin{tikzpicture}\node[align=left] at (0,0) {$\{(6, 6, 3), (5, 7, 3), (5, 6, 4), (5, 5, 5)\}$};\end{tikzpicture}
\end{tabular}
\end{center}
\caption{Abelian vectors of the reduced words for the longest element in type $H_3$}
\label{tab:abelian_vectors_H}
\end{table}


\begin{thebibliography}{BLVS{\etalchar{+}}99}

\bibitem[AZ14]{aigner_proofs_2014}
Martin Aigner and G\"unter~M. Ziegler, \emph{Proofs from {T}he {B}ook}, fifth
  ed., Springer-Verlag, Berlin, 2014.

\bibitem[Ait39]{aitken_determinants_1939}
Alexander~C. Aitken, \emph{Determinants and {M}atrices}, Oliver and Boyd,
  Edinburgh, 1939.

\bibitem[BC17]{bergeron_hopf_2017}
Nantel Bergeron and Cesar Ceballos, \emph{A {H}opf algebra of subword
  complexes}, Adv. Math. \textbf{305} (2017), 1163--1201.

\bibitem[BCL15]{bergeron_fan_2015}
Nantel Bergeron, Cesar Ceballos, and Jean-Philippe Labb\'e, \emph{Fan
  realizations of type {$A$} subword complexes and multi-associahedra of rank
  3}, Discrete Comput. Geom. \textbf{54} (2015), no.~1, 195--231.

\bibitem[BB05]{bjoerner_combinatorics_2005}
Anders Bj\"orner and Francesco Brenti, \emph{Combinatorics of {C}oxeter
  groups}, GTM, vol. 231, Springer, New York, 2005.

\bibitem[BLVS{\etalchar{+}}99]{bjoerner_oriented_1999}
Anders Bj\"{o}rner, Michel Las~Vergnas, Bernd Sturmfels, Neil White, and
  G\"{u}nter~M. Ziegler, \emph{Oriented matroids}, second ed., Encyclopedia of
  Mathematics and its Applications, vol.~46, Cambridge University Press,
  Cambridge, 1999.

\bibitem[BP09]{bokowski_symmetric_2009}
J\"urgen Bokowski and Vincent Pilaud, \emph{On symmetric realizations of the
  simplicial complex of 3-crossing- free sets of diagonals of the octagon},
  Proceedings of the 21$^{st}$ Canadian Conference on Computational Geometry
  (CCCG2009), 2009, pp.~41--44.

\bibitem[Bri16]{brinkmann_fvector_2016}
Philip Brinkmann, \emph{$f$-{V}ector s{p}aces of {P}olytopes, {S}pheres, and
  {E}ulerian {L}attices}, Ph.D. thesis, Freie Universit\"at Berlin, Berlin,
  2016, pp.~viii+132.

\bibitem[BZ18]{brinkmann_small_2016}
Philip Brinkmann and G\"unter~M. Ziegler, \emph{Small {\itshape f}-vectors of
  $3$-spheres and of $4$-polytopes}, Mathematics of Computation \textbf{87}
  (2018), 2955--2975.

\bibitem[Ceb12]{ceballos_associahedra_2012}
Cesar Ceballos, \emph{On associahedra and related topics}, Ph.D. thesis, Freie
  Universit\"at Berlin, Berlin, 2012, pp.~xi+87.

\bibitem[CLS14]{ceballos_subword_2014}
Cesar Ceballos, Jean-Philippe Labb\'e, and Christian Stump, \emph{Subword
  complexes, cluster complexes, and generalized multi-associahedra}, J.
  Algebraic Combin. \textbf{39} (2014), no.~1, 17--51.

\bibitem[CFZ02]{chapoton_polytopal_2002}
Fr\'{e}d\'{e}ric Chapoton, Sergey Fomin, and Andrei Zelevinsky, \emph{Polytopal
  realizations of generalized associahedra}, Canad. Math. Bull. \textbf{45}
  (2002), no.~4, 537--566.

\bibitem[DHM19]{davis_fibers_2019}
James~F. Davis, Patricia Hersh, and Ezra Miller, \emph{Fibers of maps to
  totally nonnegative spaces}, preprint,
  \href{http://arxiv.org/abs/1903.01420}{\tt arXiv:1903.01420} (March 2019),
  27~pp.

\bibitem[DLRS10]{de_loera_triangulations_2010}
Jes\'{u}s~A. De~Loera, J\"{o}rg Rambau, and Francisco Santos,
  \emph{Triangulations}, Algorithms and Computation in Mathematics, vol.~25,
  Springer-Verlag, Berlin, 2010.

\bibitem[Dey98]{dey_improved_1998}
Tamal~Krishna Dey, \emph{Improved bounds for planar {$k$}-sets and related
  problems}, Discrete Comput. Geom. \textbf{19} (1998), no.~3, Special Issue,
  373--382.

\bibitem[Die90]{diekert_combinatorics_1990}
Volker Diekert, \emph{Combinatorics on traces}, Lecture Notes in Computer
  Science, vol. 454, Springer-Verlag, Berlin, 1990.

\bibitem[Eln97]{elnitsky_rhombic_1997}
Serge Elnitsky, \emph{Rhombic tilings of polygons and classes of reduced words
  in {C}oxeter groups}, J. Combin. Theory Ser.~A \textbf{77} (1997), no.~2,
  193--221.

\bibitem[Epp18]{eppstein_forbidden_2018}
David Eppstein, \emph{Forbidden configurations in discrete geometry}, Cambridge
  University Press, Cambridge, 2018.

\bibitem[Esc16]{escobar_brick_2016}
Laura Escobar, \emph{Brick manifolds and toric varieties of brick polytopes},
  Electron. J. Combin. \textbf{23} (2016), no.~2, pp. 18.

\bibitem[EM18]{escobar_subword_2018}
Laura Escobar and Karola M\'{e}sz\'{a}ros, \emph{Subword complexes via
  triangulations of root polytopes}, Algebr. Comb. \textbf{1} (2018), no.~3,
  395--414.

\bibitem[Ewa96]{ewald_combinatorial_1996}
G\"{u}nter Ewald, \emph{Combinatorial convexity and algebraic geometry}, GTM,
  vol. 168, Springer-Verlag, New York, 1996.

\bibitem[FW00]{felsner_theorem_2000}
Stefan Felsner and Helmut Weil, \emph{A theorem on higher {B}ruhat orders},
  Discrete Comput. Geom. \textbf{23} (2000), no.~1, 121--127.

\bibitem[Fir17]{firsching_realizability_2017}
Moritz Firsching, \emph{Realizability and inscribability for simplicial
  polytopes via nonlinear optimization}, Math. Program. \textbf{166} (2017),
  no.~1-2, Ser. A, 273--295.

\bibitem[Fir20]{firsching_complete_2018}
\bysame, \emph{The complete enumeration of $4$-polytopes and $3$-spheres with
  nine vertices}, Israel J. Math. \textbf{240} (2020), no.~1, 417--441.

\bibitem[FMPT18]{fishel_enumerations_2018}
Susanna Fishel, Elizabeth Mili\'cevi\'c, Rebecca Patrias, and Bridget~Eileen
  Tenner, \emph{Enumerations relating braid and commutation classes}, European
  J. Combin. \textbf{74} (2018), 11--26.

\bibitem[GKZ94]{gelfand_discriminants_1994}
Israel~M. Gelfand, M.~Mikhail Kapranov, and Andrei~V. Zelevinsky,
  \emph{Discriminants, resultants, and multidimensional determinants},
  Mathematics: Theory \& Applications, Birkh\"{a}user Boston, Inc., Boston, MA,
  1994.

\bibitem[GOT18]{goodman_handbook_2018}
Jacob~E. Goodman, Joseph O'Rourke, and Csaba~D. T\'{o}th (eds.), \emph{Handbook
  of discrete and computational geometry}, Discrete Mathematics and its
  Applications (Boca Raton), CRC Press, Boca Raton, FL, 2018, Third edition.

\bibitem[GP17]{grinberg_proof_2017}
Darij Grinberg and Alexander Postnikov, \emph{Proof of a conjecture of
  {B}ergeron, {C}eballos and {L}abb\'e}, New York J. Math. \textbf{23} (2017),
  1581--1610.

\bibitem[Gr{\"{u}}03]{grunbaum_convex_2003}
Branko Gr{\"{u}}nbaum, \emph{Convex polytopes}, second ed., GTM, vol. 221,
  Springer-Verlag, New York, 2003.

\bibitem[HR01]{hohlweg_inverses_2001}
Christophe Hohlweg and Christophe Reutenauer, \emph{Inverses of words and the
  parabolic structure of the symmetric group}, European J. Combin. \textbf{22}
  (2001), no.~8, 1075--1082.

\bibitem[Hum90]{humphreys_reflection_1990}
James~E. Humphreys, \emph{Reflection groups and {C}oxeter groups}, Cambridge
  Studies in Advanced Mathematics, vol.~29, Cambridge University Press,
  Cambridge, 1990.

\bibitem[Jon03a]{jonsson_abstract_2003}
Jakob Jonsson, \emph{Generalized triangulations of the $n$-gon}, Lecture Notes
  in Mathematics, vol. 132, p.~281, Mathematisches Forschungsinstitut
  Oberwolfach, 05.01.-24-05.2003.

\bibitem[Jon03b]{jonsson_generalized_2003}
\bysame, \emph{Generalized triangulations of the {$n$}-gon}, p.~11,
  Mathematisches Forschungsinstitut Oberwolfach, 2003.

\bibitem[Kal88]{kalai_many_1988}
Gil Kalai, \emph{Many triangulated spheres}, Discrete Comput. Geom. \textbf{3}
  (1988), no.~1, 1--14.

\bibitem[KM04]{knutson_subword_2004}
Allen Knutson and Ezra Miller, \emph{Subword complexes in {C}oxeter groups},
  Adv. Math. \textbf{184} (2004), no.~1, 161--176.

\bibitem[KM05]{knutson_groebner_2005}
\bysame, \emph{Gr\"obner geometry of {S}chubert polynomials}, Ann. of Math. (2)
  \textbf{161} (2005), no.~3, 1245--1318.

\bibitem[Lod04]{loday_realization_2004}
Jean-Louis Loday, \emph{Realization of the {S}tasheff polytope}, Arch. Math.
  (Basel) \textbf{83} (2004), no.~3, 267--278.

\bibitem[Lot97]{lothaire_combinatorics_1997}
M.~Lothaire, \emph{Combinatorics on words}, Cambridge Mathematical Library,
  Cambridge University Press, Cambridge, 1997.

\bibitem[Mac15]{macdonald_symmetric_2015}
Ian~G. Macdonald, \emph{Symmetric functions and {H}all polynomials}, second
  ed., Oxford Classic Texts in the Physical Sciences, The Clarendon Press,
  Oxford University Press, New York, 2015.

\bibitem[MS89]{manin_arrangements_1989}
Yurii~I. Manin and Vadim~V. Schechtman, \emph{Arrangements of hyperplanes,
  higher braid groups and higher {B}ruhat orders}, Algebraic number theory,
  Adv. Stud. Pure Math., vol.~17, Academic Press, Boston, MA, 1989,
  pp.~289--308.

\bibitem[Man18]{manneville_fan_2018}
Thibault Manneville, \emph{Fan realizations for some 2-associahedra}, Exp.
  Math. \textbf{27} (2018), no.~4, 377--394.

\bibitem[Mat02]{matousek_lectures_2002}
Ji{\v r}{\'\i} Matou{\v s}ek, \emph{Lectures on discrete geometry}, GTM, vol.
  212, Springer-Verlag, New York, 2002.

\bibitem[Mat64]{matsumoto_generateurs_1964}
Hideya Matsumoto, \emph{G\'en\'erateurs et relations des groupes de {W}eyl
  g\'en\'eralis\'es}, C. R. Acad. Sci. Paris \textbf{258} (1964), 3419--3422.

\bibitem[Mn{\"{e}}88]{mnev_universality_1988}
Nikolai~E. Mn{\"{e}}v, \emph{The universality theorems on the classification
  problem of configuration varieties and convex polytopes varieties}, Topology
  and geometry---{R}ohlin {S}eminar, Lecture Notes in Math., vol. 1346,
  Springer, Berlin, 1988, pp.~527--543.

\bibitem[Mui60]{muir_treatise_1960}
Thomas Muir, \emph{A treatise on the theory of determinants}, Revised and
  enlarged by William H. Metzler, Dover Publications, Inc., New York, 1960.

\bibitem[MHPS12]{tamari_2012}
Folkert M\"{u}ller-Hoissen, Jean~Marcel Pallo, and Jim Stasheff (eds.),
  \emph{Associahedra, {T}amari lattices and related structures}, Progress in
  Mathematical Physics, vol. 299, Birkh\"{a}user/Springer, Basel, 2012, Tamari
  memorial Festschrift.

\bibitem[NSW16]{nevo_many_2016}
Eran Nevo, Francisco Santos, and Stedman Wilson, \emph{Many triangulated
  odd-dimensional spheres}, Math. Ann. \textbf{364} (2016), no.~3-4, 737--762.

\bibitem[Niv08]{nivasch_improved_2008}
Gabriel Nivasch, \emph{An improved, simple construction of many halving edges},
  Surveys on discrete and computational geometry, Contemp. Math., vol. 453,
  Amer. Math. Soc., Providence, RI, 2008, pp.~299--305.

\bibitem[PZ04]{pfeifle_many_2004}
Julian Pfeifle and G\"{u}nter~M. Ziegler, \emph{Many triangulated 3-spheres},
  Math. Ann. \textbf{330} (2004), no.~4, 829--837.

\bibitem[PP12]{pilaud_multitriangulations_2012}
Vincent Pilaud and Michel Pocchiola, \emph{Multitriangulations,
  pseudotriangulations and primitive sorting networks}, Discrete Comput. Geom.
  \textbf{48} (2012), no.~1, 142--191.

\bibitem[RU81]{raiha_shortest_1981}
Kari-Jouko R\"{a}ih\"{a} and Esko Ukkonen, \emph{The shortest common
  supersequence problem over binary alphabet is {NP}-complete}, Theoret.
  Comput. Sci. \textbf{16} (1981), no.~2, 187--198.

\bibitem[RS13]{shafarevich_linear_2013}
Alexey~O. Remizov and Igor~R. Shafarevich, \emph{Linear algebra and geometry},
  Springer, Heidelberg, 2013.

\bibitem[RG96]{richtergebert_realization_1996}
J\"{u}rgen Richter-Gebert, \emph{{Realization Spaces of Polytopes}}, Lecture
  Notes in Mathematics, vol. 1643, Springer-Verlag, Berlin, 1996.

\bibitem[RGZ95]{richtergebert_realization_1995}
J\"{u}rgen Richter-Gebert and G\"{u}nter~M. Ziegler, \emph{Realization spaces
  of {$4$}-polytopes are universal}, Bull. Amer. Math. Soc. (N.S.) \textbf{32}
  (1995), no.~4, 403--412.

\bibitem[Sag01]{sagan_symmetric_2001}
Bruce~E. Sagan, \emph{The symmetric group}, second ed., GTM, vol. 203,
  Springer-Verlag, New York, 2001.

\bibitem[Sch61]{schensted_longest_1961}
Craige~E. Schensted, \emph{Longest increasing and decreasing subsequences},
  Canad. J. Math. \textbf{13} (1961), 179--191.

\bibitem[SSV97]{shapiro_connected_1997}
Boris~Z. Shapiro, Michael~Z. Shapiro, and Alek Vainshtein, \emph{Connected
  components in the intersection of two open opposite {S}chubert cells in
  {${\rm SL}_n({\bf R})/B$}}, Internat. Math. Res. Notices \textbf{1997}
  (1997), no.~10, 469--493.

\bibitem[Sta99]{stanley_enumerative_1999}
Richard~P. Stanley, \emph{Enumerative combinatorics. {V}ol. 2}, Cambridge
  Studies in Advanced Mathematics, vol.~62, Cambridge University Press,
  Cambridge, 1999.

\bibitem[Sta63]{stasheff_homotopy_1963}
James~Dillon Stasheff, \emph{Homotopy associativity of {$H$}-spaces. {I},
  {II}}, Trans. Amer. Math. Soc. 108 (1963), 275-292; ibid. \textbf{108}
  (1963), 293--312.

\bibitem[SR76]{steinitz_vorlesungen_1976}
Ernst Steinitz and Hans Rademacher, \emph{Vorlesungen \"{u}ber die {T}heorie
  der {P}olyeder unter {E}inschluss der {E}lemente der {T}opologie},
  Springer-Verlag, Berlin-New York, 1976, Reprint der 1934 Auflage, Grundlehren
  der Mathematischen Wissenschaften, No. 41.

\bibitem[Stu87]{sturmfels_boundary_1987}
Bernd Sturmfels, \emph{Boundary complexes of convex polytopes cannot be
  characterized locally}, J. London Math. Soc. (2) \textbf{35} (1987), no.~2,
  314--326.

\bibitem[Sage]{sagemath}
{The Sage Developers}, \emph{{S}agemath, the {S}age {M}athematics {S}oftware
  {S}ystem ({V}ersion 9.0)}, 2020, {\tt https://www.sagemath.org}.

\bibitem[Tit69]{tits_probleme_1969}
Jacques Tits, \emph{Le probl\`eme des mots dans les groupes de {C}oxeter},
  Symposia {M}athematica ({INDAM}, {R}ome, 1967/68), {V}ol. 1, Academic Press,
  London, 1969, pp.~175--185.

\bibitem[Zie93]{ziegler_higher_1993}
G\"unter~M. Ziegler, \emph{Higher {B}ruhat orders and cyclic hyperplane
  arrangements}, Topology \textbf{32} (1993), no.~2, 259--279.

\bibitem[Zie95]{ziegler_lectures_1995}
\bysame, \emph{Lectures on polytopes}, GTM, vol. 152, Springer-Verlag, New
  York, 1995.

\end{thebibliography}
\end{document}